\newtheorem{theorem}{Theorem}[section]
\newtheorem{definition}[theorem]{Definition}
\newtheorem{proposition}[theorem]{Proposition}
\newtheorem{corollary}[theorem]{Corollary}
\newtheorem{lemma}[theorem]{Lemma}
\newtheorem{remark}[theorem]{Remark}
\newtheorem{example}[theorem]{Example}
\newtheorem{examples}[theorem]{Examples}
\newtheorem{conjecture}[theorem]{Conjecture}
\newtheorem{open question}[theorem]{Open Question}
\newtheorem{c/p}[theorem]{Conjecture/Proposition}
\def\vint{\mathop{\mathchoice%
 {\setbox0\hbox{$\displaystyle\intop$}\kern 0.22\wd0%
 \vcenter{\hrule width 0.6\wd0}\kern -0.82\wd0}%
 {\setbox0\hbox{$\textstyle\intop$}\kern 0.2\wd0%
 \vcenter{\hrule width 0.6\wd0}\kern -0.8\wd0}%
 {\setbox0\hbox{$\scriptstyle\intop$}\kern 0.2\wd0%
 \vcenter{\hrule width 0.6\wd0}\kern -0.8\wd0}%
 {\setbox0\hbox{$\scriptscriptstyle\intop$}\kern 0.2\wd0%
 \vcenter{\hrule width 0.6\wd0}\kern -0.8\wd0}}%
 \mathopen{}\int}
\newcommand{\R}{\mathbb R}
\title{Gagliardo-Nirenberg, Trudinger-Moser and Morrey inequalities  on Dirichlet spaces}
\author{Patricia Alonso Ruiz\footnote{P.A.R. was partly supported by the NSF grant DMS~1951577.}, Fabrice Baudoin\footnote{F.B. was partly supported by the NSF grant DMS~1901315.}}
\date{\today}
\begin{document}

\maketitle

\begin{abstract}
With a view towards Riemannian or sub-Riemannian manifolds, RCD metric spaces and specially fractals, 
this paper makes a step further in the development of
a 
theory of heat semigroup based $(1,p)$ Sobolev spaces 
in the general framework of Dirichlet spaces. 
Under suitable assumptions that are verified in a variety of settings, 
the tools developed by D. Bakry, T. Coulhon, M. Ledoux and L. Saloff-Coste in the paper~\textit{Sobolev inequalities in disguise} 
allow us to obtain
the whole family of Gagliardo-Nirenberg and Trudinger-Moser inequalities with optimal exponents. 
The latter 
depend not only on the Hausdorff and walk dimensions of the space but also on other invariants. 
In addition, we prove Morrey type inequalities and apply them to study the infimum of the exponents that ensure continuity of Sobolev functions. The results are illustrated for fractals using the Vicsek set, whereas several conjectures are made for nested fractals and the Sierpinski carpet.
\end{abstract}

\tableofcontents

\section{Introduction}


The theory of Sobolev spaces was first pushed forward in order to prove solvability of certain partial differential equations, see for example~\cite{Maz11}. When $X$ is a Riemannian manifold, a function 
$f\in L^p(X)$ is said to be in the Sobolev space $W^{1,p}(X)$ if its distributional gradient is given by a 
vector-valued function $\nabla f\in L^p(X:\R^n)$. 
In more general spaces, a distributional theory of derivatives relying on integration by parts may not be available, which makes necessary to find an alternative notion of derivative.

\medskip

After the seminal paper of J. Cheeger \cite{Chee99}, many authors introduced 
in different ways 
a variety of notions of a gradient in the general context of metric measure spaces; 
we refer for instance to 
the book by J. Heinonen~\cite{Hei01} and the references therein. Those gradients naturally yield a rich theory of first order Sobolev spaces that was developed around stepstone works like the ones by N. Shanmugalingam~\cite{Sha00}; see also the book~\cite{HKST15} and the more recent papers by L. Ambrosio, M. Colombo and S. Di Marino~\cite{ACD15}, and G. Savar\'e~\cite{Sav19}.

\medskip

The approach to Sobolev spaces undertaken in the above cited references crucially relies on a notion of a measure-theoretic gradient that requires the underlying space to admit enough ``good'' rectifiable curves, a property that may not be present in some singular, fractal-like, metric measure spaces. With the aim of including these, potential-theoretic based definitions have been introduced and studied at different levels of generality, 
see e.g.~\cite{Str03,HZ03,PP10} and references therein.
The present paper is set up in the framework of Dirichlet spaces that are general enough to also cover this type of fractals.

\medskip

Dirichlet spaces are measure spaces equipped with a closed Markovian symmetric bilinear form $\mathcal{E}$, called Dirichlet form, whose domain is dense in $L^2$. Dirichlet spaces provide a unified framework to study doubling metric measure spaces supporting a 2-Poincar\'e inequality~\cite{KST04}, fractals~\cite{Kig01}, infinite-dimensional spaces~\cite{BH91} and non-local operators~\cite{CF12}. An important tool available in any Dirichlet space is the heat semigroup. 
The latter is a priori
an $L^2$ object, meaning that it is originally defined on $L^2$ 
by means of the Dirichlet form $\mathcal{E}$ itself 
using spectral theory of Hilbert spaces. However, the Markovian property of 
$\mathcal{E}$ 
and classical interpolation theory allow to define this semigroup as a family of operators acting on any $L^p$ space, $1 \le p \le +\infty$.

\medskip

The latter extension was used in~\cite{ABCRST1} to develop 
a theory of $L^p$ Besov type spaces that have systematically been studied in the context of strictly local spaces~\cite{ABCRST2}, strongly local spaces with sub-Gaussian heat kernel estimates~\cite{ABCRST3} and non-local spaces~\cite{ABCRST4}. While the papers~\cite{ABCRST2,ABCRST3} primarily dealt with the $L^1$ theory and the associated theory of bounded variation (BV) functions and sets of finite perimeter, the present paper focuses on the $L^p$ theory for $p>1$. 
The Sobolev spaces considered here arise as $L^p$ Besov spaces at the critical exponent, c.f. Definition~\ref{D:pSobolev_class}, and
coincide with their classical counterpart in the Riemannian and other often studied metric measure settings, see Section~\ref{sec: examples}. This heat semigroup approach digresses from existing generalizations of the classical ideas of Mazy'a~\cite{Maz11} to fractals, see e.g.~\cite{HRT13,HKM18}. 

\medskip

Once Sobolev spaces have been identified, it is natural to investigate analogues of the famous Gagliardo-Nirenberg and Trudinger-Moser inequalities. Such inequalities classically play an important role in the study of partial differential equations and include as special cases the Sobolev embedding inequality, the Nash inequality and the Ladyzhenskaya's inequality to name but a few. Besides their applications to partial differential equations, Gagliardo-Nirenberg and Trudinger-Moser inequalities also carry geometric information and, in the context of Riemannian geometry, they have for instance been applied to the study of sets of finite perimeter, conformal geometry~\cite{CY03} and cohomology~\cite{Pan08}. In the context of metric measure spaces, they have been closely related to the study of quasi-conformal or quasi-symmetric maps and invariants, see~\cite{HK98}.

\medskip

The paper is organized as follows: Section~\ref{sec2} introduces the Sobolev spaces $W^{1,p}(\mathcal E)$, $p\geq 1$, associated with a general Dirichlet form $\mathcal E$. These are characterized in Section~\ref{sec: examples} for various specific classes of examples. In strictly local Dirichlet spaces, which admit a canonical gradient structure intrinsically associated to the form, it is showed in Theorem~\ref{T:BV2} that, under suitable conditions, $W^{1,p}(\mathcal E)$ coincides with the Sobolev space defined by that gradient structure. In the case of strongly local Dirichlet spaces, which includes many fractals, $W^{1,p}(\mathcal E)$ is characterized in Theorem~\ref{T:BV3} as a Korevaar-Schoen space.
Section~\ref{section GN-TM} is devoted to the study of Gagliardo-Nirenberg and Trudinger-Moser inequalities in general Dirichlet spaces, c.f.\ Theorem~\ref{T:Gagliardo-Nirenberg} and Corollary~\ref{C:Trudinger-Moser_1}. The techniques rely on the general methods proposed by D. Bakry, T. Coulhon, M. Ledoux and L. Saloff-Coste in the paper~\cite{BCLS95};
besides the ultracontractivity of the semigroup, the main assumption is an $L^p$ pseudo-Poincar\'e inequality that is related to a weak notion of curvature (in the Bakry-\'Emery sense) of the underlying space. The latter is shown to be satisfied in large classes of examples like RCD spaces or nested fractals. 
Finally, Section~\ref{morrey} investigates embedding of the Sobolev spaces into spaces of H\"older functions. Of particular interest is the infimum $\delta_\mathcal{E}$ of the exponents for which such embedding occurs. In strictly local spaces and under suitable assumptions it is possible to bound above this quantity by the Hausdorff dimension of the space, c.f. Theorem~\ref{T:continuity_BV2}. In the case of fractals, Theorem~\ref{T:Vicsek continuity} shows that for the Vicsek set $\delta_{\mathcal{E}}=1$. Moreover, it is conjectured that for the Sierpinski gasket also $\delta_{\mathcal{E}}=1$, whereas for the Sierpinski carpet
\[
\delta_\mathcal{E}=1+\frac{\log 2}{d_W\log 3-2\log 2},
\]
where $d_W\approx 2.097$ is the so-called walk dimension of the carpet.


\subsection*{Notations} 

If $\Lambda_1$ and $\Lambda_2$ are functionals defined on a class of functions $f \in \mathcal C$, the notation
\[
\Lambda_1 (f) \simeq \Lambda_2 (f)
\]
means that there exist constants $c,C >0$ such that for every $f \in \mathcal C$
\[
c\Lambda_1 (f) \le \Lambda_2 (f)\le C \Lambda_1 (f).
\]
Also, in proofs, $c,C$ will generically denote positive constants whose values may change from one line to another.

\section{Framework, basic definitions and preliminaries}\label{sec2}


Throughout the paper, $X$ will denote a good measurable space (like a Polish or Radon space) equipped with a $\sigma$-finite measure $\mu$ supported on $X$. In addition, the pair $(\mathcal{E},\mathcal{F})$, where $\mathcal{F}={\rm dom}\,\mathcal{E}$, will denote a Dirichlet form on $L^2(X,\mu)$. We refer to $(X,\mu,\mathcal{E},\mathcal{F})$ as a Dirichlet space. Its associated heat semigroup $\{P_{t}\}_{t\geq 0}$ is always assumed to be conservative, i.e. $P_t1=1$. Further details about this setting can be found in~\cite{ABCRST1}.

\subsection{Heat semigroup-based BV, Sobolev and Besov classes}

Following~\cite{ABCRST1}, we define the (heat semigroup-based) Besov classes associated with a Dirichlet space$(X,\mu,\mathcal{E},\mathcal{F})$.

\begin{definition}
For any $p \ge 1$ and $\alpha \ge 0$, define
\begin{equation*}
\mathbf{B}^{p,\alpha}(X):=\left\{ f \in L^p(X,\mu)\, :\,  \limsup_{t  \to 0^+} t^{-\alpha} \left( \int_X P_t (|f-f(y)|^p)(y) d\mu(y) \right)^{1/p}<+\infty \right\}.
\end{equation*}
\end{definition}

The basic properties of the space $\mathbf{B}^{p,\alpha}(X)$ endowed with the semi-norm
\[
\| f \|_{p,\alpha}= \sup_{t >0} t^{-\alpha} \left( \int_X P_t (|f-f(y)|^p)(y) d\mu(y) \right)^{1/p}
\]
are studied in~\cite{ABCRST1}. 
In the present paper, we shall also be interested in the localized semi-norms defined for $R>0$ as
\[
\| f \|_{p,\alpha,R}:= \sup_{t  \in (0,R)} t^{-\alpha} \left( \int_X P_t (|f-f(y)|^p)(y) d\mu(y) \right)^{1/p}.
\]
Note that, in view of~\cite[Lemma 4.1]{ABCRST1}, one has for every $R>0$
\[
\| f \|_{p,\alpha,R} \le \| f \|_{p,\alpha} \le \frac{2}{R^\alpha} \| f \|_{L^p(X,\mu)} +\| f \|_{p,\alpha,R}
\]
and in particular all the norms $\| f \|_{L^p(X,\mu)} +\| f \|_{p,\alpha,R}$ are equivalent on $\mathbf{B}^{p,\alpha}(X)$ to the norm $\| f \|_{L^p(X,\mu)} +\| f \|_{p,\alpha}$. 

\medskip

The BV and Sobolev classes arise at the corresponding critical exponents as follows.

\begin{definition}
The class of heat semigroup based bounded variation (BV) functions is defined as 
\[
BV(\mathcal{E}):=\mathbf{B}^{1,\alpha_1}(X),
\]
where
\begin{equation*}
\alpha_1=\sup\{\alpha>0\,\colon\,\mathbf{B}^{1,\alpha}(X)\text{ contains non a.e. constant functions}\}.
\end{equation*}
For any $f \in BV(\mathcal{E})$, its total variation is defined as
\[
\mathbf{Var}_\mathcal{E} (f):=\liminf_{t  \to 0^+} t^{-\alpha_1}  \int_X P_t (|f-f(y)|)(y) d\mu(y).
\]
\end{definition}
As in the classical theory, the Sobolev classes are defined analogously for $p>1$.
\begin{definition}\label{D:pSobolev_class}
Let $p>1$. The $(1,p)$ heat semigroup based Sobolev class is defined as
\[
W^{1,p}(\mathcal{E}):=\mathbf{B}^{p,\alpha_p}(X),
\]
where
\begin{equation*}
\alpha_p:=\sup\{\alpha>0\,\colon\,\mathbf{B}^{p,\alpha}(X)\text{ contains non a.e. constant functions}\}.
\end{equation*}
For any $f \in W^{1,p}(\mathcal{E})$, its total $p$-variation is defined as
\[
\mathbf{Var}_{p,\mathcal{E}} (f):=\liminf_{t  \to 0^+} t^{-\alpha_p}  \left( \int_X P_t (|f-f(y)|^p)(y) d\mu(y) \right)^{1/p}.
\]
\end{definition}

\begin{remark}
For consistency in the notation, we will write $\mathbf{Var}_{1,\mathcal{E}} (f):=\mathbf{Var}_{\mathcal{E}} (f)$ for $f \in BV(\mathcal{E})$.
\end{remark}

\begin{remark}
From in~\cite[Proposition 4.6]{ABCRST1}, one has $\alpha_2=\frac{1}{2}$, $ W^{1,2}(\mathcal{E})=\mathbf{dom}\,\mathcal{E}=\mathcal{F}$ and $\mathbf{Var}_{2,\mathcal{E}} (f)=2\mathcal{E}(f,f)$.
\end{remark}

The following lemma shows that the functionals $\mathbf{Var}_{p,\mathcal{E}} (f)$ behave nicely with respect to cut-off arguments. This is a crucial property that will allow us to use the techniques developed by D. Bakry, T. Coulhon, M. Ledoux and L. Saloff-Coste in~\cite{BCLS95}.

\begin{lemma}\label{L:local_norm_approx:a}
For any nonnegative $f \in W^{1,p}(\mathcal E)$ if $p>1$, or $f\in BV(\mathcal E)$ if $p=1$, and any $\rho>0$, it holds that
\[
\left( \sum_{k \in \mathbb{Z}} \mathbf{Var}_{p,\mathcal{E}} (f_{\rho,k})^p \right)^{1/p} \le 2(p+1) \mathbf{Var}_{p,\mathcal{E}} (f),
\]
where $f_{\rho,k}:=(f-\rho^k)_+ \wedge \rho^k(\rho-1)$, $k \in \mathbb{Z}$. 
\end{lemma}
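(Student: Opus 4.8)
The inequality is vacuous unless $\rho>1$: if $\rho\le 1$ then $\rho^k(\rho-1)\le 0$, so $f_{\rho,k}=(f-\rho^k)_+\wedge\rho^k(\rho-1)$ is the minimum of a nonnegative and a nonpositive quantity, hence equals the constant $\rho^k(\rho-1)$ and $\mathbf{Var}_{p,\mathcal{E}}(f_{\rho,k})=0$ for every $k$. So assume $\rho>1$. The heart of the matter is then a purely deterministic pointwise estimate. Writing $\phi_k(s):=(s-\rho^k)_+\wedge\rho^k(\rho-1)$ for $s\ge0$, so that $f_{\rho,k}=\phi_k\circ f$, each $\phi_k$ is nondecreasing, $1$-Lipschitz and satisfies $\phi_k(0)=0$. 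The plan is first to prove the telescoping identity $\sum_{k\in\mathbb{Z}}\phi_k(s)=s$ for all $s\ge0$: this is immediate for $s=0$, and for $s>0$ one chooses $m$ with $\rho^m\le s<\rho^{m+1}$, observes that $\phi_k(s)=0$ for $k\ge m+1$, $\phi_m(s)=s-\rho^m$, and $\phi_k(s)=\rho^k(\rho-1)$ for $k\le m-1$, then sums the geometric series $\sum_{k\le m-1}\rho^k(\rho-1)=\rho^m$. Since each $\phi_k$ is nondecreasing, for $a,b\ge0$ the increments $\phi_k(b)-\phi_k(a)$ all have the same sign and telescope, so $\sum_k|\phi_k(b)-\phi_k(a)|=|b-a|$; combining with the elementary bound $\sum_k c_k^p\le\big(\sum_k c_k\big)^p$ for nonnegative $c_k$ and $p\ge1$ yields, for all $x,y\in X$,
\[
\sum_{k\in\mathbb{Z}}|f_{\rho,k}(x)-f_{\rho,k}(y)|^p\le|f(x)-f(y)|^p .
\]

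Next I would transfer this through the heat semigroup. First note each $f_{\rho,k}$ is an admissible competitor: from $|f_{\rho,k}(x)-f_{\rho,k}(y)|\le|f(x)-f(y)|$ and $0\le f_{\rho,k}\le f$, post-composition with $\phi_k$ increases neither the $L^p$ norm nor any seminorm $\|\cdot\|_{p,\alpha}$, so $f_{\rho,k}\in\mathbf{B}^{p,\alpha_p}(X)=W^{1,p}(\mathcal E)$ and $\mathbf{Var}_{p,\mathcal{E}}(f_{\rho,k})$ is finite. Abbreviating $N_p(g,t):=\int_X P_t(|g-g(y)|^p)(y)\,d\mu(y)$, so that $\mathbf{Var}_{p,\mathcal{E}}(g)^p=\liminf_{t\to0^+}t^{-p\alpha_p}N_p(g,t)$, fix a small $t>0$ (for which $N_p(f,t)<\infty$ since $f\in\mathbf{B}^{p,\alpha_p}(X)$). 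For each $y$ the partial sums $\sum_{|k|\le n}|f_{\rho,k}(\cdot)-f_{\rho,k}(y)|^p$ increase with $n$ and are dominated by $|f(\cdot)-f(y)|^p$ by the pointwise estimate; applying the positive linear operator $P_t$ in the first variable, evaluating at $y$, integrating in $y$, and using Tonelli together with monotone convergence gives
\[
\sum_{k\in\mathbb{Z}}N_p(f_{\rho,k},t)=\int_X P_t\Big(\sum_{k\in\mathbb{Z}}|f_{\rho,k}(\cdot)-f_{\rho,k}(y)|^p\Big)(y)\,d\mu(y)\le N_p(f,t).
\]

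Finally I would pass to the limit. Multiplying by $t^{-p\alpha_p}$, taking $\liminf_{t\to0^+}$, and using the Fatou-type inequality $\sum_k\liminf_t a_k(t)\le\liminf_t\sum_k a_k(t)$ for nonnegative $a_k(t)$,
\[
\sum_{k\in\mathbb{Z}}\mathbf{Var}_{p,\mathcal{E}}(f_{\rho,k})^p=\sum_{k\in\mathbb{Z}}\liminf_{t\to0^+}t^{-p\alpha_p}N_p(f_{\rho,k},t)\le\liminf_{t\to0^+}t^{-p\alpha_p}N_p(f,t)=\mathbf{Var}_{p,\mathcal{E}}(f)^p,
\]
and taking $p$-th roots gives the conclusion — in fact this route delivers the stronger bound with constant $1$, so the stated factor $2(p+1)$ is far from sharp. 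The case $p=1$, $f\in BV(\mathcal E)$, is identical with $\alpha_1$ in place of $\alpha_p$.

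\emph{Main difficulty.} Everything after Step 1 is routine bookkeeping; the only genuinely delicate point is the interchange of $P_t$ with the infinite sum, which I would carry out at the level of finite partial sums (where it is merely linearity plus positivity of $P_t$) and only pass to $n\to\infty$ after integrating in $y$, so that no heat-kernel representation is invoked. If one prefers to stay closer to the argument of~\cite{BCLS95} one may instead split $f$ layer by layer and control overlaps directly, at the cost of the weaker constant $2(p+1)$; but the telescoping identity above seems to me the cleanest path.
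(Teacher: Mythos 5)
Your proof is correct, and it takes a genuinely different route from the paper's. The paper follows \cite[Lemma 7.1]{BCLS95}: it splits the double integral according to the level sets $B_k=\{\rho^k<f\le\rho^{k+1}\}$, uses the contraction property of $f\mapsto f_{\rho,k}$ on $B_k\times X$ and $B_k^c\times B_k$, and controls the remaining piece $B_k^c\times B_k^c$ through the overlap sets $Z_k$ via the counting bound $\sum_k\mathbf{1}_{Z_k}(x,y)\rho^{kp}(\rho-1)^p\le p|f(x)-f(y)|^p$; adding the three contributions is exactly what produces the constant $2(p+1)$ before the final liminf/superadditivity step (which you share with the paper). You instead exploit the specific structure of the truncations: the telescoping identity $\sum_k\phi_k(s)=s$ together with the monotonicity of each $\phi_k$ gives the pointwise estimate $\sum_k|f_{\rho,k}(x)-f_{\rho,k}(y)|^p\le|f(x)-f(y)|^p$ with constant $1$, and the rest is integration against the kernel (finite partial sums plus positivity of $P_t$ suffice, as you note) and the same Fatou-type superadditivity. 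Your verification of the identity and of $\sum_k c_k^p\le\bigl(\sum_k c_k\bigr)^p$ is sound, your remark that the statement is trivial for $\rho\le 1$ is correct (the intended use is $\rho>1$), and the membership $f_{\rho,k}\in W^{1,p}(\mathcal E)$ follows from the contraction as you say. What each approach buys: yours is sharper (constant $1$ instead of $2(p+1)$, which is all that is needed to verify the condition $(H_p)$ of \cite{BCLS95}) and elementary for this particular double-integral functional; the paper's argument is the BCLS one, which applies to general functionals satisfying only truncation-type properties and keeps the same explicit constant as in that reference, at the cost of the heavier set decomposition.
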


\begin{proof}
Let $p_t(y,dx)$ denote the heat kernel measure of the semigroup $P_t$, which exists because $(X,\mu)$ is assumed to be a good measurable space, c.f.~\cite[Theorem 1.2.3]{BGL14}). We first observe that, once we prove 
\begin{equation}\label{E:L:local_norm_approx:a_h1}
\sum_{k \in \mathbb{Z}} \int_X \int_X |f_{\rho,k} (x)-f_{\rho,k}(y)|^p p_t(y,dx)d\mu(y) \le 2(p+1) \int_X \int_X |f (x)-f(y)|^p p_t(y,dx)d\mu(y)
\end{equation}
for any $\rho>0$, then
\begin{multline*}
\liminf_{t \to 0^+} \left(\sum_{k \in \mathbb{Z}}  t^{-p\alpha_p } \!\!\!\int_X \int_X |f_{\rho,k} (x)-f_{\rho,k}(y)|^p p_t(y,dx)d\mu(y) \right) \\
\le  2(p+1)\liminf_{t \to 0^+} t^{-p\alpha_p } \!\!\!\int_X \int_X |f (x)-f(y)|^p p_t(y,dx)d\mu(y).
\end{multline*}
Using the superadditivity of the $\liminf$ one concludes
\begin{multline*}
\sum_{k \in \mathbb{Z}}  \liminf_{t \to 0^+} t^{-p\alpha_p } \int_X \int_X |f_{\rho,k} (x)-f_{\rho,k}(y)|^p p_t(y,dx)d\mu(y) \\
\le 2(p+1) \liminf_{t \to 0^+} t^{-p\alpha_p } \int_X \int_X |f (x)-f(y)|^p p_t(y,dx)d\mu(y).
\end{multline*}
The inequality~\eqref{E:L:local_norm_approx:a_h1} can implicitly be found in the proof of~\cite[Lemma 7.1]{BCLS95} with $a=p$. We include here the details to provide the explicit constant. For each $k\in\mathbb{Z}$, set $f_k:=f_{\rho,k}$ and define $B_k=\{x\in X~\,\colon\,\rho^k<f\leq \rho^{k+1}\}$. In this way, the external integral on the left hand side of~\eqref{E:L:local_norm_approx:a_h1} is decomposed it into an integral over $B_k$ and $B_k^c$. For the integrals over $B_k$, since the mapping $f\mapsto f_{k}$ is a contraction, 
it follows that
\begin{equation}\label{E:L:local_norm_approx:a_h2}
\sum_{k\in\mathbb{Z}}\int_{B_k} \int_X |f_{k} (x)-f_{k}(y)|^p p_t(y,dx)d\mu(y)\leq 
\int_X \int_X |f (x)-f(y)|^p p_t(y,dx)d\mu(y).
\end{equation}
To perform the integrals over $B_k^{c}$, we decompose them as
\begin{multline*}
\sum_{k\in\mathbb{Z}}\int_{B_k^c} \int_{B_k} |f_{k} (x)-f_{k}(y)|^p p_t(y,dx)d\mu(y)
+\sum_{k\in\mathbb{Z}}\int_{B^c_k} \int_{B_k^c}|f_{k} (x)-f_{k}(y)|^p p_t(y,dx)d\mu(y)\\
=:\sum_{k\in\mathbb{Z}}J_1(k)+\sum_{k\in\mathbb{Z}}J_2(k).
\end{multline*}
Again, the contraction property of $f\mapsto f_{k}$ yields
\begin{multline*}
\sum_{k\in\mathbb{Z}}J_1(k)\leq \sum_{k\in\mathbb{Z}} \int_X \int_{B_k}|f_{k} (x)-f_{k}(y)|^p p_t(y,dx)d\mu(y)\\
\leq\int_X \sum_{k\in\mathbb{Z}}  \int_{B_k}|f_{k} (x)-f_{k}(y)|^p p_t(y,dx)d\mu(y)\leq \int_X \int_X |f (x)-f(y)|^p p_t(y,dx)d\mu(y).
\end{multline*}
On the other hand, notice that for any $(x,y)\in B_k^c\times B_k^c$ we have $|f_k(x)-f_k(y)|\neq 0$ only if
\[
(x,y)\in \{f(x)\leq\rho^k<f(y)\rho^{-1}\}\cup\{f(y)\leq\rho^k<f(x)\rho^{-1}\}=:Z_k\cup Z_k^*.
\]
Also, $|f_k(x)-f_k(y)|=\rho^k(\rho-1)$ for $(x,y)\in Z_k\cup Z_k^*$. Thus,
\begin{align*}
\sum_{k\in\mathbb{Z}}J_2(k)&\leq =\sum_{k\in \mathbb{Z}}\int_X\int_X\big(\mathbf{1}_{Z_k}(x,y)+\mathbf{1}_{Z^*_k}(x,y)\big)|f_k(x)-f_k(y)|^p p_t(y,dx)d\mu(y)\\
&=\int_X\int_X\sum_{k\in\mathbb{Z}}\big(\mathbf{1}_{Z_k}(x,y)+\mathbf{1}_{Z^*_k}(x,y)\big)\rho^{kp}(\rho-1)^pp_t(y,dx)d\mu(y).
\end{align*}
One can now prove, see~\cite[Lemma 7.1]{BCLS95} with $a=p$ that 
\[
\sum_{k\in\mathbb{Z}}\mathbf{1}_{Z_k}(x,y)\rho^{kp}(\rho-1)^p\leq p|f(x)-f(y)|^p
\]
and the same holds for $Z_k^*$, hence 
\[
\sum\limits_{k\in\mathbb{Z}}J_1(k)+\sum\limits_{k\in\mathbb{Z}}J_2(k)\leq (2p+1)\int_X\int_X|f(x)-f(y)|^pp_t(y,dx)d\mu(y).
\]
Adding to these the term from~\eqref{E:L:local_norm_approx:a_h2} finally yields~\eqref{E:L:local_norm_approx:a_h1}. 
\end{proof}
\begin{remark}\label{R:local_norm_approx:a}
The previous Lemma~\ref{L:local_norm_approx:a}  corresponds to the condition $(H_p)$, $p\geq 1$, introduced in~\cite[Section 2]{BCLS95}. This fact will become specially relevant later to obtain Trudinger-Moser inequalities.
\end{remark}

\subsection{ $L^p$ Pseudo-Poincar\'e inequalities}\label{subS:PPIs}

Pseudo-Poincar\'e inequalities are a widely applicable tool to obtain Sobolev inequalities, see e.g.~\cite[Section 3.3]{SC02}. In this paragraph we introduce and discuss a pair of assumptions that will become crucial for our further analysis of Gagliardo-Nirenberg and Trudinger-Moser inequalities. Besides the corresponding $L^p$ pseudo-Poincar\'e inequalities, that are related to a weak notion of curvature (in the Bakry-\'Emery sense) of the underlying space, we will also impose certain regularity conditions on the semigroup $\{P_t\}_{t\geq 0}$.

\subsubsection{Global versions}

As with the definition of the BV and the Sobolev classes, the conditions we discuss are expressed differently in each case, which we therefore present separately.

\subsubsection*{The case $p>1$}
The two assumptions that we consider concern the validity of a $L^p$ pseudo-Poincar\'e inequality, and the continuity of the heat semigroup in a suitable Sobolev space.
\begin{itemize}[leftmargin=2em]
\item\textbf{Condition} $(\mathrm {PPI}_p)$, $p \ge 1$. There exists a constant $C_p>0$ such that for every $t \ge 0$ and $f \in W^{1,p}(\mathcal{E})$ (or $BV(\mathcal{E})$ for $p=1$),
\[
\| P_t f -f \|_{L^p(X,\mu)} \le C_p t^{\alpha_p} \mathbf{Var}_{p,\mathcal{E}} (f).
\]
\item\textbf{Condition} $(\mathrm G_q)$, $q>1$. There exists a constant $C_q>0$ such that for every $t >0$ and $f \in L^q (X,\mu)$, 
\begin{align}\label{weakBELp}
\| P_t f \|_{q,\alpha_q} \le  \frac{C_q}{t^{1-\alpha_p}} \| f \|_{L^q(X,\mu)},
\end{align} 
where $p$ is the H\"older conjugate exponent of $p$,  i.e. $\frac{1}{p}+\frac{1}{q}=1$.
\end{itemize}

\begin{remark}
It follows from spectral theory that $\alpha_2=1/2$ and that the assumptions $(\mathrm G_2)$ and $(\mathrm {PPI}_2)$ always hold, see also Theorem~\ref{T:BV3-p=2}.
\end{remark}

\begin{proposition}\label{P:Lp_pseudoPI}
Let $p>1$ and let $q$ be its H\"older conjugate. Under condition $(\mathrm G_q)$, for every $f \in W^{1,p}(\mathcal E)$ and $ t \ge 0$
\[
\| P_t f - f\|_{L^p(X,\mu)} \le \frac{C_q}{2\alpha_p} t^{\alpha_p} \mathbf{Var}_{p,\mathcal{E}} (f),
\]
where $C_q$ is the same as in~\eqref{weakBELp}. In particular, condition $(\mathrm {PPI}_p)$ is satisfied.
\end{proposition}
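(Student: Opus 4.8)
The plan is to exploit the duality between $W^{1,p}(\mathcal E)$ and its predual together with condition $(\mathrm G_q)$, turning the smoothing estimate on $P_t$ into a pseudo-Poincar\'e inequality by an integration argument. The key observation is that $P_t f - f = -\int_0^t \frac{d}{ds} P_s f\, ds$ should be paired against a test function $g \in L^q(X,\mu)$ with $\|g\|_{L^q} \le 1$, so that $\|P_t f - f\|_{L^p} = \sup_g \int_X (P_t f - f) g\, d\mu$. Using symmetry of the semigroup, $\int_X (P_t f - f) g\, d\mu = \int_X f (P_t g - g)\, d\mu$, and I would like to bound this by $\mathbf{Var}_{p,\mathcal E}(f)$ times something controlled by $(\mathrm G_q)$. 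The natural route is to write $P_t g - g = -\int_0^t \Delta P_s g\, ds$ heuristically, but since we are working in a general Dirichlet space without assuming a carr\'e du champ, one should instead work directly with the Besov seminorm: the quantity $\mathbf{Var}_{p,\mathcal E}(f)$ controls $\left(\int_X P_s(|f - f(y)|^p)(y)\,d\mu(y)\right)^{1/p} \le C s^{\alpha_p}\mathbf{Var}_{p,\mathcal E}(f)$ for small $s$, and more importantly the $W^{1,p}$ norm of $f$ pairs against $\|P_s g\|_{q,\alpha_q}$ through the fundamental inequality $\left|\int_X f (P_s g - g)\,d\mu \right| \le \mathbf{Var}_{p,\mathcal E}(f)\cdot\big(\text{something involving }\|P_s g\|_{q,\alpha_q}\big)$.

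More concretely, I would use the following chain. For $g \in L^q$ and $s > 0$, consider $\frac{d}{ds}\int_X f \cdot P_s g\, d\mu$. Formally this equals $\int_X f\, \Delta P_s g\, d\mu = -\mathcal E(f, P_s g)$ when $f \in \mathcal F$, but for general $f \in W^{1,p}(\mathcal E)$ one wants the inequality $\mathcal E(f, h) \le$ (something) $\cdot \|h\|_{q,\alpha_q}$. This is precisely the kind of pairing estimate that underlies the identification of $\mathbf{Var}_{p,\mathcal E}$ as a dual norm. So the real engine is: for $h$ in the appropriate Besov class, $\left|\int_X f(P_s g - g)\,d\mu\right| \le s^{\alpha_p}\cdot(\text{const})\cdot\mathbf{Var}_{p,\mathcal E}(f)\cdot s^{-\alpha_q}\|P_s g\|_{q,\alpha_q}$, wait — one needs the exponents to match: since $\alpha_p + \alpha_q$ need not equal $1$ in general, but condition $(\mathrm G_q)$ precisely builds in the exponent $1 - \alpha_p$, I would track carefully that $\|P_s g\|_{q,\alpha_q} \le C_q s^{\alpha_p - 1}\|g\|_{L^q}$ and that the pairing $\int_X (P_s g - g)f\,d\mu$ against $f\in \mathbf{B}^{p,\alpha_p}$ gains a factor $s^{\alpha_q}$ relative to the $\|\cdot\|_{q,\alpha_q}$ seminorm — no, more directly, the heat kernel pairing gives $\left|\int f(x)(P_\tau g - g)(x)d\mu(x)\right| = \left|\iint (f(x)-f(y))g(y)p_\tau(y,dx)d\mu(y)\right| \le \left(\iint|f(x)-f(y)|^p p_\tau(y,dx)d\mu(y)\right)^{1/p}\|g\|_{L^q}$ by H\"older, which is $\le \tau^{\alpha_p}\mathbf{Var}_{p,\mathcal E}(f)\|g\|_{L^q}$ up to constants for $\tau$ small. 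Then the differentiation/integration step is really about integrating $\frac{d}{ds}$ of $\int f P_s g\,d\mu$ over $(0,t)$, and using the semigroup property $P_s g = P_{s/2}P_{s/2}g$ to write $\frac{d}{ds}\int f P_s g = \lim_{\tau\to0}\tau^{-1}\int f(P_{s+\tau}g - P_s g)d\mu$ and estimate via $\|P_{s}g\|_{q,\alpha_q}$ from $(\mathrm G_q)$, obtaining $\left|\frac{d}{ds}\int f P_s g\,d\mu\right| \lesssim s^{\alpha_p - 1}\mathbf{Var}_{p,\mathcal E}(f)\|g\|_{L^q}$, and then $\int_0^t s^{\alpha_p-1}ds = t^{\alpha_p}/\alpha_p$ produces the factor $\frac{1}{2\alpha_p}$ after a careful accounting of where the $2$ enters (presumably from splitting $P_s = P_{s/2}P_{s/2}$ and applying the bound to $P_{s/2}g \in W^{1,q}$... rather $\|P_{s/2}g\|_{q,\alpha_q}$).

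So the key steps, in order, are: (1) reduce to estimating $\int_X (P_t f - f) g\, d\mu$ for $\|g\|_{L^q} \le 1$ via $L^p$-$L^q$ duality; (2) use self-adjointness to move to $\int_X f (P_t g - g)\, d\mu$; (3) write this as $\int_0^t \frac{d}{ds}\int_X f P_s g\, d\mu\, ds$, justifying differentiation under the integral and the identity $\frac{d}{ds}\int_X f P_s g\, d\mu = -\mathcal{E}(f, P_s g)$ (or its weak substitute) — this requires $P_s g \in \mathcal F$ for $s>0$, which holds by analyticity of the semigroup on $L^2$, plus an approximation to handle $g \in L^q$ rather than $L^2 \cap L^q$; (4) bound $\mathcal E(f, P_s g)$ by using that $P_s g = P_{s/2}(P_{s/2}g)$ and the heat-kernel pairing estimate, feeding in $(\mathrm G_q)$ to control $\|P_{s/2}g\|_{q,\alpha_q} \le C_q (s/2)^{\alpha_p - 1}\|g\|_{L^q}$; (5) integrate $s^{\alpha_p - 1}$ over $(0,t)$ to get $\frac{t^{\alpha_p}}{\alpha_p}$, absorb the dyadic factor to reach the stated constant $\frac{C_q}{2\alpha_p}$. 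The main obstacle I anticipate is step (3)–(4): making the formal manipulation $\frac{d}{ds}\langle f, P_s g\rangle = -\mathcal E(f,P_s g)$ rigorous and bounding $\mathcal E(f, P_s g)$ by the total $p$-variation of $f$ in a general Dirichlet space where no pointwise gradient is available — one must instead go through the Besov-seminorm characterization and the heat-kernel representation, and carefully verify that the relevant seminorm of $P_s g$ is exactly what $(\mathrm G_q)$ bounds. The density of nice functions and a monotone/dominated convergence argument to pass from $L^2\cap L^q$ to all of $L^q$, and from $t>0$ to $t = 0$, should be routine once the core estimate is in place.
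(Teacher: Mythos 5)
Your plan follows essentially the same route as the paper's proof: reduce by $L^p$--$L^q$ duality to bounding $\int_X(f-P_tf)h\,d\mu$ for $h\in L^q$, replace the formal identity $\frac{d}{ds}\int_X fP_sh\,d\mu=-\mathcal E(f,P_sh)$ by the approximating forms $\mathcal E_\tau(u,v)=\frac1\tau\int_X u(I-P_\tau)v\,d\mu$ (the paper makes this rigorous via $\int_X(f-P_tf)h\,d\mu=\lim_{\tau\to0^+}\int_0^t\mathcal E_\tau(P_sf,h)\,ds$, using strong continuity of the semigroup), then apply H\"older between the $p$-increments of $f$ and the $q$-increments of $P_sh$, invoke $(\mathrm G_q)$ to produce the factor $s^{\alpha_p-1}$, and integrate in $s$ to get $t^{\alpha_p}/\alpha_p$ before taking $\liminf_{\tau\to0^+}$ to recover $\mathbf{Var}_{p,\mathcal{E}}(f)$. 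The one point to correct is the provenance of the $\tfrac12$ in the stated constant: it comes from the symmetrized kernel representation $\mathcal E_\tau(u,v)=\frac1{2\tau}\int_X\int_X p_\tau(x,y)\bigl(u(x)-u(y)\bigr)\bigl(v(x)-v(y)\bigr)\,d\mu(x)\,d\mu(y)$, not from a splitting $P_s=P_{s/2}P_{s/2}$, which is unnecessary and would only yield the worse prefactor $2^{1-\alpha_p}$ in place of $\tfrac12$.
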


\begin{proof}
For any $u,v\in\mathcal{F}$ we denote
\[
\mathcal{E}_\tau(u,v)= \frac1\tau\int_X u(I-P_\tau)v\,d\mu 
 = \frac1{2\tau} \int_X\int_X p_\tau(x,y)\bigl( u(x)-u(y)\bigr)\bigl(v(x)-v(y) \bigr) d\mu(x)d\mu(y).
 \]
Fix $f\in L^p(X,\mu)$ and $h\in L^q(X,\mu)$, recalling that $p$ and $q$ are conjugate exponents. Using the strong continuity of $P_t$ in $L^1(X,\mu)$, for $t>0$ one has (see e.g. the proof of~\cite[Proposition 3.10]{ABCRST3}) 
\begin{align}
\int_X (f-P_t f  ) h\, d\mu =  \lim_{\tau \to 0^+} \int_0^t \mathcal{E}_\tau (P_s f ,h) ds. \label{eq:Pt-IbyVar1}
\end{align}
Applying H\"older's inequality and $(\mathrm G_q)$ yields
\begin{align*}
& 2\big|\mathcal{E}_\tau(f,P_sh)\bigr| \\
\leq & \frac{1}{\tau}\int_X \int_X p_\tau (x,y) | P_s h(x)-P_sh(y)| \, |f(x)-f(y)| d\mu(x) d\mu(y)\,ds \notag \\
\leq &  \frac{1}{\tau} \biggl(\int_X\int_X p_\tau(x,y)| P_s h(x)-P_sh(y)|^q\,d\mu(x)\,d\mu(y)\biggr)^{1/q} \biggl(\int_X\int_X p_\tau(x,y)|f(x)-f(y)|^p\,d\mu(x)\,d\mu(y)\biggr)^{1/p} \notag \\ 
\leq & C_q  \tau^{-\alpha_p} s^{-(1-\alpha_p)} \|h\|_{L^q(X,\mu)}  \biggl(\int_X\int_X p_\tau(x,y)|f(x)-f(y)|^p\,d\mu(x)\,d\mu(y)\biggr)^{1/p}.
\end{align*}
Integrating over $s\in(0,t)$ and taking $\liminf_{\tau\to0^+}$ gives for $p >1 $
\begin{align*}
\biggl| \int_X (f-P_t f  ) h d\mu \biggr| \le \frac{C_q}{2} \frac{t^{\alpha_p}}{\alpha_p}  \| h \|_{L^q(X,\mu)} \mathbf{Var}_{p,\mathcal{E}} (f)
\end{align*}
and the conclusion follows by $L^p$-$L^q$ duality.
\end{proof}

\subsubsection*{The case $p=1$}

Recall that the semigroup $\{P_t\}_{t\geq 0}$ admits a measurable heat kernel $p_t(x,y)$ because $(X,\mu)$ is assumed to be a good measurable space, c.f.~\cite[Theorem 1.2.3]{BGL14}). In addition, we consider the space $(X,\mu)$ to be endowed with a metric $d$. This metric $d$ does not need to be intrinsically associated with the Dirichlet form but has to satisfy some conditions listed below. The pseudo-Poincar\'e inequality considered for $p>1$ is now replaced by an 
inequality for the heat kernel, whereas the regularity condition on the heat semigroup in this case is spelled in terms of its H\"older continuity.
\begin{itemize}[leftmargin=2em]
\item\textbf{Condition}. 
For any $\kappa \ge 0$, there exist constants $C,c>0$ such that for every $t>0$ and a.e. $x,y \in X$
\begin{align}\label{E:weak_Harnack}
d(x,y)^\kappa p_t(x,y) \le C t^{\kappa/d_W} p_{ct}(x,y),
\end{align}
where $d_W>1$ is a parameter independent from $\kappa, C$ and $c$.
\item\textbf{Condition} $(\mathrm G_\infty)$. There exists a constant $C>0$ so that for every $t >0$, $f \in L^\infty(X,\mu)$, and $x,y \in X$
\begin{align}\label{weakBE1}
| P_t f (x)-P_t f(y)| \le C \frac{d(x,y)^{d_W(1-\alpha_1)}}{t^{1-\alpha_1}} \| f \|_{L^\infty(X,\mu)}.
\end{align} 
\end{itemize}
We note that~\eqref{E:weak_Harnack} is for instance satisfied if $p_t(x,y)$ satisfies sub-Gaussian heat kernel estimates, see~\cite[Lemma 2.3]{ABCRST3} 
and that 
the condition $(\mathrm G_\infty)$ was called in~\cite{ABCRST3} the weak Bakry-\'Emery estimate.
 
\begin{remark}\label{R:wBE_to_Gq}
Since $(\mathrm G_2)$ always holds for every $t>0$, using interpolation theory, one deduces as in the proof of~\cite[Theorem 3.9]{ABCRST3} that the assumption $(\mathrm G_\infty)$ implies that for every $t >0$,  $q \ge 2$ and $f \in L^p(X,\mu)$, 
\begin{equation}\label{E:Lp_bdd}
\| P_t f \|_{q,\beta_q} \le  \frac{C_q}{t^{\beta_q}} \| f \|_{L^q(X,\mu)},
\end{equation}
where $\beta_q=\big( 1-\frac{2}{q}\big)(1-\alpha_1)+\frac{1}{q}$. This is not quite the same as $(\mathrm G_q)$, unless $1-\alpha_p=\beta_q$, i.e $\alpha_p=\big( 1-\frac{2}{p}\big)(1-\alpha_1)+\frac{1}{p}$. Note that for the Vicsek set (or direct products of it) one indeed has $\alpha_p=\big( 1-\frac{2}{p}\big)(1-\alpha_1)+\frac{1}{p}$, see Remark~\ref{R:Vicsek2}.
\end{remark}

\begin{proposition}\label{PseudoP}
If the Dirichlet space $(X,d,\mu,\mathcal E)$ satisfies $(\mathrm G_\infty)$and~\eqref{E:weak_Harnack}, there exists a constant $C>0$ such that for every $f \in BV(\mathcal E)$ and $ t \ge 0$,
\[
\| P_t f - f\|_{L^1(X,\mu)} \le C t^{\alpha_1} \mathbf{Var}_\mathcal{E} (f).
\]
In particular $(\mathrm {PPI}_1)$ is satisfied.
\end{proposition}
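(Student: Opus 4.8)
The plan is to argue by $L^1$--$L^\infty$ duality, following the proof of Proposition~\ref{P:Lp_pseudoPI} and replacing the roles of H\"older's inequality and $(\mathrm G_q)$ by those of the weak Harnack inequality~\eqref{E:weak_Harnack} and $(\mathrm G_\infty)$, respectively. The case $t=0$ being trivial, fix $t>0$ and a test function $h\in L^\infty(X,\mu)$ with $\|h\|_{L^\infty(X,\mu)}\le 1$; it suffices to bound $\bigl|\int_X(f-P_tf)h\,d\mu\bigr|$ by $Ct^{\alpha_1}\mathbf{Var}_{\mathcal E}(f)$ uniformly over such $h$. With $\mathcal{E}_\tau(u,v)=\frac1\tau\int_X u(I-P_\tau)v\,d\mu$ as in that proof, the identity~\eqref{eq:Pt-IbyVar1} continues to hold here, since it only uses the strong continuity of $\{P_s\}_{s\ge0}$ together with the boundedness of $P_s(I-P_\tau)h$; combining it with the symmetry $\mathcal{E}_\tau(P_sf,h)=\mathcal{E}_\tau(f,P_sh)$ gives
\[
\int_X (f-P_tf)h\,d\mu=\lim_{\tau\to0^+}\int_0^t \mathcal{E}_\tau(f,P_sh)\,ds.
\]

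Next I would estimate $\mathcal{E}_\tau(f,P_sh)$ pointwise in $\tau,s>0$. Writing $\mathcal{E}_\tau$ through its heat kernel (legitimate by conservativeness and the symmetry of $p_\tau$) and applying $(\mathrm G_\infty)$, i.e.\ $|P_sh(x)-P_sh(y)|\le C\,d(x,y)^{d_W(1-\alpha_1)}s^{-(1-\alpha_1)}$ for a.e.\ $x,y$ using $\|h\|_{L^\infty}\le1$, one gets
\[
2\bigl|\mathcal{E}_\tau(f,P_sh)\bigr|\le \frac{C}{\tau\,s^{1-\alpha_1}}\int_X\int_X p_\tau(x,y)\,d(x,y)^{d_W(1-\alpha_1)}\,|f(x)-f(y)|\,d\mu(x)\,d\mu(y).
\]
Invoking~\eqref{E:weak_Harnack} with $\kappa=d_W(1-\alpha_1)$ --- which reads $d(x,y)^{d_W(1-\alpha_1)}p_\tau(x,y)\le C\tau^{1-\alpha_1}p_{c\tau}(x,y)$, since $\kappa/d_W=1-\alpha_1$ --- this becomes
\[
2\bigl|\mathcal{E}_\tau(f,P_sh)\bigr|\le \frac{C}{\tau^{\alpha_1}s^{1-\alpha_1}}\int_X P_{c\tau}\bigl(|f-f(y)|\bigr)(y)\,d\mu(y).
\]
Integrating in $s$ over $(0,t)$ with $\int_0^t s^{-(1-\alpha_1)}\,ds=t^{\alpha_1}/\alpha_1$ gives
\[
\biggl|\int_0^t \mathcal{E}_\tau(f,P_sh)\,ds\biggr|\le \frac{Cc^{\alpha_1}}{2\alpha_1}\,t^{\alpha_1}\,(c\tau)^{-\alpha_1}\int_X P_{c\tau}\bigl(|f-f(y)|\bigr)(y)\,d\mu(y).
\]

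To conclude, I pass to the $\liminf$ as $\tau\to0^+$: on the left, the genuine limit from the first step yields $\liminf_{\tau\to0^+}\bigl|\int_0^t\mathcal{E}_\tau(f,P_sh)\,ds\bigr|=\bigl|\int_X(f-P_tf)h\,d\mu\bigr|$; on the right, the change of variable $u=c\tau$ and the definition of $\mathbf{Var}_{\mathcal E}$ give $\liminf_{\tau\to0^+}(c\tau)^{-\alpha_1}\int_X P_{c\tau}(|f-f(y)|)(y)\,d\mu(y)=\mathbf{Var}_{\mathcal E}(f)$. Hence $\bigl|\int_X(f-P_tf)h\,d\mu\bigr|\le \frac{Cc^{\alpha_1}}{2\alpha_1}t^{\alpha_1}\mathbf{Var}_{\mathcal E}(f)$, and taking the supremum over $h$ with $\|h\|_{L^\infty}\le1$ yields the asserted pseudo-Poincar\'e inequality (so in particular $(\mathrm{PPI}_1)$ holds), with $C$ the product of the constants from $(\mathrm G_\infty)$ and~\eqref{E:weak_Harnack} and $c$ the dilation parameter of~\eqref{E:weak_Harnack}.

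The only delicate point is the first step, namely justifying~\eqref{eq:Pt-IbyVar1} and the heat kernel representation of $\mathcal{E}_\tau$ for $f\in BV(\mathcal E)$, which need not lie in $\mathcal F=\mathbf{dom}\,\mathcal E$. This is not a genuine obstacle: the identity only pairs the well-controlled functions $P_sf$ with the bounded functions $P_s(I-P_\tau)h$, and the kernel formula for $\mathcal{E}_\tau$ is purely algebraic (it uses $P_\tau1=1$ and the symmetry of $p_\tau$), so one proceeds exactly as in the proof of~\cite[Proposition 3.10]{ABCRST3}. A secondary routine point --- interchanging $\liminf_{\tau\to0^+}$ with the $s$-integral --- is sidestepped by bounding the $s$-integral first and only then passing to the $\liminf$, as above.
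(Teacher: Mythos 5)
Your argument is correct and is essentially the proof the paper relies on: the paper simply cites \cite[Proposition 3.10]{ABCRST3}, whose argument is exactly this $L^1$--$L^\infty$ duality scheme combining $(\mathrm G_\infty)$ with~\eqref{E:weak_Harnack} for $\kappa=d_W(1-\alpha_1)$, in parallel with the in-paper proof of Proposition~\ref{P:Lp_pseudoPI}. The exponent bookkeeping ($\tau^{-1}\cdot\tau^{1-\alpha_1}=\tau^{-\alpha_1}$ and $\int_0^t s^{-(1-\alpha_1)}\,ds=t^{\alpha_1}/\alpha_1$) and the final passage to $\liminf_{\tau\to0^+}$, which produces $\mathbf{Var}_{\mathcal E}(f)$, are all sound.
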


\begin{proof}
See~\cite[Proposition 3.10]{ABCRST3}.
\end{proof}

\subsubsection{Localized versions}

We finish this section with the local counterparts of the previous conditions since these shall ultimately be used to obtain the whole family of inequalities in the subsequent sections. 
 
\begin{itemize}[leftmargin=2em]
\item\textbf{Condition} $(\mathrm {PPI}_p(R))$, $p \ge 1$. There exists a constant $C_p(R)>0$ such that for every $t \in (0,R)$ and $f \in W^{1,p}(\mathcal{E})$ (or $BV(\mathcal{E})$ for $p=1$),
\[
\| P_t f -f \|_{L^p(X,\mu)} \le C_p(R) t^{\alpha_p} \mathbf{Var}_{p,\mathcal{E}} (f).
\]
\item\textbf{Condition} $(\mathrm G_q(R))$, $q>1$, $R>0$. There exists a constant $C_q(R)>0$ such that for every $t \in (0,R)$ and $f \in L^q (X,\mu)$, 
\begin{equation}\label{weakBELpR}
\| P_t f \|_{q,\alpha_q} \le  \frac{C_q}{t^{1-\alpha_p}} \| f \|_{L^q(X,\mu)},
\end{equation}
where as before $p$ is the H\"older conjugate exponent of $p$,  i.e. $\frac{1}{p}+\frac{1}{q}=1$.
\end{itemize}

The same proof as Proposition~\ref{P:Lp_pseudoPI} yields the following result.

\begin{proposition}
Let $p>1$, $R>0$ and assume that $(\mathrm G_q(R))$ holds, where $q$ is the H\"older conjugate of $p$. Then, for every $f \in W^{1,p}(\mathcal E)$ and $t \in (0,R)$,
\begin{equation*}
\| P_t f - f\|_{L^p(X,\mu)} \le \frac{C_q(R)}{2\alpha_p} t^{\alpha_p} \mathbf{Var}_{p,\mathcal{E}} (f)
\end{equation*}
with the same constant $C_q$ as in~\eqref{weakBELpR}. In particular, $(\mathrm {PPI}_p(R))$ is satisfied.
\end{proposition}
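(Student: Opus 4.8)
The plan is to mimic essentially verbatim the proof of Proposition~\ref{P:Lp_pseudoPI}, tracking that every time scale that appears stays inside the admissible range $(0,R)$, so that the localized hypothesis $(\mathrm G_q(R))$ suffices. Concretely, fix $p>1$ with H\"older conjugate $q$, fix $f\in W^{1,p}(\mathcal E)$ and $h\in L^q(X,\mu)$, and fix $t\in(0,R)$. As in the global proof, start from the identity
\[
\int_X (f-P_t f)\,h\,d\mu = \lim_{\tau\to0^+}\int_0^t \mathcal{E}_\tau(P_s f,h)\,ds,
\]
which follows from the strong continuity of $P_t$ in $L^1(X,\mu)$ exactly as in~\cite[Proposition 3.10]{ABCRST3}; this step does not involve any restriction on time scales.

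Next I would estimate $2|\mathcal{E}_\tau(f,P_s h)|$ by Cauchy--Schwarz in the form used in the global proof: split $p_\tau(x,y)=p_\tau(x,y)^{1/q}\cdot p_\tau(x,y)^{1/p}$ and apply H\"older's inequality with exponents $q$ and $p$ to get the product of $\big(\int\int p_\tau|P_s h(x)-P_s h(y)|^q\big)^{1/q}$ and $\big(\int\int p_\tau|f(x)-f(y)|^p\big)^{1/p}$, up to the factor $\tau^{-1}$. The first factor is $\tau^{-1}\cdot\tau\cdot\|P_s h\|_{q,\alpha_q}^{?}$-type quantity; more precisely it is controlled, after recognizing that $\frac1\tau\int\int p_\tau(x,y)|P_sh(x)-P_sh(y)|^q\,d\mu(x)d\mu(y)\le 2\tau^{q\alpha_q-1}\|P_s h\|_{q,\alpha_q}^q$ via the very definition of the $\|\cdot\|_{q,\alpha_q}$ semi-norm and the bound $P_\tau(|u-u(y)|^q)(y)\le$ the integrand there. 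The only new point is that, here, we must make sure $s\in(0,t)\subset(0,R)$ when we invoke $(\mathrm G_q(R))$ to bound $\|P_s h\|_{q,\alpha_q}\le C_q(R)s^{-(1-\alpha_p)}\|h\|_{L^q(X,\mu)}$; since $s$ ranges over $(0,t)$ and $t<R$, this is automatic. Thus, exactly as in the global argument, one arrives at
\[
2\big|\mathcal{E}_\tau(f,P_s h)\big| \le C_q(R)\,\tau^{-\alpha_p}\,s^{-(1-\alpha_p)}\,\|h\|_{L^q(X,\mu)}\Big(\int_X\int_X p_\tau(x,y)|f(x)-f(y)|^p\,d\mu(x)\,d\mu(y)\Big)^{1/p}.
\]

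Finally, integrate over $s\in(0,t)$, using $\int_0^t s^{-(1-\alpha_p)}\,ds = t^{\alpha_p}/\alpha_p$ (valid since $0<\alpha_p<1$), and take $\liminf_{\tau\to0^+}$; the factor $\tau^{-\alpha_p}\big(\int\int p_\tau|f(x)-f(y)|^p\big)^{1/p}$ has $\liminf$ equal to $\mathbf{Var}_{p,\mathcal{E}}(f)$ by Definition~\ref{D:pSobolev_class} (recall $\alpha_p$ is the critical exponent and $p_\tau(y,dx)=p_\tau(x,y)d\mu(x)$). This gives
\[
\Big|\int_X (f-P_t f)\,h\,d\mu\Big| \le \frac{C_q(R)}{2\alpha_p}\,t^{\alpha_p}\,\|h\|_{L^q(X,\mu)}\,\mathbf{Var}_{p,\mathcal{E}}(f),
\]
and the claimed bound follows by $L^p$--$L^q$ duality, with $(\mathrm{PPI}_p(R))$ being the special case of the statement (with constant $C_p(R)=C_q(R)/(2\alpha_p)$). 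There is essentially no obstacle here: the argument is a line-by-line transcription of Proposition~\ref{P:Lp_pseudoPI}, and the only thing requiring a moment's care is the bookkeeping that all the heat-semigroup times $s,\tau$ invoked remain in $(0,R)$ — which they do because $s<t<R$ and $\tau\to0^+$ — so that $(\mathrm G_q(R))$ is applicable throughout. I would therefore simply remark that "the proof is identical to that of Proposition~\ref{P:Lp_pseudoPI}, noting that all time scales involved lie in $(0,R)$," and perhaps reproduce the single displayed chain of inequalities above for completeness.
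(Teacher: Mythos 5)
Your proposal is correct and is exactly the argument the paper intends: its proof of this localized statement consists of the single remark that ``the same proof as Proposition~\ref{P:Lp_pseudoPI} yields the result,'' and your line-by-line transcription, together with the observation that the only semigroup time fed into $(\mathrm G_q(R))$ is $s\in(0,t)\subset(0,R)$, is precisely that verification.
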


Similarly, to treat the case $p=1$ one can introduce a localized version of~\eqref{E:weak_Harnack} and of the condition $(\mathrm G_\infty(R))$, $R>0$ to prove the localized analogue of Proposition~\ref{PseudoP}. We omit the details for conciseness.

\subsection{Weak Bakry-\'Emery estimates}

As shown in the previous section, uniform regularization properties of the semigroup $\{P_t\}_{t\geq 0}$ play an important role in our study because they yield pseudo-Poincar\'e type estimates for the semigroup. In this section, we investigate some self-improvement properties of the assumption $(\mathrm G_\infty(R))$, $R>0$.

\begin{lemma}\label{L:local_wBECD1}
Let $d$ be a metric on $X$. Let $R>0$ and assume that there exist constants $C,\kappa,d_W>0$ such that for every $t \in (0,R)$, $f\in L^\infty(X,\mu)$ and $x,y \in X$,
\begin{equation}\label{E:local_wBECD1}
| P_t f (x)-P_t f(y)| \le C \frac{d(x,y)^\kappa}{t^{\kappa/d_W}} \| f \|_{ L^\infty(X,\mu)}.
\end{equation}
Then, for any $R' \ge R$,~\eqref{E:local_wBECD1} also holds for every $t \in (0,R')$ with a possibly different constant $C=C_{R'}$.
\end{lemma}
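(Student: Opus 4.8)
The plan is to bootstrap the estimate \eqref{E:local_wBECD1} from the interval $(0,R)$ to the larger interval $(0,R')$ by using the semigroup property together with the contractivity of $P_t$ on $L^\infty$. The only issue is times $t \in [R, R')$, since for $t \in (0,R)$ there is nothing to prove. So fix such a $t$ and write $t = nR/2 + r$ with $n \ge 1$ an integer and $r \in [0, R/2)$, so that $t/2 \le nR/2 \le t$ (one can also simply split $t$ into $n$ equal pieces each of length $t/n < R$ for a suitable $n$ depending only on $R'/R$; either bookkeeping works). Then $P_t f = P_{t_1}(P_{t_2} \cdots)$ with each step of length $< R$, and we apply \eqref{E:local_wBECD1} only on the last factor.

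Concretely, I would argue: $P_t f = P_{t - s}(P_s f)$ where I choose $s \in (0,R)$ (for instance $s = R/2$, which is legitimate once $t \ge R/2$, hence certainly once $t \ge R$). Applying \eqref{E:local_wBECD1} with the function $P_{t-s}f$ in place of $f$ and time $s$ gives
\[
| P_t f (x)-P_t f(y)| = | P_s(P_{t-s} f) (x)-P_s(P_{t-s}f)(y)| \le C \frac{d(x,y)^\kappa}{s^{\kappa/d_W}} \| P_{t-s} f \|_{L^\infty(X,\mu)}.
\]
Since $P_{t-s}$ is a contraction on $L^\infty(X,\mu)$ (the semigroup is Markovian and conservative), $\| P_{t-s} f \|_{L^\infty(X,\mu)} \le \| f \|_{L^\infty(X,\mu)}$. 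With $s = R/2$ fixed, $s^{-\kappa/d_W}$ is a constant, and moreover for $t \in [R, R')$ we have $1 \le (t/s)^{\kappa/d_W} = (2t/R)^{\kappa/d_W} \le (2R'/R)^{\kappa/d_W}$, so we may reinstate the factor $t^{-\kappa/d_W}$ at the cost of enlarging the constant:
\[
| P_t f (x)-P_t f(y)| \le C \left(\frac{2}{R}\right)^{\kappa/d_W}\!\! d(x,y)^\kappa \| f \|_{L^\infty(X,\mu)} \le C \left(\frac{2R'}{R}\right)^{\kappa/d_W} \frac{d(x,y)^\kappa}{t^{\kappa/d_W}} \| f \|_{L^\infty(X,\mu)}.
\]
Thus \eqref{E:local_wBECD1} holds for all $t \in (0, R')$ with $C_{R'} := C\,(2R'/R)^{\kappa/d_W}$ (taking the max with the original $C$ to cover $t \in (0,R)$).

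There is essentially no obstacle here: the statement is a soft self-improvement, and the only things used are the semigroup law $P_t = P_s P_{t-s}$, the $L^\infty$-contractivity of $P_{t-s}$ (which follows from conservativeness/Markovianity, already assumed throughout the paper), and elementary comparison of powers of $t$ on a bounded interval. The one point worth a line of care is ensuring the split time $s$ lies in $(0,R)$ for \emph{every} $t$ in the target range — handled by choosing $s = R/2$ once $t \ge R/2$, and noting $t < R$ is already covered by hypothesis. If one prefers to avoid any hypothesis $t \ge R/2$, one splits $t$ into $\lceil 2R'/R\rceil$ equal sub-intervals and iterates the contractivity bound, which gives the same conclusion with a slightly different explicit constant.
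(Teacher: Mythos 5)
Your argument is correct: the one-step decomposition $P_t f = P_{R/2}\bigl(P_{t-R/2}f\bigr)$ for $t\in[R,R')$, together with the $L^\infty$-contractivity of $P_{t-R/2}$ and the elementary comparison $(R/2)^{-\kappa/d_W}\le (2R'/R)^{\kappa/d_W}t^{-\kappa/d_W}$, gives exactly the claimed extension with $C_{R'}=C(2R'/R)^{\kappa/d_W}$. The paper proceeds differently in execution, though with the same two ingredients (semigroup law and $L^\infty$-contractivity): following an argument from the cited reference, it proves by induction on $n$ that $|P_{nt}f(x)-P_{nt}f(y)|\le C_R\,2^{(n-1)\kappa/d_W}\,d(x,y)^\kappa (nt)^{-\kappa/d_W}\|f\|_{L^\infty(X,\mu)}$ for $t\in(0,R)$, re-applying the hypothesis at every one of the $n$ steps and paying a factor $2^{\kappa/d_W}$ each time, and then writes an arbitrary $t\in[R,R')$ as $t=ns$ with $s\in(0,R)$. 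Your version applies the hypothesis only once, at the final time slice of fixed length $R/2$, and lets contractivity absorb the remaining time $t-R/2$; this is shorter, avoids the induction entirely, and yields a constant that grows only polynomially in $R'/R$ (like $(R'/R)^{\kappa/d_W}$) rather than exponentially (like $2^{n\kappa/d_W}$ with $n\sim R'/R$), which is immaterial for the statement as phrased but is a genuinely cleaner bookkeeping. The only point needing the care you already gave it is that the split time must lie in $(0,R)$, which your choice $s=R/2$ guarantees for every $t\ge R$.
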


\begin{proof}
Let $f\in L^\infty(X,\mu)$ and $x,y \in X$. We use an argument from~\cite{BK19} and prove first by induction that, for any $t\in (0,R)$ and $n\in\mathbb{N}$
\begin{equation}\label{E:local_wBECD1_01}
| P_{nt} f (x)-P_{nt} f(y)| \le C_R2^{\frac{(n-1)\kappa}{d_W}}\frac{d(x,y)^\kappa}{(nt)^{\kappa/d_W} }\| f \|_{L^\infty (X,\mu)}.
\end{equation}
For $n=1$ this is assumption~\eqref{E:local_wBECD1}. Now, due to the semigroup property and the contractivity of $\{P_t\}_{t>0}$ we get
\begin{align*}
|P_{(n+1)t}f(x)-P_{(n+1)t}&f(y)|=|P_{nt}(P_t f)(x)-P_{nt}(P_t f)(y)|\leq C_R2^{\frac{(n-1)\kappa}{d_W}} \frac{d(x,y)^\kappa}{(nt)^{\kappa/d_W}}\|P_t f\|_{L^\infty(X,\mu)}\\
&\leq C\frac{d(x,y)^\kappa}{(nt)^{\kappa/d_W}}\| f\|_{L^\infty(X,\mu)}=\frac{C_R2^{\frac{(n-1)\kappa}{d_W}} d(x,y)^\kappa}{(nt+t)^{\kappa/d_W}}\bigg(\frac{nt+t}{nt}\bigg)^{\kappa/d_W}\!\!\|f\|_{L^\infty(X,\mu)}\\
&\le C_R2^{n\kappa/d_W}\frac{d(x,y)^\kappa}{(nt+t)^{\kappa/d_W}}\|f\|_{L^\infty(X,\mu)}.
\end{align*}
Finally, for any $R\leq t< R'$ there is $n\in\mathbb{N}$ and $s\in(0,R)$ such that $t=ns$, hence~\eqref{E:local_wBECD1_01} yields~\eqref{E:local_wBECD1} with a suitable constant.
\end{proof}

To extend~\eqref{E:local_wBECD1} to all of $t>0$ requires a better (uniform) control on the constants, which is possible under additional conditions.

\begin{lemma}\label{L:local_wBECD2}
Let $d$ be a metric on $X$. Let $R>0$ and assume that there exist constants $C,\kappa,d_W>0$ such that for every $t \in (0,R)$, $f\in L^\infty(X,\mu)$ and $x,y \in X$,
\begin{equation}\label{E:local_wBECD2}
| P_t f (x)-P_t f(y)| \le C \frac{d(x,y)^\kappa}{t^{\kappa/d_W}} \| f \|_{ L^\infty(X,\mu)}.
\end{equation}
Moreover, assume that
\begin{enumerate}[wide=0em,leftmargin=1.5em,label={\rm (\roman*)}]
\item the infinitesimal generator $\Delta$ of the Dirichlet form $(\mathcal E,\mathcal{F})$ has a pure point spectrum, 
\item $1 \in \mathbf{dom}\, \Delta $,
\item the Dirichlet space $(X,\mu,\mathcal E, \mathcal{F})$ satisfies the Poincar\'e inequality
\[
\int_X\Big( f-\int_X f d\mu\Big)^2\!d\mu\,\le \frac{1}{\lambda_1} \mathcal E (f,f)
\]
for some $\lambda_1>0$ and all  $f \in \mathcal{F}$,
\item the heat kernel $p_t(x,y)$ of $P_t$ satisfies the estimate
\[
p_{t_0}(x,y) \le M
\]
for some $ t_0 ,M>0$ and $\mu$-almost every $x,y \in X$.
\end{enumerate}
Then,~\eqref{E:local_wBECD2} holds for all $t>0$, possibly with a different constant $C>0$.
\end{lemma}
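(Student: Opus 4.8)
\emph{Strategy.} The plan is to treat small and large times separately. For $t$ in a bounded range the estimate is essentially already available: by Lemma~\ref{L:local_wBECD1}, \eqref{E:local_wBECD2} holds on every interval $(0,R_1)$ with $R_1\ge R$. So the whole issue is the regime $t\to\infty$, where the denominator $t^{\kappa/d_W}$ blows up. The idea is that, modulo its mean, $P_tf$ decays exponentially fast as $t\to\infty$ (this is where the spectral gap (iii) and $1\in\mathbf{dom}\,\Delta$ enter), and an exponential beats every power of $t$; crucially, the spatial factor $d(x,y)^\kappa$ need not be extracted from this decay at all — it is recovered for free by feeding the already-known small-time estimate with the function $P_{t-R_1/2}f$.

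\emph{Reductions and the two ingredients.} Since $1\in\mathbf{dom}\,\Delta\subset\mathcal F\subset L^2(X,\mu)$ one has $\mu(X)<\infty$; normalize $\mu(X)=1$ and set $\bar f:=\int_Xf\,d\mu$, so $|\bar f|\le\|f\|_{L^\infty(X,\mu)}$. Conservativeness gives $P_t1=1$, hence $P_t\bar f=\bar f$ and $P_tf(x)-P_tf(y)=P_t(f-\bar f)(x)-P_t(f-\bar f)(y)$; thus it suffices to control $P_t(f-\bar f)$, where $f-\bar f\in L^2(X,\mu)$ is orthogonal to the constants. First ingredient: the Poincar\'e inequality (iii), read as a form inequality on the core $\mathcal F$, forces $\ker\Delta=\mathbb R\cdot1$ and $\sigma(\Delta)\setminus\{0\}\subset[\lambda_1,\infty)$, so by the spectral theorem
\[
\|P_s(f-\bar f)\|_{L^2(X,\mu)}\le e^{-\lambda_1 s}\,\|f-\bar f\|_{L^2(X,\mu)}\le e^{-\lambda_1 s}\,\|f\|_{L^\infty(X,\mu)},\qquad s>0 .
\]
Second ingredient: from (iv) and conservativeness, $\int_X p_{t_0}(x,y)^2\,d\mu(y)\le M\int_X p_{t_0}(x,y)\,d\mu(y)=M$ for a.e.\ $x$, so $P_{t_0}$ is bounded $L^2(X,\mu)\to L^\infty(X,\mu)$ with norm $\le M^{1/2}$. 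Combining the two via the semigroup property, for every $s\ge t_0$,
\[
\|P_s(f-\bar f)\|_{L^\infty(X,\mu)}=\|P_{t_0}P_{s-t_0}(f-\bar f)\|_{L^\infty(X,\mu)}\le M^{1/2}e^{-\lambda_1(s-t_0)}\,\|f\|_{L^\infty(X,\mu)} .
\]

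\emph{Assembly.} Fix $R_1:=\max\{R,2t_0\}$ and, invoking Lemma~\ref{L:local_wBECD1}, assume \eqref{E:local_wBECD2} holds on $(0,R_1)$. For $t\ge R_1$ write $P_t(f-\bar f)=P_{R_1/2}\big(P_{t-R_1/2}(f-\bar f)\big)$ and apply \eqref{E:local_wBECD2} at the fixed time $R_1/2\in(0,R_1)$ to the bounded function $g_t:=P_{t-R_1/2}(f-\bar f)$:
\[
|P_tf(x)-P_tf(y)|=\big|P_{R_1/2}g_t(x)-P_{R_1/2}g_t(y)\big|\le C\,\frac{d(x,y)^\kappa}{(R_1/2)^{\kappa/d_W}}\,\|g_t\|_{L^\infty(X,\mu)} .
\]
Since $t-R_1/2\ge R_1/2\ge t_0$, the second ingredient gives $\|g_t\|_{L^\infty(X,\mu)}\le M^{1/2}e^{-\lambda_1(t-R_1/2-t_0)}\|f\|_{L^\infty(X,\mu)}$, and $e^{-\lambda_1 t}\le\big(\sup_{s>0}s^{\kappa/d_W}e^{-\lambda_1 s}\big)\,t^{-\kappa/d_W}$ with the supremum finite. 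Collecting the constants, all independent of $t,f,x,y$, yields \eqref{E:local_wBECD2} for $t\ge R_1$; together with the range $t\in(0,R_1)$ and the larger of the two constants, this gives \eqref{E:local_wBECD2} for every $t>0$.

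\emph{Main obstacle.} The one genuinely delicate point is precisely the one highlighted above: one should not try to manufacture the spatial factor $d(x,y)^\kappa$ out of the dissipative estimate. That factor is supplied, with the ``wrong'' fixed power $(R_1/2)^{-\kappa/d_W}$, by the small-time hypothesis applied at time $R_1/2$ to $P_{t-R_1/2}f$; the spectral gap serves only to furnish the extra exponential decay in $t$ that converts $(R_1/2)^{-\kappa/d_W}$ into the required $t^{-\kappa/d_W}$ and more. Everything else is bookkeeping: finiteness of $\mu$, $t$-independence of the accumulated constants, and enlarging the admissible small-time interval via Lemma~\ref{L:local_wBECD1}. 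We note that assumption~(i) is not used in the argument just sketched; only (ii)--(iv) and conservativeness of $\{P_t\}_{t\ge0}$ enter.
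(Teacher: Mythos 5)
Your proof is correct, but it follows a genuinely different route from the paper. The paper uses assumption (i) in an essential way: it expands $P_tf$ in the eigenfunctions $\phi_j$ of $\Delta$, bounds $\|\phi_j\|_{L^\infty(X,\mu)}\le Me^{\lambda_j t_0}$ via (iv), transfers the small-time H\"older estimate~\eqref{E:local_wBECD2} to each eigenfunction (since $P_{t_0}\phi_j=e^{-\lambda_j t_0}\phi_j$), and then sums the resulting series $\sum_j e^{-\lambda_j(t-2t_0)}$ for $t>2t_0$. You instead bypass the eigenfunction expansion entirely: the Poincar\'e inequality (iii) gives the exponential $L^2$-decay of $P_s(f-\bar f)$ on the mean-zero subspace (which $P_s$ preserves by self-adjointness and $P_s1=1$), assumption (iv) together with conservativeness upgrades this to $L^\infty$-decay for $s\ge t_0$ via the $L^2\to L^\infty$ bound $\|P_{t_0}\|_{L^2\to L^\infty}\le M^{1/2}$, and the spatial factor $d(x,y)^\kappa$ is supplied by writing $P_t=P_{R_1/2}P_{t-R_1/2}$ and applying the small-time estimate (extended to $(0,R_1)$ by Lemma~\ref{L:local_wBECD1}) at the fixed time $R_1/2$; the spectral gap then converts the fixed power $(R_1/2)^{-\kappa/d_W}$ into $t^{-\kappa/d_W}$. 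What your approach buys: it never uses the pure point spectrum hypothesis (i), as you correctly note, and it avoids the convergence and uniformity issues inherent in summing $\sum_j e^{-\lambda_j(t-2t_0)}$ and in bounding $\int_X\phi_j f\,d\mu$ term by term, so it is both slightly more general and cleaner in its bookkeeping. What the paper's approach buys is explicit quantitative H\"older regularity of the individual eigenfunctions, which is a byproduct of independent interest but is not needed for the conclusion. All the auxiliary steps you use (finiteness of $\mu(X)$ from (ii), $\|f-\bar f\|_{L^2(X,\mu)}\le\|f\|_{L^\infty(X,\mu)}$ after normalizing $\mu(X)=1$, $\int_X p_{t_0}(x,y)^2\,d\mu(y)\le M$ by conservativeness, and $t$-independence of the accumulated constants) check out.
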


\begin{proof}
By virtue of assumption (ii) one has $\mu (X) <+\infty$, so that without loss of generality we can assume $\mu(X)=1$. 
Let $\{\lambda_j\}_{j\geq 0}$ denote the eigenvalues of $\Delta$ and $\{\phi_j\}_{j\geq 0}$ the associated eigenfunctions. Assumptions (ii) and (iii), see e.g.~\cite[Proposition 3.1.6]{BGL14}, 
yield for any $f \in L^2(X,\mu)$
\begin{equation}\label{E:local_wBECD2_01}
P_t f (x)=\int_X f d\mu+\sum_{j=1}^{+\infty} e^{-\lambda_j t} \phi_j (x) \int_X \phi_j(y) f (y) d\mu(y).
\end{equation}
Now, since $P_{t_0} \phi_j=e^{-\lambda_j t_0} \phi_j$, applying H\"older's inequality and assumption (iv) we deduce for $\mu$-a.e. $x\in X$
\begin{align*}
|\phi_j (x)|&=e^{\lambda_j t_0}\left|  \int_X p_{t_0} (x,y) \phi_j (y) d\mu(y) \right|  
 \le e^{\lambda_j t_0} \left( \int_X p_{t_0} (x,y)^2 d\mu (y) \right)^{1/2} 
 \le M e^{\lambda_j t_0}.
\end{align*}
Next, using if needed Lemma~\ref{L:local_wBECD1}, we may assume $t_0 \le R$. Applying~\eqref{E:local_wBECD2} to $\phi_j$ and the latter estimate we obtain
\begin{equation*}
| e^{-\lambda_j t_0} \phi_j (x) -e^{-\lambda_j t_0}  \phi_j (y)| 
 \le C M \frac{d(x,y)^\kappa}{t_0^{\kappa/d_W}}  e^{\lambda_j t_0}
\end{equation*}
hence
\begin{equation}\label{E:local_wBECD2_02}
|  \phi_j (x) - \phi_j (y)|  \le C M \frac{d(x,y)^\kappa}{t_0^{\kappa/d_W}}  e^{2\lambda_j t_0}.
\end{equation}
Finally, for any $f \in L^\infty(X,\mu)$ and $t >2 t_0$,~\eqref{E:local_wBECD2_01} and~\eqref{E:local_wBECD2_02} imply
\begin{align*}
| P_t f (x) -P_t f(y) |  &\le  \sum_{j=1}^{+\infty} e^{-\lambda_j t} | \phi_j (x) -\phi_j(y) | \int_X \phi_j(z) f (z) d\mu(z) \\
&  \le   C M \frac{d(x,y)^\kappa}{t_0^{\kappa/d_W}}  \sum_{j=1}^{+\infty} e^{-\lambda_j (t-2t_0)}  \int_X \phi_j(z) f (z) d\mu(z) \\
& \le C M \frac{d(x,y)^\kappa}{t_0^{\kappa/d_W}} \| f \|_{L^\infty(X,\mu)}   \sum_{j=1}^{+\infty} e^{-\lambda_j (t-2t_0)}   \\
& \le C' \frac{d(x,y)^\kappa}{t^{\kappa/d_W}} \| f \|_{ L^\infty(X,\mu)},
\end{align*}
where the constant $C'$ in the last inequality depends on $M,C, \kappa, d_W, \lambda_j$ and $t_0$.
\end{proof}

\section{Examples of heat semigroup  based BV and Sobolev classes}\label{sec: examples}

To illustrate the scope of our results we now present several classes of Dirichlet spaces that appear in the literature for which the heat semigroup based BV and Sobolev classes can be characterized. This generalizes previous results from~\cite{ABCRST1,ABCRST2,ABCRST3}.

\subsection{Metric measure spaces with Gaussian heat kernel estimates}\label{sec:mmsdoubling}

Further details to this particular framework can be found in~\cite{ABCRST2}. 
We consider $(X,d,\mu,\mathcal{E},\mathcal{F})$ to be a strictly local Dirichlet space, where $d$ is the intrinsic metric associated to the Dirichlet form. The measure $\mu$ is assumed to be doubling and the space to supports a scale invariant 2-Poincar\'e inequality on balls; according to K.T. Sturm's results~\cite{Stu95, Stu96} these conditions are equivalent to the fact that there is a heat kernel with Gaussian estimates. In this setting, see~\cite[Lemma 2.11]{ABCRST2}, $\mathcal E$ admits a carr\'e du champ operator $\Gamma(f,f)$, $ f\in \mathcal F$ and we denote $| \nabla f |=\sqrt{\Gamma(f,f)}$. 
Based on the ideas of M. Miranda~\cite{Mir03}, the following definitions were introduced in~\cite{ABCRST2}.

\begin{definition}[BV space]
We say that $f\in L^1(X,\mu)$ is in $BV(X)$ if there is a sequence of local Lipschitz functions $f_k\in L^1(X,\mu)$
such that $f_k\to f$ in $L^1(X,\mu)$ and 
\[
\| Df  \|(X):=\liminf_{k\to\infty}\int_X|\nabla f_k|\, d\mu<\infty.
\]
\end{definition}

\begin{definition}[Sobolev space]
For $p \ge 1$, we define the Sobolev space
\begin{equation}\label{definition:W1p}
W^{1,p}(X):=\left\{ f \in L^p(X,\mu)\cap\mathcal{F}_{\mathrm{loc}}(X)\, :\,   |\nabla f|\in L^p(X) \right\}
\end{equation}
whose norm is given by $\|f\|_{W^{1,p}(X)}=\|f\|_{L^p(X,d\mu)}+\|\,|\nabla f|\,\|_{L^p(X,\mu)}$.
\end{definition}

\begin{theorem}\label{T:BV2}
For each $R \in (0,+\infty]$ the following holds:
\begin{enumerate}[wide=0em,label={\rm (\roman*)}]
\item Assume the weak Bakry-\'Emery estimate
\begin{equation}\label{E:wBE_BV2}
\|\,|\nabla P_tf|\,\|_{L^\infty(X,\mu)}\leq \frac{C}{\sqrt{t}}\|f\|_{L^\infty(X,\mu)}\qquad t \in (0,R)
\end{equation}
for some constant $C>0$ and any $f\in\mathcal{F}\cap L^\infty(X,\mu)$. Then, $(\mathrm {PPI}_1(R))$ is satisfied, $\alpha_1=\frac{1}{2}$, $BV(\mathcal{E})=BV(X)$ and
\[
 \mathbf{Var}_{\mathcal{E}} (f)  \simeq\| f \|_{1,1/2,R}\simeq  \liminf_{r\to 0^+} \int_X\int_{B(x,r)}\frac{|f(y)-f(x)|}{ \sqrt{r}\mu(B(x,r))}\, d\mu(y)\, d\mu(x) \simeq \| Df  \|(X).
\]
\item Assume the \textit{quasi Bakry-\'Emery} condition
estimate, c.f.~\cite[Definition 2.15]{ABCRST2},
\begin{equation}\label{E:sBE_BV2}
|\nabla P_tf|\leq  C P_t|\nabla f|\qquad t \in (0,R)
\end{equation}
$\mu$-a.e. for some constant $C>0$ and any $f\in\mathcal{F}$. Then, for every $p>1$, condition $(\mathrm {PPI}_p(R))$ is satisfied, $\alpha_p=\frac{1}{2}$, $W^{1,p}(\mathcal{E})=W^{1,p}(X)$ and
\[
 \mathbf{Var}_{p,\mathcal{E}} (f)  \simeq \| f \|_{p,1/2,R} \simeq\left(\int_X | \nabla f |^p d\mu\right)^{1/p}.
\]
\end{enumerate}
\end{theorem}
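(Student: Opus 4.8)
The plan is to reduce everything to the semi-norm comparison $\mathbf{Var}_{p,\mathcal{E}}(f)\simeq \|f\|_{p,1/2,R}\simeq(\int_X|\nabla f|^p\,d\mu)^{1/p}$, from which the identification $W^{1,p}(\mathcal E)=W^{1,p}(X)$ (and $BV(\mathcal E)=BV(X)$), the value $\alpha_p=1/2$, and condition $(\mathrm{PPI}_p(R))$ all follow. I would treat part (ii) first since it contains the core argument, then indicate the modifications for part (i).

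\emph{Step 1: upper bound $\|f\|_{p,1/2,R}\lesssim(\int|\nabla f|^p)^{1/p}$.} For $f\in W^{1,p}(X)$ I would estimate $\int_X P_t(|f-f(y)|^p)(y)\,d\mu(y)=\int_X\int_X p_t(x,y)|f(x)-f(y)|^p\,d\mu(x)\,d\mu(y)$ using the Gaussian heat kernel bounds together with the fact that, along the intrinsic metric, $|f(x)-f(y)|\le \int$ (a gradient bound along geodesics or, more robustly, a standard integral estimate $|f(x)-f(y)|^p\lesssim d(x,y)^{p-1}\int_0^{d(x,y)}$-type inequality obtained from the 2-Poincaré inequality / telescoping over balls). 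Combined with the Gaussian off-diagonal decay, integrating in $x$ against $p_t(x,y)$ produces a factor $t^{p/2}$, giving $\int_X P_t(|f-f(y)|^p)(y)\,d\mu(y)\le Ct^{p/2}\int_X|\nabla f|^p\,d\mu$ for $t\in(0,R)$; taking $\sup_{t\in(0,R)}t^{-1/2}(\cdots)^{1/p}$ yields the bound. This also shows $W^{1,p}(X)\subseteq \mathbf{B}^{p,1/2}(X)$, so $\alpha_p\ge 1/2$.

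\emph{Step 2: lower bound and the role of $(\mathrm G_q)$.} The quasi Bakry-Émery estimate \eqref{E:sBE_BV2} gives $|\nabla P_tf|\le CP_t|\nabla f|$, hence $\|\,|\nabla P_tf|\,\|_{L^p}\le C\|\,|\nabla f|\,\|_{L^p}$ and (by $L^\infty$ boundedness of $P_t$ plus interpolation, exactly as in Remark~\ref{R:wBE_to_Gq}) one extracts the localized condition $(\mathrm G_q(R))$ with $\alpha_q=1/2$; then Proposition~\ref{P:Lp_pseudoPI} (its localized version) gives $\|P_tf-f\|_{L^p}\le C(R)t^{1/2}\mathbf{Var}_{p,\mathcal E}(f)$, i.e.\ $(\mathrm{PPI}_p(R))$. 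For the reverse semi-norm comparison I would use the spectral/integration-by-parts identity already exploited in the proof of Proposition~\ref{P:Lp_pseudoPI}: writing $\int_X(f-P_tf)h\,d\mu=\lim_{\tau\to0^+}\int_0^t\mathcal E_\tau(P_sf,h)\,ds$ and using \eqref{E:sBE_BV2} to control $\mathcal E_\tau(P_sf,h)$ by $\int|\nabla f|\,P_s|\nabla h|$ one obtains, after $L^p$-$L^q$ duality, $\|f-P_tf\|_{L^p}\le Ct\int_X|\nabla f|^p$-type control; combining with a lower heat-kernel bound (Gaussian estimates give $p_t(x,y)\gtrsim t^{-\dim/2}$ on $d(x,y)\lesssim\sqrt t$) forces $\mathbf{Var}_{p,\mathcal E}(f)\gtrsim(\int|\nabla f|^p)^{1/p}$ and in particular $\alpha_p\le 1/2$, so $\alpha_p=1/2$. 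Finiteness of $\mathbf{Var}_{p,\mathcal E}(f)$ together with the already-cited closability/approximation results from \cite{ABCRST1,ABCRST2} then upgrades the membership $f\in\mathbf B^{p,1/2}(X)$ to $f\in W^{1,p}(X)$, giving the equality of spaces; the equivalence $\mathbf{Var}_{p,\mathcal E}(f)\simeq\|f\|_{p,1/2,R}$ comes from the general norm-equivalence recalled after \cite[Lemma 4.1]{ABCRST1} together with Lemma~\ref{L:local_norm_approx:a} controlling the $\liminf$ versus $\sup$ discrepancy.

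\emph{Step 3: part (i), $p=1$.} The structure is the same but now only the \emph{weak} Bakry-Émery estimate \eqref{E:wBE_BV2} is assumed, which is exactly the $\kappa=1$, $d_W=2$ case of $(\mathrm G_\infty(R))$; Proposition~\ref{PseudoP} (its localized version) then yields $(\mathrm{PPI}_1(R))$ and $\alpha_1=1/2$. The identification $BV(\mathcal E)=BV(X)$ and the chain of equivalences with $\|Df\|(X)$ and with the Miranda-type limit $\liminf_{r\to0^+}\int\!\!\int_{B(x,r)}\frac{|f(y)-f(x)|}{\sqrt r\,\mu(B(x,r))}\,d\mu(y)\,d\mu(x)$ is then obtained by combining Step 1's heat-kernel-to-ball-average comparison (Gaussian bounds let one pass between $P_t$-averages at scale $t$ and ball-averages at scale $\sqrt t$) with the $BV$ characterization from \cite{ABCRST2}.

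\emph{Main obstacle.} The delicate point is the lower bound in Step 2 — showing $\mathbf{Var}_{p,\mathcal E}(f)$ genuinely dominates $(\int|\nabla f|^p)^{1/p}$ and not merely a weaker $p$-energy — since a crude use of the heat kernel only gives one direction cleanly; the quasi Bakry-Émery inequality \eqref{E:sBE_BV2} is precisely what lets the integration-by-parts identity close, and verifying that the localized $(\mathrm G_q(R))$ and the duality argument interact correctly (in particular that the constants stay uniform for $t\in(0,R)$ and that the $\liminf$ defining $\mathbf{Var}_{p,\mathcal E}$ can be compared to the $\sup$ defining $\|\cdot\|_{p,1/2,R}$) is where the real work lies. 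I expect the rest to follow by assembling the cited results.
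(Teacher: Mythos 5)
There is a genuine gap in your Step 2, and it sits exactly at the point you yourself flag as the ``main obstacle''. The hard direction of part (ii) is the inclusion $\mathbf{B}^{p,1/2}(X)\subset W^{1,p}(X)$ with the bound $\|\,|\nabla f|\,\|_{L^p(X,\mu)}\lesssim \|f\|_{p,1/2,R}$: one must show that a function which is merely in the heat-semigroup Besov class actually possesses a carr\'e du champ gradient in $L^p$. The tool you propose for this --- the duality identity $\int_X(f-P_tf)h\,d\mu=\lim_{\tau\to0^+}\int_0^t\mathcal{E}_\tau(P_sf,h)\,ds$ combined with the quasi Bakry-\'Emery bound \eqref{E:sBE_BV2} --- cannot do it, because controlling $\mathcal{E}_\tau(P_sf,h)$ by $\int_X|\nabla f|\,P_s|\nabla h|\,d\mu$ already presupposes $|\nabla f|\in L^p$; that argument only yields the \emph{opposite} inequality (the pseudo-Poincar\'e bound $\|P_tf-f\|_{L^p}\lesssim \sqrt{t}\,\|\,|\nabla f|\,\|_{L^p}$, i.e.\ $\mathbf{Var}_{p,\mathcal{E}}(f)\lesssim\|\,|\nabla f|\,\|_{L^p}$). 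Likewise, the Gaussian lower heat kernel bound only compares the heat-kernel seminorm with the Korevaar--Schoen ball-average seminorm; it does not produce an $L^p$ gradient. The paper closes this direction by an entirely different mechanism: it builds discrete convolutions $f_t=\sum_i f|_{B_i^t}\varphi_i^t$ with $(C/t)$-Lipschitz partitions of unity, proves the uniform bound $\int_X|\nabla f_{t/2}|^p\,d\mu\lesssim\|f\|_{p,1/2,R}^p$ together with $\|f_t-f\|_{L^p}\to0$, and then uses reflexivity of $L^p$, weak compactness and Mazur's lemma to identify $|\nabla f|\in L^p$ in the limit (following the scheme of Theorem 4.11 in the cited strictly local paper). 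Your appeal to ``closability/approximation results'' is a pointer to exactly the statement that needs proving, not a proof.

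Two secondary points. First, your Step 1 derives $W^{1,p}(X)\subset\mathbf{B}^{p,1/2}(X)$ from the 2-Poincar\'e inequality by telescoping over balls; with only a 2-Poincar\'e inequality the telescoping produces $(M(|\nabla f|^2))^{1/2}$, whose $L^p$ boundedness requires $p>2$ (and fails for $p=1$), so this route does not cover all $p>1$; the paper instead uses the quasi Bakry-\'Emery hypothesis \eqref{E:sBE_BV2} for precisely this direction (and the weak estimate \eqref{E:wBE_BV2} plus the coarea formula for $p=1$). Second, claiming that \eqref{E:sBE_BV2} plus interpolation ``exactly as in Remark~\ref{R:wBE_to_Gq}'' gives $(\mathrm G_q(R))$ conflates a contraction-type estimate with the smoothing estimate that $(\mathrm G_q)$ encodes: you first need quasi Bakry-\'Emery $\Rightarrow$ weak Bakry-\'Emery, and then you must check the exponent caveat in that remark ($\beta_q=1-\alpha_p$), which holds here only because $\alpha_p=1/2$ --- the very fact you are in the middle of proving, so the order of the argument must be rearranged to avoid circularity.
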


\begin{proof}
It suffices to show the statements for non-negative functions.
\begin{enumerate}[wide=0em,label={\rm (\roman*)}]
\item Let $f\in BV(X)$ non-negative. With the same proof as in~\cite[Lemma 4.3]{ABCRST2}, condition~\eqref{E:wBE_BV2} implies that
\begin{equation*}
\|P_tf-f\|_{L^1(X,\mu)}\leq C\sqrt{t}\int_X|\nabla f|\,d\mu
\end{equation*}
for any $t\in (0,R)$. Analogous to the proof of~\cite[Theorem 4.4]{ABCRST2}, the latter inequality and the coarea formula~\cite[Theorem 3.11]{ABCRST2} yield
\[
\frac{1}{\sqrt{t}}\int_X\int_X|f(x)-f(y)|\,p_t(x,y)\,d\mu(y)\,d\mu(x)\leq 2C\|Df\|(X)
\]
for any $t\in (0,R)$ and hence $\mathbf{Var}_{\mathcal{E}} (f)\leq \| f \|_{1,1/2,R}\leq 2C\|Df\|(X)$. Let us now assume $f\in BV(\mathcal{E})$. Due to the Gaussian lower bound of the heat kernel, for any $t\in (0,R)$ we have
\begin{align*}
&\frac{1}{\sqrt{t}}\int_X\int_X|f(x)-f(y)|\,p_t(x,y)\,d\mu(y)\,d\mu(x)\\
&\geq \frac{1}{\sqrt{t}}\int_X\int_X|f(x)-f(y)|\frac{e^{-c\frac{d(x,y)^2}{t}}}{C\mu(B(x,\sqrt{t})}\,d\mu(y)\,d\mu(x)\\
&\geq \frac{1}{\sqrt{t}}\int_X\int_{B(x,\sqrt{t})}|f(x)-f(y)|\frac{e^{-c\frac{d(x,y)^2}{t}}}{C\mu(B(x,\sqrt{t})}\,d\mu(y)\,d\mu(x)\\
&\geq \frac{C}{\sqrt{t}}\int_X\int_{B(x,\sqrt{t})}\frac{|f(x)-f(y)|}{\mu(B(x,\sqrt{t})}\,d\mu(y)\,d\mu(x).
\end{align*}
Taking $\liminf_{t\to 0^+}$ on both sides of the inequality we get
\[
\mathbf{Var}_{\mathcal{E}} (f)\geq \liminf_{t\to 0^+}\frac{C}{\sqrt{t}}\int_X\int_{B(x,\sqrt{t})}\frac{|f(x)-f(y)|}{\mu(B(x,\sqrt{t})}\,d\mu(y)\,d\mu(x)\geq C\|Df\|(X),
\]
where the last inequality follows from the second part of the proof of~\cite[Theorem 3.1]{MMS16} (which does not use 1-Poincar\'e inequality). One now readily gets $\alpha_1=1/2$.
\item
Let $f\in \mathbf{B}^{p,1/2}(X)$. As in the proof of~\cite[Theorem 4.11]{ABCRST2}, for any $t\in (0,R)$ it holds that
\begin{equation}\label{E:BV2_02}
\int_X|\nabla f_{t/2}(x)|^pd\mu(x)\leq \frac{C}{t^p}\int_X\int_{B(x,t)}\frac{|f(x)-f(y)|^p}{\mu(B(x,t))}\,d\mu(y)\,d\mu(x)\leq C\|f\|_{p,1/2,R}^p,
\end{equation}
where $f_t:=\sum_{i\geq 1} f|_{B_i^t}\varphi_i^t$, $\{B_i^t\}_{i\geq 1}$ is a suitable covering of $X$ and $\{\varphi_i^t\}_{i\geq 1}$ a subordinated $(C/t)$-Lipschitz partition of unity. Following further~\cite[Theorem 4.11]{ABCRST2}, we also get
\begin{multline*}
\int_X|f_{t/2}(x)-f(x)|^pd\mu(x)
\leq C t^p\int_X\int_{B(x,t)}\frac{|f(x)-f(y)|^p}{t^p\mu(B(x,t))}\,d\mu(y)\,d\mu(x)\\
\leq C t^p\|f\|_{p,1/2,R}^p
\end{multline*}
which in particular implies $\|f_t-f\|_{L^p(X,\mu)}\to 0$ as $t\to 0$. Let us now consider $\{t_n\}_{n\geq 0}$ with $t_n\to 0$. In view of~\eqref{E:BV2_02}, the sequence $\{|\nabla f_{t_n}|\}_{n\geq 0}$ is uniformly bounded in $L^p$ and since the latter is a reflexive space, we find a subsequence that is weakly convergent in $L^p$. By virtue of Mazur's theorem, see e.g.~\cite[p.120]{Yos95}, one can extract a sequence of convex combinations of $\{|\nabla f_{t_n}|\}_{n\geq 0}$ that converges in $L^p$. The corresponding convex combinations of $\{|f_{t_n}|\}_{n\geq 0}$ thus converge to $f$ on ${W^{1,p}(X)}$ as $n\to\infty$ and hence $\||\nabla f_{t_n}|-|\nabla f|\|_{L^p(X,\mu)}\to 0$. Finally, taking $\liminf_{t\to 0^+}$ in both sides of the first inequality of~\eqref{E:BV2_02} yields 
\[
\|\,|\nabla f|\,\|_{L^p(X,\mu)}\leq C\mathbf{Var}_{p,\mathcal{E}}(f) \leq C \|f\|_{p,1/2,R}.
\] 
To obtain the reverse inequality, let us assume that $f\in L^p(X,\mu)\cap\mathcal{F}$ with $|\nabla f|\in L^p(X,\mu)$. Following verbatim the proof of~\cite[Theorem 4.17]{ABCRST2} with $t\in (0,R)$, the quasi Bakry-\'Emery condition~\eqref{E:sBE_BV2} implies
\[
\frac{1}{\sqrt{t}}\bigg(\int_XP_t(|f-f(x)|^p)(x)\,\mu(dx)\bigg)^{1/p}\leq 2C\|\,|\nabla f|\,\|_{L^p(X,\mu)},
\]
with a constant $C>0$ independent of $R$. Taking $\liminf_{t\to 0^+}$ in both sides of the inequality we obtain $\mathbf{Var}_{p,\mathcal{E}}(f) \leq 2C\|\,|\nabla f|\,\|_{L^p(X,\mu)}$. The result extends to any $f\in W^{1,p}(X)$ exactly as in the proof of~\cite[Theorem 4.17]{ABCRST2} 
and in particular $\alpha_p=1/2$.
\end{enumerate}
\end{proof}

\subsubsection*{About the Bakry-\'Emery conditions}

As one would expect, the quasi Bakry-\'Emery 
curvature condition~\eqref{E:sBE_BV2} implies the weak one~\eqref{E:wBE_BV2}. Examples of spaces within the framework just discussed that satisfy~\eqref{E:sBE_BV2} include Riemannian manifolds with  Ricci curvature bounded from below and   $RCD(K,+\infty)$ spaces; in that case for every $t \ge 0$, $|\nabla P_tf|\leq  e^{-Kt} P_t|\nabla f|$, and thus $|\nabla P_tf|\leq  C P_t|\nabla f|$ for  $t \in (0,R)$ with $C=\max (1, e^{-KR}) $, see~\cite{MR3121635}. On the other hand, Carnot groups~\cite{BB16} and complete sub-Riemannian manifolds with generalized Ricci curvature bounded from below in the sense of~\cite{BG17,BBG14} are examples in this setting where the weak Bakry-\'Emery condition~\eqref{E:wBE_BV2} is known but the stronger condition~\eqref{E:sBE_BV2} unknown.

\subsection{Metric measure spaces  with sub-Gaussian heat kernel estimates}

In this subsection, we consider $(X,d,\mu,\mathcal{E},\mathcal{F})$ to be a strongly local metric Dirichlet space for which 
balls of finite radius have compact closure. In contrast to~\cite{ABCRST3}, the metric measure space $(X,d,\mu)$ need \textit{not} be Ahlfors regular. 
The semigroup $\{P_t\}_{t>0}$ is assumed to have a continuous heat kernel $p_t(x,y)$ satisfying estimates
\begin{multline}\label{E:subGaussian_HKE}
\frac{c_{1}}{\mu(B(x,t^{1/d_W}))} \exp\biggl(-c_{2}\Bigl(\frac{d(x,y)^{d_{W}}}{t}\Bigr)^{\frac{1}{d_{W}-1}}\biggr) \\
\le p_{t}(x,y)\leq  \frac{c_{3}}{\mu(B(x,t^{1/d_W}))}\exp\biggl(-c_{4}\Bigl(\frac{d(x,y)^{d_{W}}}{t}\Bigr)^{\frac{1}{d_{W}-1}}\biggr)
\end{multline}
for $\mu$-a.e. $x,y\in X$ and each $t>0$, where $c_1,c_2, c_3, c_4 >0$ and $d_{W}\geq 2$. The exact values of $c_1,c_2,c_3,c_4$ are irrelevant in our analysis, however the parameter $d_W$, called the walk dimension of the space, will play an important role. In general, when $d_W=2$ one speaks of Gaussian estimates and when $d_W > 2$ of sub-Gaussian estimates. Notice that~\eqref{E:subGaussian_HKE} is also valid when $X$ is compact, as for instance the standard Sierpinski gasket or Vicsek set; in that case, for large times $t$ the ball $B(x,t^{1/d_W})$ fills the space and only the exponential term remains. We also note that~\eqref{E:subGaussian_HKE} implies the estimate~\eqref{E:weak_Harnack}, see~\cite[Lemma 2.3]{ABCRST3}, and that the measure $\mu$ is doubling.

\subsubsection*{The case $p>1$}

The following metric characterization of the Sobolev spaces is available for $p>1$.

\begin{theorem}\label{W1p:I}
For any $p>1$,
\[
W^{1,p}(\mathcal E)=\Big\{ f \in L^p(X,\mu), \,  \limsup_{r\to 0^+} \frac{1}{r^{\alpha_pd_W}}\Big( \int_X\int_{B(x,r)}\frac{|f(y)-f(x)|^p}{ \mu(B(x,r))}\, d\mu(y)\, d\mu(x)\Big)^{1/p} \Big\}.
\]
Moreover, the $p$-variation of any $f\in W^{1,p}(\mathcal E)$ can be bounded by
\begin{equation*}
\mathbf{Var}_{p, \mathcal{E}} (f)   
 \ge c  \liminf_{r\to 0^+} \frac{1}{r^{\alpha_pd_W}}\Big( \int_X\int_{B(x,r)}\frac{|f(y)-f(x)|^p}{ \mu(B(x,r))}\, d\mu(y)\, d\mu(x)\Big)^{1/p} 
\end{equation*}
and
\begin{equation*}
\mathbf{Var}_{p, \mathcal{E}} (f)   
 \le C  \limsup_{r\to 0^+} \frac{1}{r^{\alpha_pd_W}}\Big( \int_X\int_{B(x,r)}\frac{|f(y)-f(x)|^p}{ \mu(B(x,r))}\, d\mu(y)\, d\mu(x)\Big)^{1/p} .
\end{equation*}
\end{theorem}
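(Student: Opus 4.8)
The plan is to derive the metric characterization of $W^{1,p}(\mathcal E)$ by comparing the heat-semigroup quantity $t^{-p\alpha_p}\int_X P_t(|f-f(y)|^p)(y)\,d\mu(y)$ with the ball-averaged quantity $r^{-p\alpha_p d_W}\int_X\int_{B(x,r)}\frac{|f(y)-f(x)|^p}{\mu(B(x,r))}\,d\mu(y)\,d\mu(x)$ under the substitution $r=t^{1/d_W}$. The two-sided sub-Gaussian estimate~\eqref{E:subGaussian_HKE} is the engine: it controls $p_t(x,y)$ from above and below by a Gaussian-type profile against the reference measure $\mu(B(x,t^{1/d_W}))$, and since $\mu$ is doubling this profile can be compared, up to multiplicative constants, with indicator-type kernels supported on balls of radius $r=t^{1/d_W}$.

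\textbf{Step 1 (lower bound for $\mathbf{Var}_{p,\mathcal E}$).} First I would use the lower bound in~\eqref{E:subGaussian_HKE}, restrict the integration to the ball $B(x,t^{1/d_W})$ where the exponential factor is bounded below by a constant, and use the doubling property to replace $\mu(B(x,t^{1/d_W}))$ by $\mu(B(x,r))$ with $r=t^{1/d_W}$. This gives
\[
\int_X P_t(|f-f(y)|^p)(y)\,d\mu(y)\ge c\int_X\int_{B(x,r)}\frac{|f(x)-f(y)|^p}{\mu(B(x,r))}\,d\mu(y)\,d\mu(x),
\]
and dividing by $t^{p\alpha_p}=r^{p\alpha_p d_W}$ and taking $\liminf_{t\to 0^+}$ yields the claimed lower bound on $\mathbf{Var}_{p,\mathcal E}(f)$ and the inclusion of $W^{1,p}(\mathcal E)$ in the metrically defined space (the $\limsup$ being finite follows a fortiori). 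This direction is essentially the argument already used for $p=1$ in the proof of Theorem~\ref{T:BV2}(i), adapted with exponent $p$ and walk dimension $d_W$.

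\textbf{Step 2 (upper bound for $\mathbf{Var}_{p,\mathcal E}$).} For the reverse inequality I would use the upper bound in~\eqref{E:subGaussian_HKE} and decompose the integral over $y$ into dyadic annuli $B(x,2^{j+1}r)\setminus B(x,2^j r)$, $j\ge 0$, with $r=t^{1/d_W}$. On the $j$-th annulus the exponential factor contributes a bound like $\exp(-c\,2^{jd_W/(d_W-1)})$, and the doubling property gives $\mu(B(x,2^jr))\le C\,2^{j\beta}\mu(B(x,r))$ for some exponent $\beta$; the rapidly decaying exponential beats this polynomial growth, so summing over $j$ produces a convergent series. The contribution of each annulus is controlled by the ball-average over $B(x,2^{j+1}r)$, which one rescales back to radius $r$ using a chaining/telescoping estimate for $f_{2^jr}$-type averages (the same covering-and-partition-of-unity device as in~\cite[Theorem 4.11]{ABCRST2} and in the proof of Theorem~\ref{T:BV2}(ii)), at the cost of the factor $2^{j\alpha_p d_W}$; again this is dominated by the exponential. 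Collecting terms and dividing by $t^{p\alpha_p}$ gives
\[
\frac{1}{t^{p\alpha_p}}\int_X P_t(|f-f(y)|^p)(y)\,d\mu(y)\le C\sup_{0<s\le r}\frac{1}{s^{p\alpha_p d_W}}\int_X\int_{B(x,s)}\frac{|f(x)-f(y)|^p}{\mu(B(x,s))}\,d\mu(y)\,d\mu(x),
\]
whence $\mathbf{Var}_{p,\mathcal E}(f)\le C\limsup_{r\to 0^+}(\cdots)$ after taking $\liminf$ in $t$ and using that the right side is, up to the localization $s\le r$, monotone in $r$; the equivalence of localized and global semi-norms recorded after the definition of $\mathbf B^{p,\alpha}(X)$ lets one pass freely between the two.

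\textbf{Main obstacle.} The delicate point is the upper bound, Step 2: one must show that the dyadic-annuli sum converges, i.e. that the sub-Gaussian decay $\exp(-c\,2^{jd_W/(d_W-1)})$ dominates both the doubling volume growth $2^{j\beta}$ and the rescaling factor $2^{j\alpha_p d_W}$ coming from comparing a ball-average at radius $2^jr$ with one at radius $r$; this requires a clean telescoping bound relating averages at different scales, which is where the reflexivity/Mazur-type approximation of $W^{1,p}(X)$ used in Theorem~\ref{T:BV2}(ii) (or the direct chaining argument) has to be invoked carefully and uniformly in $r$. The $p=1$ case is genuinely easier because the coarea formula is available; for $p>1$ the substitute is the partition-of-unity smoothing and the scale-comparison estimate, and making those interact correctly with the localized semi-norms $\|\cdot\|_{p,\alpha_p,R}$ is the technical heart of the proof.
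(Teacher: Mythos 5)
Your Step 1 is exactly the paper's argument (lower sub-Gaussian bound, restrict to $B(y,s)$, take $t=s^{d_W}$), so that part is fine. The gap is in Step 2. First, the displayed inequality you claim, with $\sup_{0<s\le r}$ on the right-hand side, cannot come out of your decomposition: your dyadic annuli sit at scales $2^jr\ge r$, so whatever bound they give involves the Korevaar--Schoen functional at scales \emph{larger} than $r$, not smaller. Second, and more seriously, the step where you ``rescale back to radius $r$ \ldots at the cost of the factor $2^{j\alpha_p d_W}$'' is precisely the missing ingredient, and the tools you invoke do not supply it: the covering/partition-of-unity construction of \cite[Theorem 4.11]{ABCRST2} and the reflexivity/Mazur argument belong to the strictly local Gaussian setting of Theorem~\ref{T:BV2}(ii), where a carr\'e du champ exists; in the present strongly local sub-Gaussian setting (which includes fractals) there is no gradient, Mazur's lemma plays no role, and an inequality bounding the ball-average of $|f(x)-f(y)|^p$ at radius $2^jr$ by the one at radius $r$ times a polynomial factor is a nontrivial chaining/Poincar\'e-type statement that is neither proved nor needed anywhere in the paper. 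As written, your Step 2 therefore does not close.

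The good news is that your dyadic idea can be repaired without any scale comparison: keep the Korevaar--Schoen quantity at scale $2^jr$ as it stands, use doubling to produce the polynomial factor $2^{j\beta}2^{jp\alpha_p d_W}$, which the sub-Gaussian factor $\exp\bigl(-c\,2^{jd_W/(d_W-1)}\bigr)$ kills, note that for each fixed $j$ the scale $2^jt^{1/d_W}\to 0$ as $t\to 0^+$, and use the uniform summability of the series (finiteness of the $\limsup$ plus the trivial bound $2^p\|f\|_{L^p(X,\mu)}^p s^{-p\alpha_p d_W}$ at large scales) to pass $\limsup_{t\to 0^+}$ inside the sum. The paper takes a different and shorter route that avoids annuli altogether: it splits at the single scale $r=\delta t^{1/d_W}$, bounds the near-diagonal part by the Korevaar--Schoen functional $\Phi(t)$ at scale $\delta t^{1/d_W}$, and bounds the off-diagonal part by $A_\delta\,\Psi(ct)$, where $\Psi$ is the heat-semigroup functional itself; this uses that for $d(x,y)>\delta t^{1/d_W}$ the upper bound in~\eqref{E:subGaussian_HKE} lets one trade part of the exponential for the kernel at time $ct$, with $A_\delta\to 0$ as $\delta\to\infty$. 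Taking $\limsup_{t\to 0^+}$ in $\Psi(t)\le\Phi(t)+A_\delta\Psi(ct)$ and choosing $\delta$ large absorbs the $\Psi$-term and yields both the upper bound on $\mathbf{Var}_{p,\mathcal{E}}$ and the reverse inclusion. That self-improvement/absorption trick is what replaces the scale-comparison lemma you were trying to build.
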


\begin{proof}
By virtue of the sub-Gaussian lower estimate~\eqref{E:subGaussian_HKE}, for any $s,t>0$ and $\alpha>0$ we have
\begin{align*}
 \int_X \int_X & |f(x)-f(y) |^p p_t (x,y) d\mu(x) d\mu(y) \\
 \ge & \int_X \int_{B(y,s)} |f(x)-f(y) |^p p_t (x,y) d\mu(x) d\mu(y) \\
 \ge &c_{1} \int_X \int_{B(y,s)} \frac{ |f(x)-f(y) |^p}{\mu(B(y,t^{1/d_W}))}  \exp\biggl(-c_{2}\Bigl(\frac{d(x,y)^{d_{W}}}{t}\Bigr)^{\frac{1}{d_{W}-1}}\biggr) d\mu(x) d\mu(y) \\
 \ge & c_{1} \exp\biggl(-c_{2}\Bigl(\frac{s^{d_{W}}}{t}\Bigr)^{\frac{1}{d_{W}-1}}\biggr) \int_X \int_{B(y,s)}\frac{ |f(x)-f(y) |^p}{\mu(B(y,t^{1/d_W}))}  d\mu(x) d\mu(y).
\end{align*}
Choosing $t=s^{d_W}$ and dividing on both sides of the inequality by $s^{p\alpha_pd_W}$ lead to
\begin{equation}\label{E:W1p:I_01}
\frac{1}{s^{p\alpha_p d_W}} \int_X \int_{B(y,s)} \frac{ |f(x)-f(y) |^p}{\mu(B(y,s))}  d\mu(x) d\mu(y) \le C \frac{1}{s^{p \alpha_p d_W}} \int_X \int_X  |f(x)-f(y) |^p p_{s^{d_W}} (x,y) d\mu(x) d\mu(y)
\end{equation}
which in view of the definition of $W^{1,p}(\mathcal E)$ implies
\[
W^{1,p}(\mathcal E)\subset \Big\{ f \in L^p(X,\mu), \,  \limsup_{r\to 0^+} \frac{1}{r^{\alpha_pd_W}}\Big( \int_X\int_{B(x,r)}\frac{|f(y)-f(x)|^p}{ \mu(B(x,r))}\, d\mu(y)\, d\mu(x)\Big)^{1/p}<\infty \Big\}.
\]
Moreover, taking $\liminf_{s \to 0^+}$ on both sides of~\eqref{E:W1p:I_01} yields the lower bound
\[
\mathbf{Var}_{p, \mathcal{E}} (f)   \ge c
 \liminf_{r\to 0^+} \frac{1}{r^{\alpha_pd_W}}\Big( \int_X\int_{B(x,r)}\frac{|f(y)-f(x)|^p}{ \mu(B(x,r))}\, d\mu(y)\, d\mu(x)\Big)^{1/p}.
\]
The converse estimate is more difficult to prove and we shall argue somewhat similarly to the proof of~\cite[Lemma 4.13]{ABCRST3}.
Let us denote
\begin{equation*}
\Psi (t):=\frac{1}{ t^{p\alpha_p}}\int_X\int_X p_{t}(x,y) |f(x)-f(y)|^p \,d\mu(x)\,d\mu(y)
\end{equation*}
and proceed as follows: Fix $\delta>0$ and set $r=\delta t^{1/d_W}$. For $d(x,y) \le \delta t^{1/d_W}$ the sub-Gaussian upper bound~\eqref{E:subGaussian_HKE} implies $p_t(x,y)\leq \frac{C}{\mu(B(x,t^{1/d_W}))}$, so that
\begin{multline*}
\frac{1}{ t^{p\alpha_p}}\int_X\int_{B(y,r)} p_{t}(x,y) |f(x)-f(y)|^p \,d\mu(x)\,d\mu(y) \\	
\leq \frac{C}{t^{p\alpha_p}} \int_X\int_{B(y,\delta t^{1/d_W})} \frac{ |f(x)-f(y) |^p}{\mu(B(x,\delta t^{1/d_W}))}  \,d\mu(x)\,d\mu(y)
:=\Phi(t).
\end{multline*}
For $d(x,y)>\delta t^{1/d_W}$, we instead use the sub-Gaussian upper bound to see there are $c,C>1$ (independent of $\delta$) such that
\begin{equation*}
p_t(x,y)\leq C\exp\biggl( -\Bigl(\frac{c_4}2\Bigr)\Bigl(\frac{d(x,y)^{d_W}}t\Bigr)^{\frac1{d_W-1}}\biggr) p_{ct}(x,y)
\leq C\exp\bigl(-c' \delta^{\frac{d_W}{d_W-1}}\bigr) p_{ct}(x,y).
\end{equation*}
Therefore,
\begin{align}\label{ref:op}
	\Psi(t)
	&\leq \Phi(t) + \frac{1}{ t^{p\alpha_p}}\int_X\int_{X\setminus B(y,r)} p_{t}(x,y) |f(x)-f(y)|^p \,d\mu(x)\,d\mu(y)\notag\\
	&\leq \Phi(t)+ \frac{C}{t^{p\alpha_p}}\exp\bigl(-c' \delta^{\frac{d_W}{d_W-1}}\bigr) \int_X\int_{X\setminus B(y,r)} p_{ct}(x,y) |f(x)-f(y)|^p \,d\mu(x)\,d\mu(y)\notag\\
	&\leq \Phi(t)+ \frac{C}{t^{p\alpha_p}}\exp\bigl(-c' \delta^{\frac{d_W}{d_W-1}}\bigr) \int_X\int_{X} p_{ct}(x,y) |f(x)-f(y)|^p \,d\mu(x)\,d\mu(y)\notag \\
	&= \Phi(t) + A_\delta \Psi(ct),
\end{align}
where $A_\delta$ is a constant that can be made as small as we desire by choosing $\delta$ large enough. Letting first $t \to 0^+$ one gets
\begin{multline*}
 \limsup_{t \to 0^+} \frac{1}{ t^{p\alpha_p}}\int_X\int_{X} p_{t}(x,y) |f(x)-f(y)|^p \,d\mu(x)\,d\mu(y) \\
 \le C \delta^{p \alpha_p d_W} \limsup_{r\to 0^+} \frac{1}{r^{p \alpha_pd_W}} \int_X\int_{B(x,r)}\frac{|f(y)-f(x)|^p}{ \mu(B(x,r))}\, d\mu(y)\, d\mu(x) \\
  + CA_\delta \limsup_{t \to 0^+} \frac{1}{ t^{p\alpha_p}}\int_X\int_{X} p_{t}(x,y) |f(x)-f(y)|^p \,d\mu(x)\,d\mu(y) 
\end{multline*}
and choosing $\delta$ large enough the conclusion follows.
\end{proof}

The case $p=2$ is special and allows to improve and extend the previous result.

\begin{theorem}\label{T:BV3-p=2}
The property $(\mathrm {PPI}_2)$ is always satisfied, $\alpha_2=1/2$, and the following equivalences of semi-norms is valid on $ W^{1,2}(\mathcal E)$:
\begin{align*}
\mathbf{Var}_{2,\mathcal{E}} (f)   &\simeq \mathcal{E}(f,f) \\
& \simeq \liminf_{r\to 0^+} \frac{1}{r^{d_W/2}}\left( \int_X\int_{B(x,r)}\frac{|f(y)-f(x)|^2}{ \mu(B(x,r))}\, d\mu(y)\, d\mu(x)\right)^{1/2}\\
 & \simeq \sup_{r >0}\frac{1}{r^{d_W/2}}\left( \int_X\int_{B(x,r)}\frac{|f(y)-f(x)|^2}{ \mu(B(x,r))}\, d\mu(y)\, d\mu(x)\right)^{1/2} \\
 &\simeq \| f \|_{2,1/2}
\end{align*}
\end{theorem}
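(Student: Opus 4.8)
The plan is to reduce the entire statement to spectral calculus for the generator of $\mathcal{E}$ together with the sub-Gaussian bounds~\eqref{E:subGaussian_HKE} and the doubling of $\mu$. The starting point is the identity (valid since $\{P_t\}$ is conservative): for $f\in L^2(X,\mu)$,
\[
\int_X P_t(|f-f(y)|^2)(y)\,d\mu(y)=2\int_X f(I-P_t)f\,d\mu = 2t\,\mathcal{E}_t(f,f),
\]
with $\mathcal{E}_t$ as in the proof of Proposition~\ref{P:Lp_pseudoPI}. By the spectral theorem $t\mapsto\mathcal{E}_t(f,f)$ is nonincreasing in $t$ and tends to $\mathcal{E}(f,f)$ as $t\to0^+$; in particular $W^{1,2}(\mathcal{E})=\mathbf{B}^{2,1/2}(X)=\mathcal{F}$, $\alpha_2=1/2$, and
\[
\mathbf{Var}_{2,\mathcal{E}}(f)^2=\liminf_{t\to0^+}2\mathcal{E}_t(f,f)=2\mathcal{E}(f,f)=\sup_{t>0}2\mathcal{E}_t(f,f)=\|f\|_{2,1/2}^2,
\]
which yields at once $\mathbf{Var}_{2,\mathcal{E}}(f)^2= 2\mathcal{E}(f,f)$ and $\mathbf{Var}_{2,\mathcal{E}}(f)=\|f\|_{2,1/2}$. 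Feeding the elementary inequality $(1-e^{-t\lambda})^2\le t\lambda$ into the spectral representation of $\|P_tf-f\|_{L^2}^2$ gives $\|P_tf-f\|_{L^2}^2\le t\,\mathcal{E}(f,f)$, i.e.\ $(\mathrm{PPI}_2)$; equivalently one invokes $(\mathrm G_2)$, which holds by the same computation, together with Proposition~\ref{P:Lp_pseudoPI}.

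It then remains to splice the Korevaar--Schoen quantity $g(r):=\frac1{r^{d_W}}\int_X\int_{B(x,r)}\frac{|f(y)-f(x)|^2}{\mu(B(x,r))}\,d\mu(y)\,d\mu(x)$ into this chain. For the upper estimates $\liminf_{r\to0^+}g(r)\le\sup_{r>0}g(r)\lesssim\mathbf{Var}_{2,\mathcal{E}}(f)^2$ I would use only the sub-Gaussian lower bound: if $y\in B(x,r)$ and $t=r^{d_W}$ then $p_t(x,y)\ge c\,\mu(B(x,r))^{-1}$, hence
\[
g(r)\le \frac{c^{-1}}{t}\int_X\int_X|f(x)-f(y)|^2p_t(x,y)\,d\mu(x)\,d\mu(y)=2c^{-1}\mathcal{E}_t(f,f)\le 2c^{-1}\mathcal{E}(f,f)
\]
for every $r>0$; taking the supremum over $r$ and recalling $\mathbf{Var}_{2,\mathcal{E}}(f)^2=2\mathcal{E}(f,f)$ closes this side.

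The substantial direction is $\mathbf{Var}_{2,\mathcal{E}}(f)^2\lesssim\liminf_{r\to0^+}g(r)$, and here I would run the argument in the proof of Theorem~\ref{W1p:I} specialized to $p=2$, now exploiting the genuine convergence $\mathcal{E}_t(f,f)\to\mathcal{E}(f,f)$ rather than the mere $\liminf/\limsup$ control available for general $p$. Writing $\Psi(t):=2\mathcal{E}_t(f,f)$, the sub-Gaussian upper bound together with doubling yields, exactly as in~\eqref{ref:op}, constants $c>1$ and $A_\delta\downarrow0$ (as $\delta\to\infty$) with
\[
\Psi(t)\le C\delta^{d_W}g(\delta t^{1/d_W})+A_\delta\,\Psi(ct)\qquad(t>0).
\]
Taking $\liminf_{t\to0^+}$ and using that $\Psi(t)$ and $\Psi(ct)$ share the finite limit $2\mathcal{E}(f,f)$ (finite since $f\in W^{1,2}(\mathcal{E})=\mathcal{F}$) gives $2\mathcal{E}(f,f)\le C\delta^{d_W}\liminf_{r\to0^+}g(r)+A_\delta\,2\mathcal{E}(f,f)$; fixing $\delta$ large enough that $A_\delta\le\tfrac12$ and rearranging yields $\mathbf{Var}_{2,\mathcal{E}}(f)^2=2\mathcal{E}(f,f)\le C'\liminf_{r\to0^+}g(r)$. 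Combined with the previous paragraph this gives $\mathbf{Var}_{2,\mathcal{E}}(f)\simeq\liminf_{r\to0^+}g(r)^{1/2}\simeq\sup_{r>0}g(r)^{1/2}$, and with the first paragraph all the asserted equivalences follow. The one delicate point — and the only place where $p=2$ is genuinely used — is the legitimacy of absorbing the term $A_\delta\Psi(ct)$ into the left-hand side: this relies on the existence of $\lim_{t\to0^+}\Psi(t)$, which comes from the monotonicity of $t\mapsto\mathcal{E}_t(f,f)$ and has no counterpart for $p\neq 2$.
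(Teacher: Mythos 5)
Your proposal is correct and reaches the same endpoints as the paper, but it is usefully more self-contained: where the paper simply cites~\cite{ABCRST1} for $\alpha_2=1/2$, $(\mathrm{PPI}_2)$ and $\mathbf{Var}_{2,\mathcal{E}}(f)\simeq\mathcal{E}(f,f)\simeq\|f\|_{2,1/2}$, you derive all of this from the identity $\int_X P_t(|f-f(y)|^2)(y)\,d\mu(y)=2t\,\mathcal{E}_t(f,f)$ (valid by symmetry and conservativeness) together with the spectral monotonicity $\mathcal{E}_t(f,f)\uparrow\mathcal{E}(f,f)$; and where the paper invokes~\cite{ABCRST3} for the equivalence with $\sup_{r>0}g(r)$, you obtain the bound $\sup_{r>0}g(r)\lesssim\mathcal{E}(f,f)$ directly from the sub-Gaussian lower bound in~\eqref{E:subGaussian_HKE} and $\mathcal{E}_t\le\mathcal{E}$. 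For the one genuinely delicate direction, $\mathbf{Var}_{2,\mathcal{E}}(f)^2\lesssim\liminf_{r\to0^+}g(r)$, you run exactly the decomposition~\eqref{ref:op} with $p=2$ that the paper points to, and your explicit use of the existence of $\lim_{t\to0^+}\Psi(t)=2\mathcal{E}(f,f)<\infty$ to split $\liminf_{t\to 0^+}\bigl(\Phi(t)+A_\delta\Psi(ct)\bigr)$ and then absorb $A_\delta\Psi(ct)$ is precisely the justification the paper compresses into ``letting $t\to0^+$ and choosing $\delta$ large enough''; correctly identifying that this absorption has no analogue for $p\neq2$ is the right diagnosis. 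The only point you should add (one line) to fully justify $\alpha_2=1/2$ is that for $\alpha>1/2$ membership in $\mathbf{B}^{2,\alpha}(X)$ forces $\mathcal{E}(f,f)=\lim_{t\to0^+}\mathcal{E}_t(f,f)=0$, and then the strict positivity of the heat kernel lower bound in~\eqref{E:subGaussian_HKE} forces $f$ to be a.e.\ constant, so that $\mathbf{B}^{2,\alpha}(X)$ contains only a.e.\ constant functions; with that remark the argument is complete.
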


\begin{proof}
The fact that $\alpha_2=1/2$ is proved in~\cite[Proposition 5.6]{ABCRST1}, which together with~\cite[Lemma 4.20]{ABCRST1} yields property $(\mathrm {PPI}_2)$.
Now, the equivalence of semi-norms
\begin{equation*}
\mathbf{Var}_{2,\mathcal{E}} (f) \simeq \mathcal{E}(f,f) \simeq \| f \|_{2,1/2}
\end{equation*}
follows from~\cite[Proposition 4.6]{ABCRST1} and
\begin{equation*}
\mathbf{Var}_{2,\mathcal{E}} (f) \simeq \sup_{r >0}\frac{1}{r^{d_W/2}}\left( \int_X\int_{B(x,r)}\frac{|f(y)-f(x)|^2}{ \mu(B(x,r))}\, d\mu(y)\, d\mu(x)\right)^{1/2}
\end{equation*}
from~\cite[Theorem 2.4]{ABCRST3}. 
Notice that in the framework of~\cite{ABCRST3}, Ahlfors regularity is assumed, however the proof of~\cite[Theorem 2.4]{ABCRST3} can be generalized using the estimates \eqref{E:subGaussian_HKE} since they imply the doubling property of the measure. To conclude, it remains to prove that
\[
\mathbf{Var}_{2,\mathcal{E}} (f)  \simeq \liminf_{r\to 0^+} \frac{1}{r^{d_W/2}}\left( \int_X\int_{B(x,r)}\frac{|f(y)-f(x)|^2}{ \mu(B(x,r))}\, d\mu(y)\, d\mu(x)\right)^{1/2}.
\]
The lower bound is obtained in Theorem~\ref{W1p:I}, whereas the upper bound
\[
\mathbf{Var}_{2,\mathcal{E}} (f)  \leq c \liminf_{r\to 0^+} \frac{1}{r^{d_W/2}}\left( \int_X\int_{B(x,r)}\frac{|f(y)-f(x)|^2}{ \mu(B(x,r))}\, d\mu(y)\, d\mu(x)\right)^{1/2}
\]
follows from~\eqref{ref:op} with $p=2$, by letting $t \to 0^+$ and choosing $\delta$ large enough.
\end{proof}

\subsubsection*{The case $p=1$}
For completeness, and to include underlying spaces that are compact or negatively curved RCD spaces, we finish this section with a local version of the main results obtained in in~\cite{ABCRST3} and refer to the latter for further properties of BV functions in this setting.

\begin{theorem}\label{T:BV3}
Let $R\in (0,+\infty]$ and assume $(\mathrm G_\infty(R))$, i.e. there exists a constant $C>0$ such that for every $t \in (0,R)$, $f\in L^\infty(X,\mu)$ and $x,y \in X$,
\begin{equation}\label{E:wBECD}
| P_t f (x)-P_t f(y)| \le C \frac{d(x,y)^{d_W( 1-\alpha_1)}}{t^{1-\alpha_1}} \| f \|_{ L^\infty(X,\mu)}.
\end{equation}
Then, $(\mathrm {PPI}_1(R))$ is satisfied and the following equivalences of semi-norms is valid on $ BV(\mathcal E)$:
\begin{align*}
\mathbf{Var}_{\mathcal{E}} (f)   
& \simeq \liminf_{r\to 0^+} \int_X\int_{B(x,r)}\frac{|f(y)-f(x)|}{ r^{\alpha_1d_W}\mu(B(x,r))}\, d\mu(y)\, d\mu(x) \\
 & \simeq \sup_{r\in (0,R)} \int_X\int_{B(x,r)}\frac{|f(y)-f(x)|}{ r^{\alpha_1d_W}\mu(B(x,r))}\, d\mu(y)\, d\mu(x) \\
 &\simeq \| f \|_{1,\alpha_1,R}.
\end{align*}
\end{theorem}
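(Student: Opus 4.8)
The plan is to run the same scheme as in Theorem~\ref{W1p:I} with $p=1$, but now exploiting the full strength of the weak Bakry-\'Emery estimate \eqref{E:wBECD} rather than merely the sub-Gaussian lower bound, in order to get two-sided control and the supremum-type characterization. First I would note that $(\mathrm{PPI}_1(R))$ follows directly from the localized analogue of Proposition~\ref{PseudoP}: indeed $(\mathrm G_\infty(R))$ together with the localized version of \eqref{E:weak_Harnack} (valid here because \eqref{E:subGaussian_HKE} holds) yields, by the proof of~\cite[Proposition 3.10]{ABCRST3},
\[
\| P_t f - f\|_{L^1(X,\mu)} \le C t^{\alpha_1} \mathbf{Var}_\mathcal{E}(f), \qquad t \in (0,R).
\]
Combined with the general inequality relating $\|f\|_{1,\alpha_1,R}$ and $\mathbf{Var}_\mathcal{E}(f)$ recalled after~\cite[Lemma 4.1]{ABCRST1}, this already gives the comparison $\mathbf{Var}_\mathcal{E}(f)\simeq \|f\|_{1,\alpha_1,R}$ on $BV(\mathcal E)$, which is the last equivalence in the statement; here one should also invoke the coarea-type argument or the chain of inequalities as in Theorem~\ref{T:BV2}(i) to pass between the localized Besov seminorm and the averaged local integral $\sup_{r\in(0,R)}\int_X\int_{B(x,r)}\frac{|f(y)-f(x)|}{r^{\alpha_1 d_W}\mu(B(x,r))}\,d\mu(y)\,d\mu(x)$.

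For the remaining equivalences I would proceed exactly as in the proof of Theorem~\ref{W1p:I} specialized to $p=1$. The lower bound
\[
\mathbf{Var}_\mathcal{E}(f)\ \ge\ c\,\liminf_{r\to 0^+}\int_X\int_{B(x,r)}\frac{|f(y)-f(x)|}{r^{\alpha_1 d_W}\mu(B(x,r))}\,d\mu(y)\,d\mu(x)
\]
comes directly from the sub-Gaussian \emph{lower} bound in \eqref{E:subGaussian_HKE}: restricting the double heat-kernel integral to $d(x,y)\le s$, bounding the Gaussian-type factor below by a constant, choosing $t=s^{d_W}$, dividing by $s^{\alpha_1 d_W}$ (note $\alpha_1 d_W = (1-\alpha_1)^{-1}\cdot$—more simply, here $s^{\alpha_1 d_W}$ with $\alpha_1 d_W$ the correct scaling so that $\| \cdot\|_{1,\alpha_1}$ has exponent $\alpha_1$) and passing to the $\liminf$. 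For the reverse (upper) bound I would reuse the splitting \eqref{ref:op}: with $\Psi(t)=t^{-\alpha_1}\int_X\int_X p_t(x,y)|f(x)-f(y)|\,d\mu(x)\,d\mu(y)$, split the inner integral at the ball $B(y,\delta t^{1/d_W})$; on the ball use the sub-Gaussian \emph{upper} bound $p_t(x,y)\le C/\mu(B(x,t^{1/d_W}))$ to dominate by $\Phi(t)$, the local averaged integral; off the ball use the standard comparison $p_t(x,y)\le C\exp(-c'\delta^{d_W/(d_W-1)})p_{ct}(x,y)$ to get a term $A_\delta \Psi(ct)$ with $A_\delta\to 0$ as $\delta\to\infty$. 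Letting $t\to 0^+$ and choosing $\delta$ large absorbs the $\Psi$-term, giving $\mathbf{Var}_\mathcal{E}(f)\le c\,\liminf_{r\to 0^+}(\text{local integral})$, and the same computation with $\limsup$ handles the supremum-type seminorm.

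The main obstacle is not the structure — which is a faithful copy of the $p>1$ argument — but rather verifying that, since the space is no longer assumed Ahlfors regular (only $\mu$ doubling), all the heat-kernel manipulations still go through: in particular that $\mu(B(x,t^{1/d_W}))$ and $\mu(B(y,t^{1/d_W}))$ are comparable (doubling, plus $d(x,y)\lesssim t^{1/d_W}$ on the relevant region), and that the exponential-tail comparison constants are uniform in $x,y$. A second, more delicate point is the correct bookkeeping of the scaling exponent: one must confirm that $\alpha_1$ defined as the critical Besov exponent is compatible with the exponent $d_W(1-\alpha_1)$ appearing in \eqref{E:wBECD}, i.e.\ that the pseudo-Poincar\'e and the weak Bakry-\'Emery estimate feed into each other with matching powers of $t$ and $r$; this is where one genuinely uses that $(\mathrm G_\infty(R))$ is stated with the specific Hölder exponent $d_W(1-\alpha_1)$ rather than an arbitrary one. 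Once these compatibility checks are in place, the chain of equivalences closes exactly as above.
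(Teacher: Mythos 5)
Your outline reproduces the easy halves of the argument (the pseudo-Poincar\'e inequality via the localized Proposition~\ref{PseudoP}, and the lower bound $\mathbf{Var}_\mathcal{E}(f)\gtrsim \liminf$ of the local averaged integral from the sub-Gaussian lower bound), but it skips over the heart of the theorem. The claim that $\mathbf{Var}_{\mathcal{E}}(f)\simeq \|f\|_{1,\alpha_1,R}$ ``already'' follows from $(\mathrm{PPI}_1(R))$ combined with the inequality recalled after \cite[Lemma 4.1]{ABCRST1} is not correct: that inequality only compares the localized seminorm $\|f\|_{1,\alpha_1,R}$ with the global one $\|f\|_{1,\alpha_1}$ (up to an $L^1$ term), and $(\mathrm{PPI}_1(R))$ only controls $\|P_tf-f\|_{L^1}$, which is \emph{weaker} than the quantity $\int_X P_t(|f-f(x)|)(x)\,d\mu(x)$ appearing in the seminorm — the pointwise bound $|P_tf(x)-f(x)|\le P_t(|f-f(x)|)(x)$ goes in the wrong direction. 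The genuinely hard direction, namely
\[
\sup_{t\in(0,R)} t^{-\alpha_1}\int_X P_t(|f-f(x)|)(x)\,d\mu(x)\ \le\ C\,\mathbf{Var}_{\mathcal{E}}(f),
\]
i.e. bounding a supremum over all small times by a $\liminf$, is exactly what the paper obtains by applying the pseudo-Poincar\'e inequality \emph{as in}~\cite[Lemma 4.12]{ABCRST3}, an argument that uses the weak Bakry-\'Emery estimate \eqref{E:wBECD} in an essential way; it cannot be replaced by the triangle-type comparison of seminorms, nor by the ``coarea-type argument as in Theorem~\ref{T:BV2}(i)'', which lives in the strictly local Gaussian setting with a carr\'e du champ and has no analogue here.

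Two further points. For the converse bound $\mathbf{Var}_\mathcal{E}(f)\le C\liminf_{r\to0^+}(\cdots)$ you propose rerunning the splitting \eqref{ref:op} with $p=1$; note that the absorption of the term $A_\delta\Psi(ct)$ works cleanly for a $\limsup$, while for the $\liminf$ you would additionally need $\limsup_{t\to0^+}\Psi(t)\lesssim\mathbf{Var}_\mathcal{E}(f)$ — precisely the step missing above — or the self-contained argument of~\cite[Lemma 4.13]{ABCRST3}, which the paper cites and which uses only the heat kernel estimates. Finally, the equivalence with $\sup_{r\in(0,R)}$ of the local integral requires the change of variables $t=r^{d_W}$, and when $R\ge 1$ one needs the seminorm $\|f\|_{1,\alpha_1,R^{d_W}}$, i.e. the estimate \eqref{E:wBECD} on the larger interval $(0,R^{d_W})$; the paper handles this via the self-improvement Lemma~\ref{L:local_wBECD1} and a case distinction $R<1$ versus $R\ge1$, which your sketch does not address.
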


\begin{proof}
By virtue of~\eqref{E:wBECD}, the local pseudo Poincar\'e inequality
\begin{equation*}
\| P_t f - f\|_{L^1(X,\mu)} \le C t^{\alpha_1} \mathbf{Var}_\mathcal{E} (f)
\end{equation*}
holds for any $t\in (0,R)$. Applying the latter as in~\cite[Lemma 4.12]{ABCRST3} yields
\begin{equation*}
\frac{1}{t^{\alpha_1}}\int_{X}P_t(|f-f(x)|(x)\,d\mu(x)\leq C\mathbf{Var}_\mathcal{E} (f)
\end{equation*}
and taking $\sup_{t\in (0,R)}$ on both sides we obtain 
$\| f \|_{1,\alpha_1,R}\simeq \mathbf{Var}_\mathcal{E} (f)$.
On the other hand, the lower heat kernel bound~\eqref{E:subGaussian_HKE} implies
\begin{equation*}
t^{-\alpha_1}\int_XP_t(|f-f(x)|)(x)\,d\mu(x)\geq c_1e^{c_2}\int_X\int_{B(x,t^{1/d_W})}\frac{|f(x)-f(y)|}{t^{\alpha_1}\mu(B(x,t^{1/d_W})}d\mu(x),
\end{equation*}
which setting $r=t^{1/d_W}$ reads
\begin{equation}\label{E:BV3_04}
t^{-\alpha_1}\int_XP_t(|f-f(x)|)(x)\,d\mu(x)\geq c_1e^{c_2}\int_X\int_{B(x,r)}\frac{|f(x)-f(y)|}{r^{\alpha_1 d_W}\mu(B(x,r))}d\mu(x).
\end{equation}
Taking $\liminf_{t\to 0^+}$ on both sides of the inequality, that is tantamount to taking $\liminf_{r\to 0^+}$ on the left hand side, we obtain 
\[
\mathbf{Var}_\mathcal{E} (f)\geq \liminf_{r\to 0^+}\int_X\int_{B(x,r)}\frac{|f(x)-f(y)|}{r^{\alpha_1 d_W}\mu(B(x,r))}d\mu(x).
\]
The converse inequality is proved in~\cite[Lemma 4.13]{ABCRST3} and requires only the heat kernel estimates~\eqref{E:subGaussian_HKE}, in particular no Bakry-\'Emery estimate, hence
\begin{equation}\label{E:BV3_05}
\mathbf{Var}_\mathcal{E} (f)\simeq \liminf_{r\to 0^+}\int_X\int_{B(x,r)}\frac{|f(x)-f(y)|}{r^{\alpha_1 d_W}\mu(B(x,r)})d\mu(x).
\end{equation}
Let us now consider $R\in (0,1)$. Taking $\sup_{r\in (0,R)}$ on both sides of~\eqref{E:BV3_04} while noticing that $r\in(0,R)$ implies $t=r^{d_W}\in (0,R^{d_W})\subset (0,R)$ we get
\begin{multline*}
\sup_{r\in (0,R)}\int_X\int_{B(x,r)}\frac{|f(x)-f(y)|}{r^{\alpha_1 d_W}\mu(B(x,r))}d\mu(x)
\leq C_1 \|f\|_{1,\alpha_1,R}\leq C_1 \mathbf{Var}_\mathcal{E} (f)\\
\leq C_2\sup_{r\in (0,R)}\int_X\int_{B(x,r)}\frac{|f(x)-f(y)|}{r^{\alpha_1 d_W}\mu(B(x,r))}d\mu(x),
\end{multline*}
where the last inequality follows from~\eqref{E:BV3_05} and the constants do not depend on $R$.
If $R\geq 1$,  we have that~\eqref{E:wBECD} holds for any $t>0$ and in particular for $t\in (0,R^{d_W})$. The first part of the present proof thus yields
$\| f \|_{1,\alpha_1,R^{d_W}}\simeq \mathbf{Var}_\mathcal{E} (f)$.
Taking $\sup_{r\in (0,R)}$ on both sides of~\eqref{E:BV3_04} we obtain in this case
\[
\sup_{r\in (0,R)}\int_X\int_{B(x,r)}\frac{|f(x)-f(y)|}{r^{\alpha_1 d_W}\mu(B(x,r))}d\mu(x)\leq C_1 \|f\|_{1,\alpha_1,R^{d_W}}
\]
and the remaining inequalities follows as in the previous case $R\in (0,1)$.
\end{proof}

\begin{remark}
Besides the comparison between $W^{1,p}(\mathcal E)$ and the Korevaar-Schoen spaces, it would be interesting to study their relation to Haj\l asz spaces~\cite{Haj96}. For $p \ge 1$ and $ \alpha \in (0,1]$, the Haj\l asz space $H^{p,\alpha}(X)$ on a metric measure space $(X,d,\mu)$ is defined as
\[
H^{p,\alpha}(X)=\left\{ f \in L^p(X,\mu), \, \exists g \in L^p(X,\mu), \, | f(x) -f(y) | \le d(x,y)^\alpha (g(x)+g(y)) \, \mu \, a.e. \right\}.
\]
From their definitions 
one can prove that if the heat kernel has sub-Gaussian estimates as in \eqref{E:subGaussian_HKE}, then 
\[
H^{p,\alpha}(X) \subset \mathbf{B}^{p,\frac{\alpha}{ d_W} }(X).
\]
The converse inclusion likely requires more assumptions on the underlying space $(X,d,\mu)$ which are related to curvature type lower bounds; in the context of RCD spaces, see~\cite{ABT18}.
\end{remark}

\subsection{Fractal spaces}\label{subS:fractals}

\subsubsection*{Nested fractals}
One class of fractals that are known to fit in the general framework of this paper are so-called nested fractals, among which the Sierpinski gasket is one of the most prominent examples. We refer to~\cite{Lin90,Fuk92,FHK94} for details on their general definition and the construction of a naturally associated diffusion process. In particular, nested fractals are also fractional metric spaces whose natural diffusion process is a fractional diffusion in the sense of Barlow~\cite[Definition 3.2]{Bar98}. The following theorem summarizes the results currently available that put these spaces into our setting; the proofs can be found in Theorem 3.7, Theorem 4.9 and Theorem 5.1 of~\cite{ABCRST3}. By an ``infinite'' fractal we mean its blow-up as introduced by R. S. Strichartz in~\cite{Str98}.

\begin{theorem}\label{T:nested_fractals}
Let $(X,d,\mu)$ be a compact or infinite nested fractal with $1\leq d_H\leq d_W$. Then, it satisfies $(\mathrm G_\infty)$. More precisely, the weak Bakry-\'Emery condition
\begin{equation}\label{E:BE_nested}
|P_tf(x)-P_tf(y)|\leq C\frac{d(x,y)^{d_W-d_H}}{t^{(d_W-d_H)/d_W}}\|f\|_{L^\infty(X,\mu)}\qquad t>0
\end{equation}
for some $C>0$ and any $f\in L^\infty (X,\mu)$ is satisfied. Moreover, $\alpha_1=d_H/d_W$ and
\begin{equation}\label{E:Var_nested}
 \|f\|_{1,d_H/d_W}\simeq \mathbf{Var}_{\mathcal{E}} (f).
\end{equation}
\end{theorem}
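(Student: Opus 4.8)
The plan is to cite the three component results of~\cite{ABCRST3} as indicated and then address the one point where the present hypotheses differ from those in~\cite{ABCRST3}, namely that here $(X,d,\mu)$ is not assumed to be Ahlfors regular but only a fractional metric space carrying a fractional diffusion in the sense of Barlow. First I would recall that a compact or infinite nested fractal is, by the construction of~\cite{Lin90,Fuk92,FHK94} together with Strichartz's blow-up~\cite{Str98}, a strongly local metric Dirichlet space whose heat kernel satisfies the sub-Gaussian estimates~\eqref{E:subGaussian_HKE} with walk dimension $d_W$ and with $\mu$ doubling; indeed on nested fractals one has the stronger Ahlfors regularity $\mu(B(x,r)) \simeq r^{d_H}$ for $r$ in the appropriate range, so~\eqref{E:subGaussian_HKE} takes the familiar form with $\mu(B(x,t^{1/d_W})) \simeq t^{d_H/d_W}$. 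This places nested fractals inside the framework of Section~\ref{sec2} and, in particular, makes Theorem~\ref{T:BV3} applicable once the relevant $(\mathrm G_\infty)$ estimate is known.

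Next I would establish the weak Bakry-\'Emery estimate~\eqref{E:BE_nested}. This is exactly~\cite[Theorem 3.7]{ABCRST3}: the off-diagonal Hölder continuity of $P_t f$ with exponent $\kappa = d_W - d_H$ and time scaling $t^{-(d_W-d_H)/d_W}$ is derived there from the sub-Gaussian heat kernel bounds and the Ahlfors regularity of nested fractals, using the gradient-type estimates available for fractional diffusions. Since the argument there uses only~\eqref{E:subGaussian_HKE} and the volume regularity, which as noted above hold verbatim for compact and infinite nested fractals, the estimate~\eqref{E:BE_nested} carries over without change, and it is valid for all $t>0$ (not merely $t$ small), which is why one may take $R = +\infty$ in the subsequent application of Theorem~\ref{T:BV3}.

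Then I would identify the critical exponent and the semi-norm equivalence. Comparing~\eqref{E:BE_nested} with the form of $(\mathrm G_\infty)$, namely~\eqref{weakBE1}, one reads off $d_W(1-\alpha_1) = d_W - d_H$, hence $1 - \alpha_1 = 1 - d_H/d_W$, i.e.\ $\alpha_1 = d_H/d_W$; this computation is recorded in~\cite[Theorem 4.9]{ABCRST3}, where it is also checked that $\mathbf{B}^{1,\alpha}(X)$ contains non a.e.\ constant functions precisely up to $\alpha = d_H/d_W$, so the supremum defining $\alpha_1$ in the definition of $BV(\mathcal E)$ equals $d_H/d_W$. With $(\mathrm G_\infty)$ in hand, Theorem~\ref{T:BV3} (applied with $R = +\infty$) gives $(\mathrm {PPI}_1)$ together with the chain of equivalences
\[
\mathbf{Var}_{\mathcal{E}}(f) \simeq \liminf_{r\to 0^+}\int_X\int_{B(x,r)}\frac{|f(y)-f(x)|}{r^{\alpha_1 d_W}\mu(B(x,r))}\,d\mu(y)\,d\mu(x) \simeq \|f\|_{1,\alpha_1},
\]
and substituting $\alpha_1 = d_H/d_W$ yields~\eqref{E:Var_nested}; the identity $\|f\|_{1,d_H/d_W} \simeq \mathbf{Var}_{\mathcal E}(f)$ is also stated directly as~\cite[Theorem 5.1]{ABCRST3}.

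The only genuine obstacle is the transfer from the Ahlfors regular setting of~\cite{ABCRST3} to the present one; for nested fractals this is not actually an obstacle since they are Ahlfors regular, but to keep the statement clean I would add a sentence noting that all three cited results of~\cite{ABCRST3} rely solely on~\eqref{E:subGaussian_HKE} and the doubling (here Ahlfors) property of $\mu$, both of which hold for compact and infinite nested fractals with $1 \le d_H \le d_W$, so the proofs apply verbatim. Everything else is bookkeeping: matching exponents in $(\mathrm G_\infty)$ and quoting Theorem~\ref{T:BV3}.
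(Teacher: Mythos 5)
Your outline follows the paper's route for the \emph{infinite} nested fractals (everything there is indeed quoted from Theorems 3.7, 4.9 and 5.1 of the cited work), but it has a genuine gap in the \emph{compact} case. You assert that the weak Bakry-\'Emery estimate \eqref{E:BE_nested} ``carries over without change, and it is valid for all $t>0$'' because the argument only uses the sub-Gaussian bounds \eqref{E:subGaussian_HKE} and Ahlfors regularity. This is not justified: for compact nested fractals the derivative-of-heat-kernel estimates invoked in the cited Theorem 3.7 are \emph{local in time}, so that argument only yields \eqref{E:BE_nested} for $t$ in a bounded interval. The passage from a bounded time interval to all $t>0$ is precisely the nontrivial step, and it is not a consequence of \eqref{E:subGaussian_HKE} alone. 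The paper handles it with Lemma~\ref{L:local_wBECD2}, a self-improvement result whose proof uses the pure point spectrum of the generator, $1\in\mathbf{dom}\,\Delta$, the Poincar\'e inequality (spectral gap), and an $L^\infty$ bound on the heat kernel at some fixed time $t_0$, in order to control the eigenfunction expansion of $P_tf$ for large $t$ and recover the polynomial-in-$t$ bound with the correct exponent. Without some such quantitative large-time argument, your claim that one may take $R=+\infty$ when applying Theorem~\ref{T:BV3}, and hence your identification of $(\mathrm G_\infty)$ globally in time, is unsupported in the compact setting.

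A secondary, smaller point: even granting the global estimate, your derivation of $\alpha_1=d_H/d_W$ by ``reading off'' exponents from \eqref{weakBE1} is circular as written, since $(\mathrm G_\infty)$ is stated in terms of $\alpha_1$ rather than determining it; the equality $\alpha_1=d_H/d_W$ and the equivalence \eqref{E:Var_nested} are results of the cited Theorem 5.1 (as you also note), so the correct logical order is to quote that theorem rather than to infer $\alpha_1$ from the form of the Bakry-\'Emery estimate. With the compact-case extension supplied by Lemma~\ref{L:local_wBECD2} and the second statement quoted directly, your argument matches the paper's proof.
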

\begin{proof}
The statement for infinite nested fractals is fully proved in~\cite{ABCRST3}. In the case of compact nested fractals, using the \textit{local} estimates on the derivative of the heat kernel as in~\cite[Theorem 3.7]{ABCRST3} one obtains the weak Bakry-\'Emery condition \textit{locally} for $t$ in a bounded interval. By virtue of Lemma~\ref{L:local_wBECD2}, the condition extends to any $t>0$. The second statement is~\cite[Theorem 5.1]{ABCRST3}.
\end{proof}
\begin{remark}
The condition~\eqref{E:BE_nested} actually holds for a more general class of fractals, c.f.~\cite[Theorem 3.7]{ABCRST3}, however the statement concerning $\alpha_1$ and the equivalence of norms~\eqref{E:Var_nested} is so far only valid for nested fractals. It is conjectured in~\cite[Conjecture 5.4]{ABCRST3} that for fractals like the Sierpinski carpet one has $\alpha_1=(d_H-d_{tH}+1)/d_W$, where $d_{tH}$ denotes the topological Hausdorff dimension of the space.
\end{remark}

\subsection*{Vicsek set}

An interesting specific example within this class of nested fractals is the standard Vicsek set in $\mathbb{R}^2$ equipped with its corresponding Dirichlet form $(\mathcal{E},\mathcal{F})$, see e.g.~\cite[p.26]{Bar98}. This is a fractional space with a fractional diffusion in the sense of Barlow and we know e.g. from Theorem~\ref{T:nested_fractals} that $\alpha_1=\frac{d_H}{d_W}$. In fact, it is possible to explicitly construct non-constant functions $h\in\mathcal{F}$ that belongs to $\mathbf{B}^{p,\beta_p}(X)$ for any $p\geq 1$ and $\beta_p=\big( 1-\frac{2}{p}\big)(1-\alpha_1)+\frac{1}{p}$ as in Remark~\ref{R:wBE_to_Gq}. We shall see that such a function $h$ (whose construction is inspired by~\cite[ Theorem 5.2]{ABCRST3}) is in fact a \textit{harmonic} function, and the construction may be generalized to so-called $m$-harmonic functions. 

\medskip

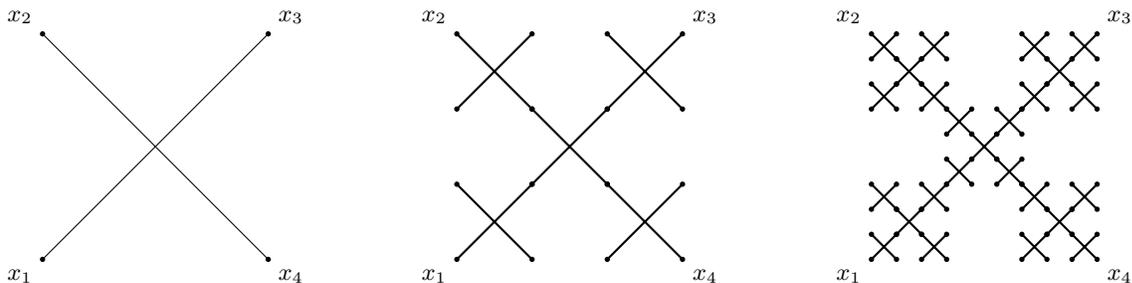
\begin{figure}[H]
\centering
\begin{tabular}{ccc}
\begin{tikzpicture}[scale=3]
\foreach \a/\b in {0/0}{
\coordinate[label=below left:{\footnotesize $x_1$}] (x1) at (\a,\b) ;
\fill (\a,\b) circle (1/3pt);
\coordinate[label=above left:{\footnotesize $x_2$}] (x2) at (\a,\b+1) ;
\fill (\a,\b+1) circle (1/3pt);
\coordinate[label=above right:{\footnotesize $x_3$}] (x3) at (\a+1,\b+1) ;
\fill (\a+1,\b+1) circle (1/3pt);
\coordinate[label=below right:{\footnotesize $x_4$}] (x4) at (\a+1,\b) ;
\fill (\a+1,\b) circle (1/3pt);
\draw (\a,\b) -- (\a+1,\b+1) (\a,\b+1) -- (\a+1,\b);
}
\end{tikzpicture}
\hspace*{2em}
&
\begin{tikzpicture}
\coordinate[label=below left:{\footnotesize $x_1$}] (x1) at (0,0) ;
\coordinate[label=above left:{\footnotesize $x_2$}] (x2) at (0,3) ;
\coordinate[label=above right:{\footnotesize $x_3$}] (x3) at (3,3) ;
\coordinate[label=below right:{\footnotesize $x_4$}] (x4) at (3,0) ;
\foreach \c/\d in {0/0}{
\foreach \a/\b in {0/0, 0/2, 1/1, 2/0, 2/2}{
\fill[shift={(\c,\d)}] (\a,\b) circle (1pt);
\fill[shift={(\c,\d)}] (\a,\b+1) circle (1pt);
\fill[shift={(\c,\d)}] (\a+1,\b+1) circle (1pt);
\fill[shift={(\c,\d)}] (\a+1,\b) circle (1pt);
\draw[thick,shift={(\c,\d)}] (\a,\b) -- (\a+1,\b+1) (\a,\b+1) -- (\a+1,\b);
}
}
\end{tikzpicture}
\hspace*{2em}
&
\begin{tikzpicture}[scale=1/3]
\coordinate[label=below left:{\footnotesize $x_1$}] (x1) at (0,0) ;
\coordinate[label=above left:{\footnotesize $x_2$}] (x2) at (0,9) ;
\coordinate[label=above right:{\footnotesize $x_3$}] (x3) at (9,9) ;
\coordinate[label=below right:{\footnotesize $x_4$}] (x4) at (9,0) ;
\foreach \c/\d in {0/0, 0/6, 3/3, 6/0, 6/6}{
\foreach \a/\b in {0/0, 0/2, 1/1, 2/0, 2/2}{
\fill[shift={(\c,\d)}] (\a,\b) circle (3pt);
\fill[shift={(\c,\d)}] (\a,\b+1) circle (3pt);
\fill[shift={(\c,\d)}] (\a+1,\b+1) circle (3pt);
\fill[shift={(\c,\d)}] (\a+1,\b) circle (3pt);
\draw[thick,shift={(\c,\d)}] (\a,\b) -- (\a+1,\b+1) (\a,\b+1) -- (\a+1,\b);
}
}
\end{tikzpicture}
\end{tabular}
\caption{Approximating graphs $(V_m,E_m)$ for the Vicsek set.}
\label{F:VicsekSet}
\end{figure}

Denote by $\{\psi_i\}_{i=1}^5$ the contraction mappings that generate $X$ and define for any $w\in\{1,\ldots, 5\}^m$ the mapping $\psi_w:=\psi_{w_1}{\circ}\ldots\circ\psi_{w_m}$ that generates an $m$-level copy of $X$, so that $X=\bigcup_{w\in\{1,\ldots,5\}^{m}}\psi_w(X)$. One can approximate $X$ by a sequence of graphs $\{(V_m,E_m)\}_{m\geq 0}$ as illustrated in Figure~\ref{F:VicsekSet}. 
A function $h\colon X\to\mathbb{R}$ is said to be $m$-harmonic if it arises as the energy minimizing extension of a given function with values on the approximation level $m$, i.e.
\[
\mathcal{E}(h,h)=\inf\{\mathcal{E}(g,g)\,\colon\,g|_{V_m}=f_m\}
\]
for some $f_m\colon V_m\to\mathbb{R}$. Following the notation and the result in~\cite[Proposition 7.13]{Bar98}, we write in this case $h=H_mf_m$ and know that $H_mf_m\in\mathcal{D}\cap C(X)$.  

\begin{theorem}\label{T:Vicsek1}
On the Vicsek set, the space $\mathbf{B}^{2,1/2}(X)\cap\mathbf{B}^{p,\beta_p}(X)$ contains non-trivial functions for any $p\geq 1$. In particular, for $1 \le p \le 2$,
\[
\alpha_p=\left( 1-\frac{2}{p}\right)\left(1-\frac{d_H}{d_W}\right)+\frac{1}{p}
\]
and $(\mathrm {PPI}_p)$ is satisfied.
\end{theorem}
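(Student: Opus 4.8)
The plan is to construct an explicit non-constant $m$-harmonic function $h$ on the Vicsek set and show that it belongs to $\mathbf{B}^{p,\beta_p}(X)$ for every $p\geq 1$, where $\beta_p=(1-\tfrac2p)(1-\tfrac{d_H}{d_W})+\tfrac1p$. Since $h\in\mathcal{F}$ we automatically have $h\in\mathbf{B}^{2,1/2}(X)$ (recall $W^{1,2}(\mathcal E)=\mathcal F$ and $\alpha_2=1/2$). To exhibit $h$, I would take $h=H_1 f_1$, the energy-minimizing extension of a function $f_1$ on $V_1$ that is constant on each of the five level-$1$ cells and takes two distinct values, so that on every $1$-cell $\psi_i(X)$ the function $h\circ\psi_i$ is an affine rescaling of $h$ itself (this is the self-similar scaling structure of harmonic functions on the Vicsek set; the energy scales by the resistance factor $\rho=3$ and the measure by $\mu$-weight $1/5$). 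The key geometric feature to use is that, because the Vicsek set is finitely ramified and the cells meet only at single points, the oscillation of $h$ across an $m$-cell is of order $\rho^{-m}=3^{-m}$, while an $m$-cell has diameter $\sim 3^{-m}$ (in the resistance-type metric) and measure $\sim 5^{-m}$.

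The main estimate is then to bound $\int_X P_t(|h-h(y)|^p)(y)\,d\mu(y)$ from above by $C t^{p\beta_p}$ and from below by $c t^{p\beta_p}$ as $t\to0^+$. I would proceed as in the proof of \cite[Theorem 5.2]{ABCRST3}: split the double integral $\int_X\int_X p_t(x,y)|h(x)-h(y)|^p\,d\mu(x)\,d\mu(y)$ according to whether $x,y$ lie in the same $m$-cell or in neighboring/distant cells, with $m$ chosen so that $3^{-m d_W}\sim t$, i.e.\ $3^{-m}\sim t^{1/d_W}$. For points in the same $m$-cell one uses $|h(x)-h(y)|\lesssim 3^{-m}$ together with the fact that $h$ restricted to each $m$-cell is a scaled copy of $h$, so the within-cell contribution is controlled by self-similarity. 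For points in distant cells one uses the sub-Gaussian upper bound in \eqref{E:subGaussian_HKE} to absorb the (bounded) oscillation of $h$ against the Gaussian decay; summing the geometric-type series over scales leaves a contribution of the same order. Collecting exponents: each of the $\sim 5^m$ cells contributes roughly $5^{-m}\cdot 3^{-mp}$ to the "diagonal" part, and dividing by the appropriate power of $t\sim 3^{-m d_W}$ and taking the $p$-th root yields exactly the exponent $\beta_p$; one checks directly that $\beta_p = \tfrac1p\big((1-\tfrac2p)(d_W-d_H)+ \text{correction}\big)/d_W$ arranges so that $h\in\mathbf B^{p,\beta_p}$ but $h\notin\mathbf B^{p,\beta}$ for $\beta>\beta_p$. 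The lower bound, which shows $h$ is genuinely at the critical exponent and hence $\alpha_p\geq\beta_p$, comes from the lower heat kernel bound in \eqref{E:subGaussian_HKE} restricted to a single pair of well-separated cells where $h$ has definite oscillation, exactly as in the lower-bound arguments of Theorems~\ref{W1p:I} and~\ref{T:BV3}.

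Once $h\in\mathbf B^{p,\beta_p}(X)$ is established for a non-constant $h$, it follows that $\alpha_p\geq\beta_p$ for all $p\geq1$. For the reverse inequality $\alpha_p\leq\beta_p$ when $1\leq p\leq 2$, I would invoke the weak Bakry-\'Emery estimate \eqref{E:BE_nested} from Theorem~\ref{T:nested_fractals} (valid on the Vicsek set since it is a nested fractal), which via Remark~\ref{R:wBE_to_Gq} gives the bound \eqref{E:Lp_bdd}, $\|P_tf\|_{q,\beta_q}\leq C_q t^{-\beta_q}\|f\|_{L^q}$ with $\beta_q=(1-\tfrac2q)(1-\alpha_1)+\tfrac1q$; interpolating between the $p=1$ result ($\alpha_1=d_H/d_W$ from Theorem~\ref{T:nested_fractals}) and the $p=2$ result ($\alpha_2=1/2$), together with the fact that any $f\in\mathbf B^{p,\alpha}$ with $\alpha>\beta_p$ would have to be a.e.\ constant, pins down $\alpha_p=\beta_p=(1-\tfrac2p)(1-\tfrac{d_H}{d_W})+\tfrac1p$ on $[1,2]$. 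Finally, since $(\mathrm G_q)$ holds with this $\alpha_p$ (here $1-\alpha_p=\beta_q$ exactly because of the linear interpolation identity, as noted in Remark~\ref{R:wBE_to_Gq}), Proposition~\ref{P:Lp_pseudoPI} for $p>1$ and Proposition~\ref{PseudoP} for $p=1$ give that $(\mathrm{PPI}_p)$ is satisfied.

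\medskip

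\emph{Main obstacle.} The hard part will be the sharp two-sided control of the within-cell contribution to $\int_X\int_X p_t|h(x)-h(y)|^p$: one must exploit the exact self-similar scaling of the harmonic function $h$ under the maps $\psi_w$ (energy scaling $\rho^{-m}$ versus measure scaling $5^{-m}$) and combine it cleanly with the off-diagonal sub-Gaussian decay so that the geometric series over scales $m$ with $3^{-m}\gtrsim t^{1/d_W}$ converges and contributes only a bounded multiple of the leading term — this is exactly the delicate bookkeeping carried out in \cite[Theorem 5.2]{ABCRST3}, and adapting it to general $p$ while tracking the precise power of $t$ is the technical crux.
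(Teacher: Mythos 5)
There is a genuine gap in the central estimate, and it is exactly at the point you flag as the ``technical crux''. Your bookkeeping assumes that every $m$-cell carries oscillation of order $3^{-m}$ and contributes roughly $5^{-m}3^{-mp}$, so that the diagonal part of $\int_X\int_X p_t|h(x)-h(y)|^p\,d\mu\,d\mu$ is of order $5^m\cdot 5^{-m}3^{-mp}=3^{-mp}$ at scale $t\simeq 3^{-md_W}$. But the target is $t^{p\beta_p}=3^{-m p\beta_p d_W}$, and on the Vicsek set $p\beta_p d_W=p+d_H-1$ (using $d_W=d_H+1$), so the required bound is $3^{-m(p-1)}5^{-m}$; your count exceeds it by the factor $3^{m(d_H-1)}=(5/3)^m\to\infty$. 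In other words, with oscillation on all $5^m$ cells you only prove $h\in\mathbf{B}^{p,1/d_W}(X)$, and $1/d_W<\beta_p$ for every $p$ (since $d_W>2$), which is not enough to conclude $\alpha_p\ge\beta_p$. The missing structural input, which is the heart of the paper's proof, is that a harmonic function on the Vicsek set is \emph{constant} off the distinguished diagonal joining the two active boundary vertices: among the $5^m$ level-$m$ cells only the $\sim 3^m$ cells meeting that diagonal (and their $r$-neighborhoods) carry any oscillation, and on those $h$ is linear, hence $|h(x)-h(y)|\le d(x,y)\le r$. Then the total is $3^m\cdot 5^{-m}3^{-mp}=3^{-m(p-1)}5^{-m}$, which matches $t^{p\beta_p}$ exactly because $d_W-d_H=1$; the paper runs this count in the Korevaar--Schoen form ($\sup_r r^{-(p\beta_p d_W+d_H)}\int_X\int_{B(y,r)}|h(x)-h(y)|^p$) and transfers it to the heat-semigroup norm via~\cite[Theorem 2.4]{ABCRST3}. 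Without the ``only $3^m$ active cells'' observation the exponent $\beta_p$ is simply not reachable by this construction.

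Two smaller points. First, your explicit choice $h=H_1f_1$ with $f_1$ constant on each of the five $1$-cells and taking two distinct values is inconsistent: adjacent cells share junction vertices, so such data forces $f_1$ (hence $h$) to be globally constant; the paper instead takes the $0$-harmonic extension of corner data supported on two diagonally opposite vertices. Second, your route to $(\mathrm{PPI}_p)$ via $(\mathrm G_q)$ and Proposition~\ref{P:Lp_pseudoPI} implicitly needs $\alpha_q=\beta_q$ for the conjugate exponent $q\ge 2$ (so that the seminorm $\|\cdot\|_{q,\alpha_q}$ in $(\mathrm G_q)$ is the one controlled by Remark~\ref{R:wBE_to_Gq}); in this paper that identification for $p\ge2$ is only asserted in Remark~\ref{R:Vicsek2} with proof deferred, and the paper instead obtains both $\alpha_p=\beta_p$ and $(\mathrm{PPI}_p)$ for $1\le p\le 2$ by citing Theorems 3.10 and 3.11 of~\cite{ABCRST3}. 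Also note that showing $h\notin\mathbf{B}^{p,\beta}$ for $\beta>\beta_p$ (your ``lower bound'') is not what gives $\alpha_p\le\beta_p$; only the upper bound $\alpha_p\le\beta_p$ from the weak Bakry--\'Emery machinery does that.
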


\begin{proof}
Let us consider graph approximation $(V_0,E_0)$ and a function $f_0\colon V_0\to\mathbb{R}$ that takes the values $a_1,a_2,a_3,a_4$ on each vertex $x_1,x_2,x_3,x_4$ of $V_0$, respectively. For simplicity, we assume that the function is only non-zero at two connected vertices, say $x_1$ and $x_3$. A generic function in $V_0$ can be analyzed by writing is as the sum of two that are zero at complementary pairs of (connected) vertices. The harmonic extension of $f_0$ to the Vicsek set $X$ is defined as the function $h:=H_0f_0\in\mathcal{F}$ such that $h|_{V_0}\equiv f_0$ and
\[
\mathcal{E}(h,h)=\min\{\mathcal{E}(f,f)\,\colon\,f\in\mathcal{F}\text{ and }f_{|_{V_0}}=f_0\}.
\]
This $0$-harmonic function $h$ is obtained by linear interpolation on the diagonal that joins $x_1$ and the upper-right corner $x_3$. We call this the ``distinguished'' diagonal. On all branches intersecting it, including the other diagonal crossing lower-right to upper-left, $h$ is constant according to its value on the distinguished diagonal. 
This harmonic extension is clearly non-constant, 
it is unique and belongs to $\mathcal{F}=\mathbf{B}^{2,1/2}(X)$, see e.g.~\cite[Lemma 8.2]{Kig12}. In order to prove that $\|h\|_{p,\beta_p}<\infty$ for any $p\ge 1$, we first fix $r\in (0,1/6)$ and set $n:=n_r\geq 0$ to be the largest such that $2r<3^{-(n+1)}$. 
Note that $X$ can be covered by $5^n$ squares of side length $3^{-n}$, which we denote $\{Q_i^{(n)}\}_{i=1}^{5^n}$. By construction, the function $h$ is constant on cells $B_i^{(n)}:=X\cap Q_i^{(n)}$ for which $Q_i^{(n)}$ does not intersect the distinguished diagonal of $X$. In addition, $h$ is also constant on the $r$-neighborhood of any such cell, i,e.
\begin{equation*}
|h(x)-h(y)|=0\quad\text{for any }y\in B_i^{(n)}\text{ and } x\in B(y,r).
\end{equation*}
In other words, among the $n$-cells $\{B_i^{(n)}\}_{i=1}^{5^{n}}$, only in $3^n$ of them the latter difference is nonzero. Since $h$ is by definition linear, on any of these $3^n$ cells it holds that
\begin{equation*}
|h(x)-h(y)|\leq d(x,y)\quad\text{for all }y\in B_i^{(n)}\text{ and } x\in B(y,r).
\end{equation*}
Combining this two facts and using the Ahlfors regularity of the space we have for any $p\geq 1$
\begin{align*}
&\frac{1}{r^{p\alpha_pd_W+d_H}}\int_X\int_{B(y,r)}|h(x)-h(y)|^pd\mu(x)\,d\mu(y)\\
&\leq \frac{1}{r^{p\beta_pd_W+d_H}}\sum_{i=1}^{3^n}\int_{B_i}\int_{B(y,r)}r^p\,d\mu(x)\,d\mu(y)\\
&\leq\frac{C}{r^{p\beta_pd_W+d_H}}\sum_{i=1}^{3^n}r^{p+d_H}\mu(B_i) 
\leq \frac{C}{r^{p\beta_pd_W+d_H}}3^nr^{p+d_H}\Big(\frac{3r}{2}\Big)^{d_H}\\
&\leq \frac{C}{r^{p\beta_pd_W+d_H}} r^{-1+p+2d_H}
=C r^{p+d_H-(1+p\beta_pd_W)}.
\end{align*}
From Theorem~\ref{T:nested_fractals} we know that $\beta_1=\frac{d_H}{d_W}$, which substituting above yields the exponent 
\begin{equation}\label{E:Vicsek_01}
p+d_H-(1+p\beta_pd_W)=(p-1)(1+d_H-d_W).
\end{equation}
In addition, the Vicsek set satisfies $d_W=1+d_H$, c.f.~\cite[Theorem 8.18]{Bar98}, hence~\eqref{E:Vicsek_01} equals zero and we get
\begin{equation*}
\frac{1}{r^{p\beta_pd_W+d_H}}\int_X\int_{B(y,r)}|h(x)-h(y)|^pd\mu(x)\,d\mu(y)\leq C.
\end{equation*}
Since the bound is independent of $r$, we may now estimate
\[
\sup_{r\in (0,1/6)}\frac{1}{r^{p\beta_pd_W+d_H}}\int_X\int_{B(y,r)}|h(x)-h(y)|^pd\mu(x)\,d\mu(y)\leq C 
\]
which in view of~\cite[Theorem 2.4]{ABCRST3} yields
\begin{equation*}
\|h\|_{p,\beta_p}\leq C_{p,\beta_p}(C+6^{\beta_pd_W}\|h\|_{L^p(X,\mu)})
\end{equation*}
as we wanted to prove.

The space $\mathbf{B}^{p,\beta_p}(X)$ is therefore non trivial. By definition of the critical exponent $\alpha_p$ this yields $ \alpha_p \ge \beta_p$ and~\cite[Theorem 3.11]{ABCRST3} yields $ \alpha_p = \beta_p$. Finally one obtains the property $(\mathrm {PPI}_p)$ from~\cite[Theorem 3.10]{ABCRST3}.

\end{proof}

\begin{remark}\label{R:Vicsek2}
It is actually possible to prove that any $m$-harmonic function $H_mf$ on the Vicsek set belongs to $\mathbf{B}^{p,\beta_p}(X)$ for any $p\geq 1$ and that there exists $C>0$ independent of $m$ such that
\begin{equation}\label{E:Vicsekmharmonic}
\|H_mf_{|_{W_m}}\|_{p,\beta_p}\leq C\|f\|_{L^\infty(X,\mu)}.
\end{equation}
As a consequence, one can deduce that $\alpha_p=\beta_p$ for every $p \ge 2$. For concision, the proof of this fact is postponed to~\cite{ABCRST5}. We note that however, the question of the validity of $(\mathrm {PPI}_p)$ for $p>2$ is still open.
\end{remark}

\subsubsection*{Products of nested fractals}
Higher dimensional examples of fractal spaces can be constructed by taking products; we refer to~\cite{Str04} for further details and results regarding heat kernel estimates on such fractals. In particular, as noticed in~\cite[Section 3.3]{ABCRST3}, given a nested fractal $X$ that satisfies the sub-Gaussian estimates~\eqref{E:subGaussian_HKE}, is $n$-fold product $X^n$ will have Hausdorff dimension $nd_H$, while its walk dimension $d_W$ remains unchanged. The next theorem puts these spaces into our setting. 

\begin{theorem}[Proposition 3.8, Theorem 5.6\cite{ABCRST3}]\label{T:Vicsek_prod}
Let $(X,d,\mu)$ be a nested fractal with $1\leq d_H\leq d_W$. Then, Theorem~\ref{T:nested_fractals} holds with the same exponents for any $n$-fold product $(X^n,d_{X^n},\mu^{\otimes n})$, $n\geq 1$.
\end{theorem}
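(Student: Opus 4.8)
The plan is to reduce the statement about the $n$-fold product $X^n$ to the known one-dimensional results recorded in Theorem~\ref{T:nested_fractals}, exploiting the fact that the heat semigroup on a product factorizes. Write $(X,d,\mu)$ for the nested fractal and let $\{P_t^{X}\}_{t\ge 0}$ denote its heat semigroup. On the product $X^n$ equipped with the $\ell^\infty$-type product metric $d_{X^n}(\mathbf x,\mathbf y)=\max_i d(x_i,y_i)$ and the product measure $\mu^{\otimes n}$, the Dirichlet form is the tensor-sum form and the associated semigroup is the tensor product $P_t^{X^n}=P_t^{X}\otimes\cdots\otimes P_t^{X}$; equivalently, its heat kernel is $p_t^{X^n}(\mathbf x,\mathbf y)=\prod_{i=1}^n p_t^{X}(x_i,y_i)$. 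Since $X$ satisfies the sub-Gaussian estimates~\eqref{E:subGaussian_HKE} with walk dimension $d_W$, multiplying these estimates coordinatewise and using that $\mu(B(x_i,t^{1/d_W}))$ comparisons tensorize shows that $X^n$ also satisfies~\eqref{E:subGaussian_HKE} with the same $d_W$ (and Hausdorff dimension $nd_H$), as already observed in~\cite[Section~3.3]{ABCRST3}.

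First I would establish the weak Bakry-\'Emery estimate $(\mathrm G_\infty)$ on $X^n$. Given $F\in L^\infty(X^n,\mu^{\otimes n})$, I would estimate $|P_t^{X^n}F(\mathbf x)-P_t^{X^n}F(\mathbf y)|$ by a telescoping sum over the $n$ coordinates: replacing $x_i$ by $y_i$ one index at a time, each difference is controlled by applying the one-dimensional estimate~\eqref{E:BE_nested} in the $i$-th variable to the function obtained by freezing the other coordinates (and using that $\|P_t^{X}\|_{L^\infty\to L^\infty}\le 1$ in the remaining variables to absorb the other factors). This yields
\[
|P_t^{X^n}F(\mathbf x)-P_t^{X^n}F(\mathbf y)|\le C\sum_{i=1}^n\frac{d(x_i,y_i)^{d_W-d_H}}{t^{(d_W-d_H)/d_W}}\|F\|_{L^\infty}\le Cn\,\frac{d_{X^n}(\mathbf x,\mathbf y)^{d_W-d_H}}{t^{(d_W-d_H)/d_W}}\|F\|_{L^\infty},
\]
which is exactly $(\mathrm G_\infty)$ on $X^n$ with exponent $d_W(1-\alpha_1)=d_W-d_H$; note $\alpha_1$ here must be read as the critical exponent for the product, so part of the work is to check it equals $d_H/d_W$ as well. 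By Proposition~\ref{PseudoP}, $(\mathrm{PPI}_1)$ follows on $X^n$.

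Next I would identify the critical exponent $\alpha_1$ for $X^n$ and prove the norm equivalence~\eqref{E:Var_nested}. The lower bound $\alpha_1^{X^n}\ge d_H/d_W$ can be obtained by exhibiting a non-constant function in $\mathbf B^{1,d_H/d_W}(X^n)$: take a non-constant $h\in\mathbf B^{1,d_H/d_W}(X)$ from Theorem~\ref{T:nested_fractals} and lift it to $H(\mathbf x)=h(x_1)$; using $p_t^{X^n}(\mathbf x,\mathbf y)=\prod_i p_t^X(x_i,y_i)$ and $\int_X p_t^X(x_i,y_i)\,d\mu(x_i)=1$ for $i\ge 2$, the product Besov seminorm of $H$ reduces to that of $h$ on $X$, so $H\in\mathbf B^{1,d_H/d_W}(X^n)$. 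For the upper bound $\alpha_1^{X^n}\le d_H/d_W$, one invokes that $(\mathrm G_\infty)$ on $X^n$ together with the heat kernel estimates force, via Theorem~\ref{T:BV3} applied to $X^n$, that $\alpha_1^{X^n}$ equals $d_H/d_W$ and that $\mathbf{Var}_{\mathcal E}(f)\simeq\|f\|_{1,d_H/d_W}$; alternatively one can appeal directly to~\cite[Theorem~3.10, Theorem~3.11]{ABCRST3}, whose hypotheses are exactly what we have verified. This simultaneously delivers~\eqref{E:Var_nested} on the product.

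The main obstacle, I expect, is the bookkeeping in the tensorization of~\eqref{E:subGaussian_HKE} and of the weak Bakry-\'Emery estimate: one must be careful that the volume terms $\mu(B(x_i,t^{1/d_W}))$ combine to give $\mu^{\otimes n}(B_{X^n}(\mathbf x,t^{1/d_W}))$ up to constants (this uses the doubling property coordinatewise and the equivalence of the product metric with the Euclidean-type metric on $X^n$), and that the sub-Gaussian exponential factors, which do not split as cleanly as a pure Gaussian, still combine to the right form with a possibly different constant $c_4$. Since all of this is essentially contained in~\cite{ABCRST3} and~\cite{Str04}, the proof should be short: assemble the three ingredients---$X^n$ satisfies~\eqref{E:subGaussian_HKE} with the stated dimensions, $X^n$ satisfies $(\mathrm G_\infty)$ via telescoping, and hence Theorem~\ref{T:nested_fractals} (equivalently Theorem~\ref{T:BV3}) applies verbatim to $X^n$---and cite~\cite[Proposition~3.8, Theorem~5.6]{ABCRST3} for the parts already written there.
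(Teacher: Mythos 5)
Your argument is correct and coincides with the one the paper relies on: the paper gives no proof of Theorem~\ref{T:Vicsek_prod} beyond citing Proposition~3.8 and Theorem~5.6 of~\cite{ABCRST3}, and your reconstruction (tensorized heat kernel with the same $d_W$, telescoping over coordinates to transfer the weak Bakry-\'Emery estimate~\eqref{E:BE_nested} to $X^n$, lifting a non-constant $h\in\mathbf{B}^{1,d_H/d_W}(X)$ to get the lower bound on the critical exponent, and invoking Theorem~\ref{T:BV3} together with~\cite[Theorems 3.10--3.11]{ABCRST3} for the upper bound, $(\mathrm{PPI}_1)$ and the norm equivalence) is exactly the content of those cited results. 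So the proposal is essentially the paper's (delegated) proof, spelled out in more detail.
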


In the case of the Vicsek set, and in view of Theorem~\ref{T:Vicsek1} and Remark~\ref{R:Vicsek2} one has the following result.

\begin{theorem}
Let $(X,d,\mu)$ denote the Vicsek set. For the $n$-fold product $(X^n,d_{X^n},\mu^{\otimes n})$, $n\geq 1$, for any $p \ge 1$ it holds that
\[
\alpha_p=\left( 1-\frac{2}{p}\right)\left(1-\frac{d_H}{d_W}\right)+\frac{1}{p}
\]
and $(\mathrm {PPI}_p)$ is satisfied for any $1 \le p \le 2$, where $d_H$ is the Hausdorff dimension of $X$ and $d_W$ the walk dimension of $X$.
\end{theorem}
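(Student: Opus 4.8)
The plan is to transfer the single-factor results of Theorem~\ref{T:Vicsek1} and Remark~\ref{R:Vicsek2} to $(X^n,d_{X^n},\mu^{\otimes n})$ by exploiting the tensorization of the heat semigroup, together with Theorem~\ref{T:Vicsek_prod}. Normalize $\mu(X)=1$, so that $\mu^{\otimes n}(X^n)=1$; since the product Dirichlet form has semigroup $P_t^{X^n}=(P_t^X)^{\otimes n}$, its heat kernel factorizes as $p_t^{X^n}(x,dy)=\prod_{i=1}^n p_t^X(x_i,dy_i)$. For the lower bound on $\alpha_p$, let $h\colon X\to\mathbb R$ be the non-constant harmonic function constructed in the proof of Theorem~\ref{T:Vicsek1}, which satisfies $\|h\|_{p,\beta_p}<+\infty$ for every $p\geq 1$, where $\beta_p:=\bigl(1-\tfrac2p\bigr)\bigl(1-\tfrac{d_H}{d_W}\bigr)+\tfrac1p$ and $d_H,d_W$ are the invariants of the \emph{single} Vicsek set $X$. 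Put $H(x_1,\dots,x_n):=h(x_1)$. Because $|H(x)-H(y)|^p=|h(x_1)-h(y_1)|^p$ does not depend on $x_2,\dots,x_n$, conservativeness ($P_t^X1=1$) and the factorization of $p_t^{X^n}$ give, for every $t>0$,
\[
\int_{X^n}P_t^{X^n}\bigl(|H-H(y)|^p\bigr)(y)\,d\mu^{\otimes n}(y)=\int_X P_t^X\bigl(|h-h(y_1)|^p\bigr)(y_1)\,d\mu(y_1),
\]
whence $\|H\|_{p,\alpha}=\|h\|_{p,\alpha}$ (and likewise for the localized semi-norms) for every $\alpha\geq 0$. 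In particular $H\in\mathbf B^{p,\beta_p}(X^n)$ is non-constant, so $\alpha_p\geq\beta_p$ for all $p\geq 1$. (One could instead repeat the ball-integral estimate of the proof of Theorem~\ref{T:Vicsek1} directly on $X^n$, using its Ahlfors regularity and $d_W=1+d_H$, but the tensorization is shorter; the uniform bound~\eqref{E:Vicsekmharmonic} of Remark~\ref{R:Vicsek2} transfers the same way.)

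\textbf{Upper bound and $(\mathrm{PPI}_p)$.} Since the Vicsek set is a nested fractal with $1\leq d_H\leq d_W$, Theorem~\ref{T:Vicsek_prod} shows that $X^n$ satisfies $(\mathrm G_\infty)$ with $\alpha_1=d_H/d_W$ and admits sub-Gaussian heat kernel estimates (walk dimension $d_W$, Hausdorff dimension $nd_H$; see also~\cite{Str04}), the latter implying~\eqref{E:weak_Harnack} on $X^n$. Feeding these into the machinery of~\cite{BCLS95} exactly as in the proof of Theorem~\ref{T:Vicsek1} — namely \cite[Theorem~3.11]{ABCRST3} for $1\leq p\leq 2$ and the argument behind Remark~\ref{R:Vicsek2} (see~\cite{ABCRST5}) for $p\geq 2$ — gives $\alpha_p\leq\bigl(1-\tfrac2p\bigr)(1-\alpha_1)+\tfrac1p=\beta_p$, so $\alpha_p=\beta_p$ for every $p\geq 1$. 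For $1\leq p\leq 2$ the resulting identity $\alpha_p=\bigl(1-\tfrac2p\bigr)(1-\alpha_1)+\tfrac1p$ is precisely the equality discussed in Remark~\ref{R:wBE_to_Gq}, so $(\mathrm G_\infty)$ upgrades to $(\mathrm G_q)$ for the Hölder conjugate $q\geq 2$ of $p$, and $(\mathrm{PPI}_p)$ follows from Proposition~\ref{P:Lp_pseudoPI} (the case $p=1$ being also covered by Proposition~\ref{PseudoP}); alternatively one cites~\cite[Theorem~3.10]{ABCRST3} directly.

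\textbf{Main obstacle.} The delicate point is the content of Theorem~\ref{T:Vicsek_prod}: that $\alpha_1$ of the product equals $d_H/d_W$ of the \emph{single} factor, and not $nd_H/d_W$ (which would exceed $1$ already for $n=2$ and clash with the weak Bakry--\'Emery exponent $d_W(1-\alpha_1)=d_W-d_H$). It is exactly this that makes the tensor witness $H(x)=h(x_1)$ \emph{critical} in $X^n$ rather than merely admissible, so that the lower and upper bounds meet. The rest is bookkeeping: one must check that the hypotheses of~\cite[Theorems~3.10 and 3.11]{ABCRST3} (sub-Gaussian estimates, $(\mathrm G_\infty)$, and, for $p=1$, \eqref{E:weak_Harnack}) transfer to $X^n$, which they do by Theorem~\ref{T:Vicsek_prod}.
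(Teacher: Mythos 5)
Your proposal is correct and takes essentially the same route as the paper, which states this theorem without a separate proof as a direct combination of Theorem~\ref{T:Vicsek_prod} (transfer of $(\mathrm G_\infty)$, $\alpha_1=d_H/d_W$ and the sub-Gaussian estimates to $X^n$) with Theorem~\ref{T:Vicsek1}, Remark~\ref{R:Vicsek2} and the cited results of~\cite{ABCRST3} — exactly the ingredients you invoke. Your explicit tensorized witness $H(x)=h(x_1)$ (legitimate here since the standard Vicsek set has finite measure, so $H\in L^p(X^n,\mu^{\otimes n})$) merely makes concrete the lower bound $\alpha_p\ge\beta_p$ that the paper leaves implicit.
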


\section{Gagliardo-Nirenberg and Trudinger-Moser inequalities}\label{section GN-TM}

We now turn to the core of the paper and show how the pseudo-Poincar\'e inequalities introduced in Section~\ref{subS:PPIs} can be applied to obtain the whole range of Gagliardo-Nirenberg and Trudinger-Moser inequalities for the Sobolev spaces $W^{1,p}(\mathcal{E})$. The techniques used rely on Lemma~\ref{L:local_norm_approx:a} in conjunction with general methods developed in~\cite{BCLS95}.

\subsection{Global versions}

We start by recalling once again that, since $(X,\mu)$ is assumed to be a good measurable space, the semigroup $\{P_t\}_{t\geq 0}$ associated with the Dirichlet form $(\mathcal{E},\mathcal{F})$ admits a measurable heat kernel $p_t(x,y)$, c.f.~\cite[Theorem 1.2.3]{BGL14}). Throughout this section we will assume that the heat kernel satisfies
\begin{equation}\label{E:subGauss-upper}
p_{t}(x,y)\leq C_h t^{-\beta}
\end{equation}
for some $C_h>0$ and $\beta >0$, $\mu\times\mu$-a.e.\ $(x,y)\in X\times X$ and any $t>0$. In addition, we will consider for each $p\ge 1$ the $L^p$ pseudo-Poincar\'e inequality $(\rm{PPI}_p)$ from Section~\ref{subS:PPIs}: There exists a constant $C_p>0$ such that for every $t \ge 0$ and $f \in W^{1,p}(\mathcal{E})$ (or $BV(\mathcal{E})$ for $p=1$),
\[
\| P_t f -f \|_{L^p(X,\mu)} \le C_p t^{\alpha_p} \mathbf{Var}_{p,\mathcal{E}} (f).
\]

The following result extends to the abstract Dirichlet space framework the classical Gagliardo-Nirenberg inequalities, see e.g.~\cite{Bad09}.

\begin{theorem}\label{T:Gagliardo-Nirenberg}
Assume  that $(\mathrm {PPI}_p)$ is satisfied for some $p \ge 1$. Then, there exists a constant $c_{p}>0$ such that for every $f \in W^{1,p}(\mathcal E)$ (or $BV(\mathcal{E})$ for $p=1$),
\begin{equation}\label{E:Gagliardo-Nirenberg}
\| f \|_{L^q(X,\mu)} \le c_pC_{p}^{\frac{\beta}{\beta+\alpha_p}}C_h^{\frac{\alpha_p}{\beta+\alpha_p}} \mathbf{Var}_{p,\mathcal{E}} (f)^{\frac{\beta}{\beta+\alpha_p}} \| f \|^{\frac{\alpha_p}{\beta+\alpha_p}}_{L^1(X,\mu)},
\end{equation}
where $q=p \big( 1 +\frac{\alpha_p}{\beta} \big)$.
\end{theorem}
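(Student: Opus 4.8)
The plan is to follow the classical Bakry--Coulhon--Ledoux--Saloff-Coste strategy from~\cite{BCLS95}, exploiting the heat kernel upper bound~\eqref{E:subGauss-upper} and the pseudo-Poincar\'e inequality $(\mathrm{PPI}_p)$. First I would reduce to the case of nonnegative $f$, since $\mathbf{Var}_{p,\mathcal{E}}(|f|)\le\mathbf{Var}_{p,\mathcal{E}}(f)$ (the map $f\mapsto|f|$ being a normal contraction, hence compatible with the Dirichlet form and the heat semigroup), and the left-hand side of~\eqref{E:Gagliardo-Nirenberg} is unchanged. For such $f$ and any $t>0$, write the elementary splitting
\[
\|f\|_{L^q(X,\mu)}\le \|f-P_tf\|_{L^q(X,\mu)}+\|P_tf\|_{L^q(X,\mu)}.
\]
The second term is controlled by ultracontractivity: from~\eqref{E:subGauss-upper} one has $\|P_t g\|_{L^\infty(X,\mu)}\le C_h t^{-\beta}\|g\|_{L^1(X,\mu)}$, and interpolating with the $L^1$-contractivity $\|P_t g\|_{L^1}\le\|g\|_{L^1}$ gives $\|P_t f\|_{L^q(X,\mu)}\le (C_h t^{-\beta})^{1-1/q}\|f\|_{L^1(X,\mu)}$ (using that $P_t$ is also $L^\infty$-contractive to cover all intermediate exponents).

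The first term is the delicate one, and this is where I expect the main obstacle to lie: one cannot estimate $\|f-P_tf\|_{L^q}$ directly by $(\mathrm{PPI}_p)$, which only controls the $L^p$ norm. The standard device, and the reason Lemma~\ref{L:local_norm_approx:a} was proved, is to decompose $f$ dyadically using the truncations $f_{\rho,k}=(f-\rho^k)_+\wedge\rho^k(\rho-1)$ with $\rho=2$, so that $f=\sum_{k\in\mathbb Z} f_{2,k}$ pointwise, and to run the $L^q$ estimate level by level. On each level one applies $(\mathrm{PPI}_p)$ to $f_{2,k}$ together with the crude bound relating the $L^q$ norm of $f-P_tf$ restricted to the relevant super-level set to $\|f-P_tf\|_{L^p}$ via the fact that $f_{2,k}$ is essentially of constant size $\simeq 2^k$ on its support; summing the resulting geometric-type series in $k$ and invoking $\bigl(\sum_k \mathbf{Var}_{p,\mathcal{E}}(f_{2,k})^p\bigr)^{1/p}\le 2(p+1)\mathbf{Var}_{p,\mathcal{E}}(f)$ from Lemma~\ref{L:local_norm_approx:a} yields $\|f-P_tf\|_{L^q(X,\mu)}\le c\, t^{\alpha_p}\mathbf{Var}_{p,\mathcal{E}}(f)^{\theta}\|f\|_{L^1(X,\mu)}^{1-\theta}$ for the appropriate exponent $\theta$, after another application of ultracontractivity to the pieces. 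Carrying out this bookkeeping carefully — tracking which $L^r$ norm appears at each stage and which power of $t$ and of the two norms of $f$ accompanies it — is the technical heart; I would organize it exactly as in~\cite[Lemma 7.1 and its consequences]{BCLS95}, specialized to exponent $a=p$.

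Finally, having obtained
\[
\|f\|_{L^q(X,\mu)}\le c_1\, t^{\alpha_p}\,\mathbf{Var}_{p,\mathcal{E}}(f)^{\theta}\,\|f\|_{L^1(X,\mu)}^{1-\theta} + c_2\,(C_h t^{-\beta})^{1-1/q}\,\|f\|_{L^1(X,\mu)},
\]
one optimizes over $t>0$. With $q=p(1+\alpha_p/\beta)$ the two powers of $t$ are $t^{\alpha_p}$ and $t^{-\beta(1-1/q)}=t^{-\alpha_p}$ (this is precisely the choice of $q$ that balances them), so the minimizing $t$ equates the two terms; substituting back produces the geometric-mean combination with exponents $\beta/(\beta+\alpha_p)$ and $\alpha_p/(\beta+\alpha_p)$ on $\mathbf{Var}_{p,\mathcal{E}}(f)$ and $\|f\|_{L^1(X,\mu)}$ respectively, and the stated powers of $C_p$ and $C_h$, after checking that $\theta$ matches $\beta/(\beta+\alpha_p)$. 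This last step is a routine one-variable optimization once the dictionary of exponents has been set up correctly.
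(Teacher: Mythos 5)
Your list of ingredients (reduction to $f\ge 0$, the dyadic truncations and Lemma~\ref{L:local_norm_approx:a}, ultracontractivity $\|P_tf\|_{L^\infty}\le C_h t^{-\beta}\|f\|_{L^1}$) is the right one, and your estimate $\|P_tf\|_{L^q(X,\mu)}\le (C_h t^{-\beta})^{1-1/q}\|f\|_{L^1(X,\mu)}$ is correct. However, the way you propose to close the argument has a genuine gap. The paper never splits $\|f\|_{L^q}\le \|f-P_tf\|_{L^q}+\|P_tf\|_{L^q}$ with a single time $t$ to be optimized at the end. Instead it proves a \emph{weak-type} estimate with a \emph{level-dependent} time: after normalizing $\sup_{t>0}t^{-\alpha_p\theta/(\theta-1)}\|P_tf\|_{L^\infty}\le 1$ one chooses, for each level $s>0$, the time $t_s=s^{(\theta-1)/(\alpha_p\theta)}$ so that $|P_{t_s}f|\le s$, and then $s^q\mu(\{f\ge 2s\})\le s^{q-p}\|f-P_{t_s}f\|_{L^p}^p\le C_p^p\mathbf{Var}_{p,\mathcal E}(f)^p$ by $(\mathrm{PPI}_p)$. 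This is applied to each truncation $f_k$ at level $s=2^k$ (so the time varies with $k$), the sum over $k$ is controlled by Lemma~\ref{L:local_norm_approx:a}, the strong $L^q$ norm is recovered by the layer-cake formula, and only at the very end is ultracontractivity used to replace the $B^{\,\alpha_p\theta/(\theta-1)}_{\infty,\infty}$ semi-norm by $\|f\|_{L^1}$. The level-dependent choice of $t$ is the essential mechanism and is absent from your plan.

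Concretely, two steps of your proposal fail. First, the intermediate bound you assert, $\|f-P_tf\|_{L^q}\le c\,t^{\alpha_p}\mathbf{Var}_{p,\mathcal E}(f)^{\theta}\|f\|_{L^1}^{1-\theta}$ for a fixed $t$, cannot follow from applying $(\mathrm{PPI}_p)$ to the pieces $f_{2,k}$: $(\mathrm{PPI}_p)$ only controls $L^p$ norms, and the functions $f_{2,k}-P_tf_{2,k}$ are not almost disjointly supported since $P_t$ spreads supports. Worse, the asserted inequality is false: in $\mathbb{R}^n$ (where $\alpha_p=\tfrac12$, $\beta=\tfrac n2$, $\mathbf{Var}_{p,\mathcal E}(f)\simeq\|\nabla f\|_{L^p}$, and $q$, $\theta$ are exactly the $L^1$-endpoint Gagliardo--Nirenberg exponents) it is not scale invariant; applying it to $f(\lambda\,\cdot)$ gives $\|f-P_sf\|_{L^q}\le c\,t^{1/2}\|\nabla f\|_{L^p}^{\theta}\|f\|_{L^1}^{1-\theta}$ for every $s>0$ with $t>0$ arbitrary, and letting $t\to 0$ forces $P_sf=f$, a contradiction; no positive power of $t$ can appear in such an estimate. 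Second, the exponent bookkeeping in your final optimization is wrong: with $q=p(1+\alpha_p/\beta)$ one has $\beta(1-1/q)=\alpha_p$ if and only if $(p-1)\beta^2=p\,\alpha_p^2$, which fails in general (already for $p=1$), so the two powers of $t$ do not balance as claimed; and even if they did, the factor $\|f\|_{L^1}^{1-\theta}$ already present in your first term would yield the wrong final exponents on $\mathbf{Var}_{p,\mathcal E}(f)$ and $\|f\|_{L^1}$. To repair the argument, replace the fixed-$t$ splitting by the weak-type estimate with $t_s$ adapted to each dyadic level, as in the paper or in~\cite{BCLS95}.
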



\begin{proof}
For $p\geq 1$, we set $\theta:=p/q$ and consider the semi-norm
\begin{equation}\label{E:Ledoux_norm}
\|f\|_{B^{\alpha_p\theta/(\theta-1)}_{\infty,\infty}}=\sup_{t >0} t^{-\alpha_p\theta/(\theta-1)} \| P_t f \|_{L^\infty(X,\mu)}.
\end{equation}
Let $f\in W^{1,p}(\mathcal E)$ (or $BV(\mathcal E)$ if $p=1$) and assume first that $f\geq 0$ and also that, by homogeneity, $\|f\|_{B^{\alpha\theta/(\theta-1)}_{\infty,\infty}}\leq 1$. 
For any $s>0$, set $t_s=s^{\frac{\theta-1}{\alpha\theta}}$ so that $|P_{t_s}f|\leq s$.
Then,
\begin{multline}\label{ref:1}
s^q\mu\big(\{x\in X\;\colon\;|f(x)|\geq 2s\}\big)\leq s^q\mu\big(\{x\in X\;\colon\;|f-P_{t_s}f|\geq s\}\big)\\
\leq 
s^{q-p}\|f-P_{t_s}f\|_{L^p(X,\mu)}^p\leq s^{q-p}t_s^{p\alpha}C_p^p\mathbf{Var}_{p,\mathcal{E}} (f)^p=C_p^p\mathbf{Var}_{p,\mathcal{E}} (f)^p,
\end{multline}
where the last inequality follows from $({\rm PPI}_p)$ and the last equality from $q-p+p(\theta-1)/\theta=0$. 
Let us now define $f_k:=\min\{(f-2^k)_+,2^k\}$, $k\in\mathbb{Z}$. We note that $0\le f_k \le f $, so that
\[
\|f_k\|_{B^{\alpha\theta/(\theta-1)}_{\infty,\infty}} \le \|f\|_{B^{\alpha\theta/(\theta-1)}_{\infty,\infty}} \le 1.
\]

Applying \eqref{ref:1} to $f_k$ with $s=2^k$ yields
\[
2^{kq}\mu\big(\{x\in X\;\colon\;|f_k(x)|\geq 2^{k+1}\}\big)\leq C_p^p\mathbf{Var}_{p,\mathcal{E}} (f_k)^p
\]
so that from Lemma \ref{L:local_norm_approx:a} we deduce
\[
\sum_{k\in\mathbb{Z}}2^{kq}\mu\big(\{x\in X\;\colon\;|f_k(x)|\geq 2^{k+1}\}\big)\leq C_p^p\sum_{k\in\mathbb{Z}}\mathbf{Var}_{p,\mathcal{E}} (f_k)^p\leq 2^p(p+1)^pC_p^p\mathbf{Var}_{p,\mathcal{E}} (f)^p.
\]
Further,
\begin{align*}
\|f\|_{L^q(X,\mu)}^q&=\int_0^\infty q s^{q-1}\mu\big(\{x\in X\,\colon\,|f(x)| \ge s\}\big)\,ds\\
&= \sum_{k\in\mathbb{Z}}\int_{2^{k+1}}^{2^{k+2}}q s^{q-1}\mu\big(\{x\in X\,\colon\,|f(x)| \ge s\}\big)\,ds\\
&\leq \sum_{k\in\mathbb{Z}}\int_{2^{k+1}}^{2^{k+2}}q s^{q-1}\mu\big(\{x\in X\,\colon\,|f(x)| \ge 2^{k+1}\}\big)\,ds\\
&=(2^{2q}-2^q)\sum_{k\in\mathbb{Z}}2^{kq}\mu\big(\{x\in X\,\colon\,|f(x)| \ge 2^{k+1}\}\big)\\
&\leq (2^{2q}-2^q)\sum_{k\in\mathbb{Z}}2^{kq}\mu\big(\{x\in X\,\colon\,|f_k(x)| \ge 2^{k}\}\big)\\
&=(2^{3q}-2^{2q})\sum_{k\in\mathbb{Z}}2^{kq}\mu\big(\{x\in X\,\colon\,|f_k(x)| \ge 2^{k+1}\}\big)
\leq 2^{3q} 2^p(p+1)^pC_p^p\mathbf{Var}_{p,\mathcal{E}} (f)^p.
\end{align*}
One concludes that for every $f\in W^{1,p}(\mathcal E)$ (or $BV(\mathcal E)$ if $p=1$) such that  $f\geq 0$
\begin{equation}\label{E:Gagliardo-Nirenberg_01}
\| f \|_{L^q(X,\mu)} \le 2^32^\theta(p+1)^\theta C_p^\theta\mathbf{Var}_{p,\mathcal{E}} (f)^\theta \| f \|^{1-\theta}_{B_{\infty,\infty}^{\alpha_p \theta/(\theta-1)}},
\end{equation}
where $\theta=\frac{p}{q}$. On the other hand, the heat kernel upper bound $p_{t}(x,y)\leq C_h t^{-\beta}$ implies
\[
\| P_t f \|_{L^\infty(X,\mu)}\le \frac{C_h}{t^{\beta}} \|  f \|_{L^1(X,\mu)}
\]
and by definition, see~\eqref{E:Ledoux_norm}, it follows from~\eqref{E:Gagliardo-Nirenberg_01} that
\[
\| f \|_{L^q(X,\mu)} \le 2^32^\theta(p+1)^\theta C_p^\theta C_h^{1-\theta} \mathbf{Var}_{p,\mathcal{E}} (f)^\theta \|f\|_{L^1(X,\mu)}^{1-\theta}
\]
for $\beta= \frac{\alpha \theta}{1-\theta}=\frac{\alpha_p p}{q-p}$, equivalently $\frac{1}{q}=\frac{1}{p}-\frac{\alpha_p}{q\beta}$, as we wanted to prove. If one does not assume $f \ge 0$, then the previous inequality applied to $| f |$ yields the expected result, since it is clear from the definition that $\mathbf{Var}_{p,\mathcal{E}} (| f |) \le \mathbf{Var}_{p,\mathcal{E}} ( f )$.
\end{proof}

\subsubsection{Gagliardo-Nirenberg}
Thanks to general results proved in~\cite{BCLS95}, Theorem~\ref{T:Gagliardo-Nirenberg} actually implies the full scale of Gagliardo-Nirenberg inequalities. We discuss them according to the value of $p \alpha_p$.

\begin{corollary}\label{C:Gagliardo-Nirenberg:corollary1}
Assume that $(\mathrm {PPI}_p)$ is satisfied for some $p \ge 1 $ such that $p \alpha_p <\beta$. Then, there exists a constant $C_{p,r,s}>0$ such that for every $f \in W^{1,p}(\mathcal E)$ (or $BV(\mathcal{E})$ for $p=1$),
\begin{equation}\label{E:Gagliardo-Nirenberg:corollary1}
\| f \|_{L^r(X,\mu)} \le C_{p,r,s} \mathbf{Var}_{p,\mathcal{E}} (f)^{\theta} \| f \|^{1-\theta}_{L^s(X,\mu)},
\end{equation}
where $r,s \in [1,+\infty]$ and $\theta \in (0,1]$ are related by the identity
\[
\frac{1}{r}=\theta \Big(\frac{1}{p}-\frac{\alpha_p}{\beta} \Big)+\frac{1-\theta}{s}.
\]
\end{corollary}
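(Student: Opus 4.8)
The strategy is to interpolate between the ``endpoint'' Gagliardo--Nirenberg inequality of Theorem~\ref{T:Gagliardo-Nirenberg}, which already gives the estimate for the single pair $(r,s)=(q,1)$ with $q=p(1+\alpha_p/\beta)$ and $\theta=p/q$, and the trivial $L^s$ bounds, invoking the general machinery of~\cite{BCLS95}. More precisely, once $(\mathrm{PPI}_p)$ is known, Lemma~\ref{L:local_norm_approx:a} identifies the functional $\mathbf{Var}_{p,\mathcal E}$ as satisfying the condition $(H_p)$ of~\cite[Section 2]{BCLS95} (see Remark~\ref{R:local_norm_approx:a}), and the heat-kernel bound~\eqref{E:subGauss-upper} together with $(\mathrm{PPI}_p)$ places us exactly in the hypotheses under which the abstract results of~\cite{BCLS95} upgrade a single Sobolev-type inequality to the whole family. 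So the first step is to translate our notation into theirs: the exponent $\beta$ here plays the role of the dimension parameter $\nu$, the pseudo-Poincar\'e exponent $\alpha_p$ governs the scaling, and the range condition $p\alpha_p<\beta$ is precisely the ``subcritical'' regime in which the target exponent $r$ can be taken up to (but not including) the critical Sobolev exponent.

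\textbf{Key steps.} First I would recall Theorem~\ref{T:Gagliardo-Nirenberg} to get~\eqref{E:Gagliardo-Nirenberg} for the base pair. Second, I would observe that, under $p\alpha_p<\beta$, we have $q=p(1+\alpha_p/\beta)>p$ is finite, so the base inequality is a genuine gain-of-integrability estimate; combined with the pseudo-Poincar\'e inequality and the ultracontractivity~\eqref{E:subGauss-upper} this is exactly the input needed for~\cite[Theorem 2.1 / Corollary 2.1]{BCLS95} (or the relevant statement there). Third, I would apply that abstract interpolation result: starting from the pair $(q,1)$ and using H\"older interpolation in $L^s$ on one factor and the self-improvement mechanism of~\cite{BCLS95} (iterating the truncation argument of Lemma~\ref{L:local_norm_approx:a}) on the other, one obtains~\eqref{E:Gagliardo-Nirenberg:corollary1} for every admissible triple $(r,s,\theta)$ satisfying the stated linear relation $\tfrac1r=\theta(\tfrac1p-\tfrac{\alpha_p}{\beta})+\tfrac{1-\theta}{s}$, with a constant $C_{p,r,s}$ depending on $p,r,s$ (and implicitly on $C_p,C_h,\beta,\alpha_p$) but not on $f$. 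Finally, as in the proof of Theorem~\ref{T:Gagliardo-Nirenberg}, it suffices to argue for $f\ge0$ and then replace $f$ by $|f|$, using $\mathbf{Var}_{p,\mathcal E}(|f|)\le\mathbf{Var}_{p,\mathcal E}(f)$.

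\textbf{Main obstacle.} The delicate point is checking that our abstract framework really meets the hypotheses of the relevant theorem in~\cite{BCLS95} on the nose---in particular that $\mathbf{Var}_{p,\mathcal E}$ qualifies as an admissible ``gradient'' functional there (which is where Lemma~\ref{L:local_norm_approx:a}, i.e. condition $(H_p)$, is essential, since without the cut-off estimate the interpolation scheme of~\cite{BCLS95} does not close), and that the exponent bookkeeping matches: one must verify that the linear relation among $1/r$, $1/s$ and $\theta$ claimed here is precisely the one produced by their interpolation, and that the constraint $p\alpha_p<\beta$ is what guarantees $\theta\in(0,1]$ and $r<\infty$ throughout the admissible range (the borderline $p\alpha_p=\beta$ being the Trudinger--Moser case treated separately via Corollary~\ref{C:Trudinger-Moser_1}). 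Once these identifications are in place, the proof is essentially a citation of~\cite{BCLS95} combined with Theorem~\ref{T:Gagliardo-Nirenberg}, so I would keep the written proof short and point to the precise statements used.
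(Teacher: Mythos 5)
Your proposal is correct and follows essentially the same route as the paper: the paper's proof is a one-line citation of Theorem~\ref{T:Gagliardo-Nirenberg} together with the abstract self-improvement result of \cite{BCLS95} (specifically \cite[Theorem 3.1]{BCLS95}, for which the contraction/truncation property guaranteed by Lemma~\ref{L:local_norm_approx:a} is the relevant hypothesis). The only difference is bookkeeping of the citation within \cite{BCLS95}, not of substance.
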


\begin{proof}
This follows from Theorem~\ref{T:Gagliardo-Nirenberg} and~\cite[Theorem 3.1]{BCLS95}.
\end{proof}

\begin{remark}
Several special cases are worth pointing out explicitly:
\begin{enumerate}[leftmargin=1.5em,label={\rm (\roman*)}]
\item If $r=s$, then $r=\frac{p\beta}{\beta-p \alpha_p}$ and~\eqref{E:Gagliardo-Nirenberg:corollary1} yields the global Sobolev inequality
\[
\| f \|_{L^r(X,\mu)} \le C_{p} \mathbf{Var}_{p,\mathcal{E}} (f)
\]
\item If $r=p>1$ and $s=1$, then~\eqref{E:Gagliardo-Nirenberg:corollary1} yields the global Nash inequality
\[
\| f \|_{L^p(X,\mu)} \le C_{p} \mathbf{Var}_{p,\mathcal{E}} (f)^{\theta} \| f \|^{1-\theta}_{L^1(X,\mu)}
\]
with $\theta=\frac{(p-1)\beta}{p(\alpha_p+\beta)-\beta}$.
\item If $s=+\infty$, then~\eqref{E:Gagliardo-Nirenberg:corollary1} yields
\[
\| f \|_{L^r(X,\mu)} \le C_{p,r} \mathbf{Var}_{p,\mathcal{E}} (f)^{\theta} \| f \|^{1-\theta}_{L^\infty(X,\mu)}
\]
with $\theta=\frac{p\beta}{r(\beta-p \alpha_p)}$
\end{enumerate}
\end{remark}

We now turn to the case $p \alpha_p >\beta$.

\begin{corollary}\label{C:Gagliardo-Nirenberg:corollary2}
Assume that $(\mathrm {PPI}_p)$ is satisfied for some $p \ge 1 $ such that $p \alpha_p  > \beta$. Then, there exists a constant $C_p>0$ such that for every $f \in W^{1,p}(\mathcal E)$ (or $BV(\mathcal{E})$ for $p=1$), and $s \ge 1$,
\begin{equation}\label{E:Gagliardo-Nirenberg:corollary2}
\| f \|_{L^\infty(X,\mu)} \le C_{p} \mathbf{Var}_{p,\mathcal{E}} (f)^{\theta} \| f \|^{1-\theta}_{L^s(X,\mu)},
\end{equation}
where $\theta \in (0,1)$ is given by $\theta=\frac{p\beta}{p\beta+s(p\alpha_p-\beta)}$.
\end{corollary}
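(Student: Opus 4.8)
The plan is to obtain this Morrey-type bound from the single Gagliardo--Nirenberg inequality of Theorem~\ref{T:Gagliardo-Nirenberg} by feeding it into the self-improvement machinery of~\cite{BCLS95}, just as Corollary~\ref{C:Gagliardo-Nirenberg:corollary1} was derived in the subcritical regime $p\alpha_p<\beta$, except that here one invokes the companion statement of~\cite{BCLS95} valid in the supercritical range $p\alpha_p>\beta$. Theorem~\ref{T:Gagliardo-Nirenberg} supplies, at the distinguished exponent $q_0=p\bigl(1+\tfrac{\alpha_p}{\beta}\bigr)$,
\[
\|f\|_{L^{q_0}(X,\mu)}\le C\,\mathbf{Var}_{p,\mathcal E}(f)^{1/\sigma}\,\|f\|_{L^1(X,\mu)}^{1-1/\sigma},\qquad\sigma:=\frac{q_0}{p}=1+\frac{\alpha_p}{\beta}>1,
\]
and Lemma~\ref{L:local_norm_approx:a} (that is, condition $(H_p)$ of~\cite{BCLS95}, cf.\ Remark~\ref{R:local_norm_approx:a}) guarantees that $\mathbf{Var}_{p,\mathcal E}$ does not increase under the truncations $f_{\rho,k}$. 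When $p\alpha_p>\beta$ the formal Sobolev conjugate $p^{\ast}$, given by $\tfrac1{p^{\ast}}=\tfrac1p-\tfrac{\alpha_p}{\beta}$, is negative, so one is in the Morrey range and, in conjunction with Lemma~\ref{L:local_norm_approx:a}, the displayed inequality self-improves all the way up to the $L^{\infty}$--$L^{s}$ family~\eqref{E:Gagliardo-Nirenberg:corollary2}.

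To exhibit the mechanism one runs a De Giorgi--Moser iteration. Fix $s\ge1$, let $\ell_k=c\,(1-2^{-k})\nearrow c$ with $c>0$ a free parameter, and apply Theorem~\ref{T:Gagliardo-Nirenberg} to the truncations $w_k:=(|f|-\ell_k)_+$. These obey $\mathbf{Var}_{p,\mathcal E}(w_k)\le\mathbf{Var}_{p,\mathcal E}(f)$ because $|w_k(x)-w_k(y)|\le|f(x)-f(y)|$ pointwise (the same contraction that underlies Lemma~\ref{L:local_norm_approx:a}), and on $\{|f|>\ell_{k+1}\}$ one has $w_k\ge c\,2^{-(k+1)}$; hence Chebyshev's inequality turns the $L^{q_0}$ estimate into a recursion of De Giorgi type, $a_{k+1}\le D\,b^{\,k}\,a_k^{1+\nu}$, for a suitable energy $a_k$ measuring how much $|f|$ exceeds level $\ell_k$ (built from $\|w_k\|_{L^1}$ and $\mu(\{|f|>\ell_k\})$), the hypothesis $p\alpha_p>\beta$ being exactly what forces the gain exponent $\nu$ to be strictly positive. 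Taking $c$ equal to a fixed multiple of $\mathbf{Var}_{p,\mathcal E}(f)^{\theta}\,\|f\|_{L^{s}(X,\mu)}^{1-\theta}$ makes $a_0$ small enough for the standard fast-geometric-convergence lemma to give $a_k\to0$, i.e.\ $|f|\le c$ $\mu$-a.e. The exponent $\theta=\tfrac{p\beta}{p\beta+s(p\alpha_p-\beta)}$ is then the only one consistent with homogeneity and with the dimensional scaling attached to $p_t(x,y)\le C_h t^{-\beta}$; in the sub-Gaussian fractal model ($\beta=d_H/d_W$) this reads $\mathbf{Var}_{p,\mathcal E}\bigl(f(\cdot/\lambda)\bigr)=\lambda^{\,d_H/p-\alpha_p d_W}\mathbf{Var}_{p,\mathcal E}(f)$ and $\|f(\cdot/\lambda)\|_{L^{s}}=\lambda^{d_H/s}\|f\|_{L^{s}}$, and balancing the powers of $\lambda$ in $\|f\|_{L^{\infty}}\le C\,\mathbf{Var}_{p,\mathcal E}(f)^{\theta}\|f\|_{L^{s}}^{1-\theta}$ returns exactly this $\theta$.

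I expect the real work to be the bookkeeping of this iteration \emph{uniformly in the free parameter $s$}: Theorem~\ref{T:Gagliardo-Nirenberg} only delivers the inequality with $\|f\|_{L^1}$ on the right, so recovering the full $L^{s}$ scale --- and not merely the case $s=1$ --- is precisely what the abstract argument of~\cite{BCLS95}, resting on condition $(H_p)$ (Remark~\ref{R:local_norm_approx:a}), is designed to do, since it lets one chase exponents through the iteration without assuming an $L^{q}$ pseudo-Poincar\'e inequality for every $q$. For that reason the most economical way to present the proof is to check that the hypotheses of the supercritical self-improvement theorem of~\cite{BCLS95} are met here --- namely Theorem~\ref{T:Gagliardo-Nirenberg} together with condition $(H_p)$ --- and to cite it, the iteration sketched above being the content of that theorem.
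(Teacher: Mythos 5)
Your proposal is correct and follows essentially the same route as the paper: the paper's proof is precisely to combine Theorem~\ref{T:Gagliardo-Nirenberg} with the supercritical self-improvement result \cite[Theorem 3.2]{BCLS95}, whose truncation hypothesis $(H_p)$ is supplied by Lemma~\ref{L:local_norm_approx:a} (cf.\ Remark~\ref{R:local_norm_approx:a}). Your additional De Giorgi--Moser sketch and scaling check of $\theta$ are just an unpacking of what that cited theorem does, not a different argument.
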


\begin{proof}
This follows from Theorem~\ref{T:Gagliardo-Nirenberg} and~\cite[Theorem 3.2]{BCLS95}.
\end{proof}

\begin{remark}\label{R:Gagliardo-Nirenberg:corollary2}
For $s=1$, we have that
\[
\| f \|_{L^s(X,\mu)}=\| f \|_{L^1(X,\mu)} \le \| f \|_{L^\infty(X,\mu)} \mu ( \mathrm{Supp} (f)),
\]
where $\mathrm{Supp} (f)$ denotes the support of $f$. Thus,~\eqref{E:Gagliardo-Nirenberg:corollary2} yields for any $f\in W^{1,p}(\mathcal{E})$ (or $BV(\mathcal{E})$)
\[
\| f \|_{L^\infty(X,\mu)} \le C_{p} \mathbf{Var}_{p,\mathcal{E}} (f) \mu ( \mathrm{Supp} (f))^{\frac{\alpha_p}{\beta}-\frac{1}{p}}.
\]
\end{remark}

\subsubsection{Trudinger-Moser}
The case $p \alpha_p =\beta$ corresponds to Trudinger-Moser inequalities. We start with the case $p=1$ that is particularly well-suited for applications to fractal spaces.

\begin{corollary}\label{C:Trudinger-Moser}
Assume that $(\mathrm {PPI}_1)$ is satisfied and that $\alpha_1=\beta$. Then, there exists a constant $C>0$ such that for every $ f \in BV(\mathcal{E})$:
\begin{equation*}
\| f \|_{L^\infty(X,\mu)} \le C \mathbf{Var}_{\mathcal{E}} (f).
\end{equation*}
\end{corollary}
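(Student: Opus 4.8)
The plan is to deduce this $L^\infty$ bound as the critical-exponent limiting case of the Gagliardo--Nirenberg inequality established in Theorem~\ref{T:Gagliardo-Nirenberg}, following the scheme of~\cite{BCLS95}. The starting point is inequality~\eqref{E:Gagliardo-Nirenberg_01} from the proof of Theorem~\ref{T:Gagliardo-Nirenberg}, applied with $p=1$: for every nonnegative $f\in BV(\mathcal E)$ and every $q>1$,
\[
\| f \|_{L^q(X,\mu)} \le 2^{3}\,2^{1/q}\,2^{1/q}\,C_1^{1/q}\,\mathbf{Var}_{\mathcal E}(f)^{1/q}\,\| f \|_{B^{\alpha_1 /(q-1)\cdot q}_{\infty,\infty}}^{1-1/q},
\]
where $\theta=1/q$. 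The key observation is that when $\alpha_1=\beta$ the Besov-type seminorm $\| P_t f\|_{L^\infty}$ is controlled by $C_h t^{-\beta}\|f\|_{L^1}=C_h t^{-\alpha_1}\|f\|_{L^1}$, so that along the critical scaling one has a family of $L^q$ estimates whose constants and exponents are uniform enough to let $q\to\infty$.

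First I would record, using the heat kernel bound~\eqref{E:subGauss-upper} with $\beta=\alpha_1$, that
\[
\|P_t f\|_{L^\infty(X,\mu)} \le C_h\, t^{-\alpha_1}\,\|f\|_{L^1(X,\mu)},
\]
hence in the notation of~\eqref{E:Ledoux_norm}, the relevant seminorm with exponent $\alpha_1\theta/(\theta-1)$ is finite whenever $q$ is chosen so that this exponent equals $-\alpha_1$, i.e.\ exactly in the Trudinger--Moser borderline. Substituting into~\eqref{E:Gagliardo-Nirenberg_01} gives, for each integer $q\ge 2$,
\[
\| f \|_{L^q(X,\mu)} \le c\,q\,C_1^{1/q}\,C_h^{1-1/q}\,\mathbf{Var}_{\mathcal E}(f)^{1/q}\,\| f \|_{L^1(X,\mu)}^{1-1/q},
\]
for an absolute constant $c$ (tracking the $2^3 2^\theta(p+1)^\theta$ factor with $p=1$). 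Then I would invoke the abstract result~\cite[Theorem 3.2]{BCLS95} (or its $p=1$ specialization), which is precisely designed to upgrade such a scale of $L^q$ inequalities with constants growing at most linearly in $q$ into the endpoint $L^\infty$ bound $\|f\|_{L^\infty}\le C\,\mathbf{Var}_{\mathcal E}(f)$ on a finite measure space; this is where condition $(H_1)$ from~\cite{BCLS95}, i.e.\ Lemma~\ref{L:local_norm_approx:a}, together with $(\mathrm{PPI}_1)$ and the ultracontractivity~\eqref{E:subGauss-upper}, feeds directly into the cited theorem's hypotheses.

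The main obstacle, and the step requiring care, is verifying that the hypotheses of~\cite[Theorem 3.2]{BCLS95} are met in exactly the form stated there, in particular the uniformity of constants as $q\to\infty$ and the behavior of $\|f\|_{L^1}$ versus $\|f\|_{L^s}$ for the reference exponent $s$; one must also confirm that $\mu(X)<\infty$ (or otherwise argue that the $L^\infty$ statement is scale-invariant) so that the passage to the sup-norm is legitimate. If the measure space is not finite, the statement should be read with the supremum taken over the (necessarily bounded) support, matching Remark~\ref{R:Gagliardo-Nirenberg:corollary2}. Once the translation of hypotheses is in place, the conclusion is immediate from~\cite[Theorem 3.2]{BCLS95}, and the reduction to nonnegative $f$ is handled as in Theorem~\ref{T:Gagliardo-Nirenberg} by noting $\mathbf{Var}_{\mathcal E}(|f|)\le \mathbf{Var}_{\mathcal E}(f)$.
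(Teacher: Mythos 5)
Your endpoint citation---Lemma~\ref{L:local_norm_approx:a} gives condition $(H_1)$, and Theorem~\ref{T:Gagliardo-Nirenberg} combined with \cite[Theorem 3.2]{BCLS95} yields the bound---is exactly the paper's proof, which consists of precisely these references. The problem is the bridging argument you offer to explain how the hypotheses of \cite[Theorem 3.2]{BCLS95} are met: it contains a step that fails. Substituting the ultracontractivity bound $\|P_tf\|_{L^\infty(X,\mu)}\le C_h t^{-\beta}\|f\|_{L^1(X,\mu)}$ into~\eqref{E:Gagliardo-Nirenberg_01} is only legitimate when the exponent of the seminorm~\eqref{E:Ledoux_norm} matches $\beta$, i.e. when $\alpha_1\theta/(1-\theta)=\beta$; with $p=1$, $\theta=1/q$ and $\alpha_1=\beta$ this forces $q=2$. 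For $q>2$ the quantity $\sup_{t>0}t^{\alpha_1/(q-1)}\|P_tf\|_{L^\infty(X,\mu)}$ is \emph{not} controlled by $C_h\|f\|_{L^1(X,\mu)}$ (the bound $t^{\alpha_1/(q-1)-\beta}$ blows up as $t\to0^+$), so the asserted family $\|f\|_{L^q}\le c\,q\,C_1^{1/q}C_h^{1-1/q}\mathbf{Var}_{\mathcal{E}}(f)^{1/q}\|f\|_{L^1}^{1-1/q}$ for all integers $q\ge2$ does not follow from what you wrote. Moreover, even granting such a family, ``constants growing at most linearly in $q$, let $q\to\infty$'' is the Moser--Trudinger mechanism: it yields exponential integrability (this is \cite[Theorem 3.4]{BCLS95}, used in Corollary~\ref{C:Trudinger-Moser_1} for $p>1$), not an $L^\infty$ bound, so your description of what \cite[Theorem 3.2]{BCLS95} does is not the right one.

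What the cited theorem actually needs here is much less: the single Nash-type inequality that Theorem~\ref{T:Gagliardo-Nirenberg} gives in this critical case, namely $\|f\|_{L^2(X,\mu)}\le C\,\mathbf{Var}_{\mathcal{E}}(f)^{1/2}\|f\|_{L^1(X,\mu)}^{1/2}$ (the case $q=p(1+\alpha_p/\beta)=2$), together with the truncation property $(H_1)$ from Lemma~\ref{L:local_norm_approx:a}. The iteration over the dyadic truncations $f_k=(f-2^k)_+\wedge 2^k$ performed in \cite[Theorem 3.2]{BCLS95} then produces $\|f\|_{L^\infty(X,\mu)}\le C\,\mathbf{Var}_{\mathcal{E}}(f)$ directly; no scale of $L^q$ estimates is required, and no finiteness of $\mu(X)$ either, so your finite-measure caveat and the appeal to Remark~\ref{R:Gagliardo-Nirenberg:corollary2} are unnecessary (the classical model is $\|f\|_{L^\infty(\mathbb{R})}\le\|f'\|_{L^1(\mathbb{R})}$ on the infinite-measure line). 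With the bridging argument replaced by this correct reading of the hypotheses, your proof collapses to the paper's.
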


\begin{proof}
By virtue of Lemma~\ref{L:local_norm_approx:a}, the condition $(H_1)$ from~\cite[Section 2]{BCLS95} is satisfied, hence Theorem~\ref{T:Gagliardo-Nirenberg} and~\cite[Theorem 3.2]{BCLS95} yield the result. 
\end{proof}

We finally conclude with the Trudinger-Moser inequalities corresponding to $p>1$.

\begin{corollary}\label{C:Trudinger-Moser_1}
Assume further that $(\mathrm {PPI}_p)$ is satisfied and that $p\alpha_p=\beta$ with $p>1$. Then, there exist constants $c,C>0$ such that 
\begin{equation*}
\int_X \left( e^{ c |f|^{\frac{p}{p-1}} }-1 \right) d\mu  \le C \| f \|_{L^1(X,\mu)}
\end{equation*}
holds for every $ f \in W^{1,p}(\mathcal{E})$ with $\mathbf{Var}_{p,\mathcal{E}} (f)=1$.
\end{corollary}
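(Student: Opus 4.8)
The plan is to follow the strategy of~\cite{BCLS95} for Trudinger–Moser inequalities, which turns the Gagliardo–Nirenberg inequality of Theorem~\ref{T:Gagliardo-Nirenberg} together with the cut-off estimate of Lemma~\ref{L:local_norm_approx:a} (condition $(H_p)$, see Remark~\ref{R:local_norm_approx:a}) into exponential integrability. First I would record the relevant Gagliardo–Nirenberg consequence: since $p\alpha_p=\beta$, Theorem~\ref{T:Gagliardo-Nirenberg} gives $q=p(1+\alpha_p/\beta)=2p$ and hence, for every $g\in W^{1,p}(\mathcal E)$,
\[
\| g \|_{L^{2p}(X,\mu)} \le c_p C_p^{1/2} C_h^{1/2}\, \mathbf{Var}_{p,\mathcal{E}}(g)^{1/2}\, \| g \|_{L^{p}(X,\mu)}^{1/2}.
\]
Iterating this inequality along a dyadic sequence of exponents $p_j=2^j p$ is the heart of the argument: one bounds $\|g\|_{L^{p_{j+1}}}$ in terms of $\|g\|_{L^{p_j}}$ and $\mathbf{Var}_{p,\mathcal E}(g)$, and the key point is that the constants grow at a controlled (geometric-in-$j$) rate so that the resulting bound on $\|g\|_{L^{p_j}}$ grows like $(Cj)^{j/p}$ up to the right normalization, which is exactly the growth rate of the $L^{p_j}$ norms of a function with $e^{c|f|^{p/(p-1)}}$ integrable.

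The subtlety that makes this more than a bare iteration — and the reason Lemma~\ref{L:local_norm_approx:a} is invoked — is that at each dyadic step one must replace the raw bound, which would involve $\mathbf{Var}_{p,\mathcal E}$ of high powers of $f$ (not controlled), by applying Gagliardo–Nirenberg not to $f$ itself but to the truncations $f_{\rho,k}=(f-\rho^k)_+\wedge \rho^k(\rho-1)$ and then summing over $k\in\mathbb Z$. Lemma~\ref{L:local_norm_approx:a} guarantees $\sum_k \mathbf{Var}_{p,\mathcal E}(f_{\rho,k})^p \le (2(p+1))^p \mathbf{Var}_{p,\mathcal E}(f)^p=(2(p+1))^p$ under the normalization $\mathbf{Var}_{p,\mathcal E}(f)=1$, so the level-set contributions telescope correctly. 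Concretely, I would follow~\cite[proof of Theorem 3.2]{BCLS95}: estimate $\mu(\{|f|\ge \rho^{k}\})$ using $(\mathrm{PPI}_p)$ and the $L^\infty$ bound $\|P_{t}f\|_\infty\le C_h t^{-\beta}\|f\|_{L^1}$ applied to truncations, sum the geometric series in $k$, and convert the resulting distribution-function bound into the Taylor-series bound $\sum_{n} \tfrac{c^n}{n!}\int_X |f|^{np/(p-1)}\,d\mu \le C\|f\|_{L^1(X,\mu)}$, which is the claim.

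The main obstacle I anticipate is bookkeeping the constants through the dyadic iteration so that the series defining $e^{c|f|^{p/(p-1)}}-1$ converges: one needs the $L^{p_j}$ bounds to be of the form $\|f\|_{L^{p_j}}^{p_j}\le C^{j} (j!)^{(p-1)}\|f\|_{L^1}$ (or the Stirling-equivalent $(Cj)^{j(p-1)}$), and squeezing out exactly the exponent $p/(p-1)$ — rather than something weaker — requires tracking that the geometric factor $C_p^{1/2}C_h^{1/2}$ compounds to $C_p^{\sum 2^{-j}}=C_p^{O(1)}$ while the combinatorial factor from $2(p+1)$ raised to the iteration depth produces the factorial. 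Since all of this is carried out in~\cite[Section 3]{BCLS95} under precisely the hypotheses $(H_p)$, $(\mathrm{PPI}_p)$ and the ultracontractive bound~\eqref{E:subGauss-upper} with $p\alpha_p=\beta$ — all of which we have verified — the cleanest route is to invoke~\cite[Theorem 3.2]{BCLS95} directly, having checked in Corollary~\ref{C:Trudinger-Moser} and Lemma~\ref{L:local_norm_approx:a} that its hypotheses hold in our setting, and to include only the short computation identifying the exponent $p/(p-1)$ and the normalization $\mathbf{Var}_{p,\mathcal E}(f)=1$.
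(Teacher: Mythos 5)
Your overall route coincides with the paper's: verify condition $(H_p)$ via Lemma~\ref{L:local_norm_approx:a} (cf.\ Remark~\ref{R:local_norm_approx:a}), feed in Theorem~\ref{T:Gagliardo-Nirenberg}, and conclude by the general machinery of~\cite{BCLS95}. However, two concrete points would make your write-up fail as stated. First, the reduction is mis-aimed: the statement you need from~\cite{BCLS95} is Theorem~3.4 (the critical-exponent, exponential-integrability result), not Theorem~3.2. Theorem~3.2 is the supercritical statement producing an $L^\infty$ bound; it is what the paper uses for Corollary~\ref{C:Gagliardo-Nirenberg:corollary2} (when $p\alpha_p>\beta$) and for the $p=1$ case in Corollary~\ref{C:Trudinger-Moser} (where $\alpha_1=\beta$ and the conclusion really is $\|f\|_{L^\infty(X,\mu)}\le C\,\mathbf{Var}_{\mathcal E}(f)$). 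In the regime $p\alpha_p=\beta$ with $p>1$ an $L^\infty$ bound is exactly what one cannot expect, so Theorem~3.2 neither applies nor has the conclusion you want; also, Corollary~\ref{C:Trudinger-Moser} is the parallel $p=1$ statement, not a hypothesis to be ``checked'' for the $p>1$ case.

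Second, the quantitative starting point of your iteration is miscomputed. With $p\alpha_p=\beta$, Theorem~\ref{T:Gagliardo-Nirenberg} gives $q=p\big(1+\tfrac{\alpha_p}{\beta}\big)=p+1$ (not $2p$), the exponents are $\tfrac{\beta}{\beta+\alpha_p}=\tfrac{p}{p+1}$ and $\tfrac{\alpha_p}{\beta+\alpha_p}=\tfrac{1}{p+1}$, and the right-hand norm is the $L^1$ norm, not the $L^p$ norm; that is,
\[
\|f\|_{L^{p+1}(X,\mu)}\le c\,\mathbf{Var}_{p,\mathcal E}(f)^{\frac{p}{p+1}}\,\|f\|_{L^1(X,\mu)}^{\frac{1}{p+1}}.
\]
Consequently there is no ``$L^{2^jp}\to L^{2^{j+1}p}$'' bootstrap available from Theorem~\ref{T:Gagliardo-Nirenberg} as you describe; the gain of integrability at criticality is obtained in~\cite{BCLS95} from this single inequality combined with the truncation property $(H_p)$ (which is where Lemma~\ref{L:local_norm_approx:a} enters) and level-set estimates, and this is precisely what their Theorem~3.4 packages. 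Replacing your citation of Theorem~3.2 by Theorem~3.4 of~\cite{BCLS95}, and either dropping the dyadic-power iteration or redoing it with the correct exponents along the lines of that proof, yields the paper's argument.
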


\begin{proof}
Once again, Lemma~\ref{L:local_norm_approx:a} implies condition $(H_p)$ from~\cite[Section 2]{BCLS95} for $p>1$, and the result follows from Theorem~\ref{T:Gagliardo-Nirenberg} and~\cite[Theorem 3.4]{BCLS95}.
\end{proof}

\subsection{Localized versions}
In order to be able to treat spaces that lack global estimates, as for instance hyperbolic spaces, $RCD(K,+\infty)$ spaces with $K<0$, or compact spaces where only the local time behavior is meaningful, in this section we adapt the previous ideas to obtain a local version of Theorem~\ref{T:Gagliardo-Nirenberg}. 
In the spirit of~\cite[Section 3.3.2]{SC02}, Theorem~\ref{T:Gagliardo-Nirenberg_loc} in fact provides a local inequality depending on a parameter $R$, which in the limit $R\to\infty$ recovers its global counterpart~\eqref{E:Gagliardo-Nirenberg}. 

\medskip



The local version of the property $({\rm PPI}_p)$ was introduced in Section~\ref{subS:PPIs} with the notation $(\mathrm {PPI}_p (R))$ for $p \ge 1$ and $R>0$ as follows: There exists a constant $C_p(R)>0$ such that for every $f \in W^{1,p}(\mathcal{E})$ (or $BV(\mathcal{E})$ for $p=1$),
\[
\| P_t f -f \|_{L^p(X,\mu)} \le C_p(R) t^{\alpha_p} \mathbf{Var}_{p,\mathcal{E}} (f)
\]
holds for every $t \in (0,R)$.
\begin{theorem}\label{T:Gagliardo-Nirenberg_loc}
Fix $R>0$, $p\geq 1$ and $\alpha>0$. Assume that the space $(X,d,\mu,\mathcal{E},\mathcal{F})$ satisfies:
\begin{enumerate}[leftmargin=2em,label={\rm (\roman*)}]
\item  The heat semigroup $P_t$ admits a measurable heat kernel $p_t(x,y)$ such that for some $C_h>0$ and $\beta >0$,
\begin{equation}\label{E:G-N-loc_UHKE}
p_{t}(x,y)\leq C_h t^{-\beta}
\end{equation} 
for $\mu\times\mu$-a.e.\ $(x,y)\in X\times X$ and each $0<t\leq R$;
\item The property $(\mathrm {PPI}_p (R))$, with constant $C_p(R)>0$. 
\end{enumerate}
Then, there exist $C_{p}>0$ such that for every $f \in L^p(X,\mu)$,
\begin{equation*}
  \| f \|_{L^q(X,\mu)} \le 4p(2p+2)^{\frac{\beta}{\beta+\alpha_p}}C_h^{\frac{\alpha_p}{\beta+\alpha_p}}\big(R^{-\alpha_p}\|f\|_{L^p(X,\mu)}+C_p(R) \mathbf{Var}_{p,\mathcal{E}} (f)\Big)^{\frac{\beta}{\beta+\alpha_p}}\|f\|_{L^1(X,\mu)}^{\frac{\alpha_p}{\beta+\alpha_p}},
\end{equation*}
where $\frac{1}{q}=\frac{1}{p}-\frac{ \alpha_p}{q \beta}$.
\end{theorem}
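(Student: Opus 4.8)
The plan is to mimic the proof of the global Theorem~\ref{T:Gagliardo-Nirenberg}, but using the localized pseudo-Poincar\'e inequality $(\mathrm{PPI}_p(R))$ in place of $(\mathrm{PPI}_p)$ and being careful that all the semigroup times that appear stay in the admissible range $(0,R)$. First I would reduce to $f\geq 0$ as in the global case, since $\mathbf{Var}_{p,\mathcal{E}}(|f|)\le\mathbf{Var}_{p,\mathcal{E}}(f)$. The key new ingredient is that when $(\mathrm{PPI}_p(R))$ only holds for $t\in(0,R)$, the estimate~\eqref{E:L:local_norm_approx:a_h2} analogue and the distributional bound~\eqref{ref:1} must be replaced by a version valid only for $s$ large enough, namely $s\geq s_0$ where $s_0$ is chosen so that $t_s=s^{(\theta-1)/(\alpha_p\theta)}<R$. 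The contribution from small $s$ (equivalently from the levels $k$ with $2^k<s_0$, so that $t_{2^k}\ge R$) will be controlled instead directly by the $L^p$ norm of $f$, using the crude bound $\|P_t f-f\|_{L^p}\le 2\|f\|_{L^p}$ together with the decay $t_s^{-p\alpha_p}\le R^{-p\alpha_p}$ for those $s$. This is exactly where the term $R^{-\alpha_p}\|f\|_{L^p(X,\mu)}$ in the statement comes from, and it parallels the trick used in the pair of inequalities $\|f\|_{p,\alpha,R}\le\|f\|_{p,\alpha}\le \tfrac{2}{R^\alpha}\|f\|_{L^p}+\|f\|_{p,\alpha,R}$ recalled just after Definition~\ref{D:pSobolev_class}.

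Concretely, the steps are: (1) introduce $\theta=p/q$ with $1/q=1/p-\alpha_p/(q\beta)$, equivalently $\beta=\alpha_p\theta/(1-\theta)$, and the semi-norm $\|f\|_{B^{\alpha_p\theta/(\theta-1)}_{\infty,\infty},R}:=\sup_{0<t<R}t^{-\alpha_p\theta/(\theta-1)}\|P_tf\|_{L^\infty}$; (2) normalize so this localized Besov semi-norm is $\le 1$, pick for each $s>0$ the time $t_s=s^{(\theta-1)/(\alpha_p\theta)}$, and split into the regime $t_s<R$ (use $(\mathrm{PPI}_p(R))$ to get $s^q\mu(\{|f|\ge 2s\})\le C_p(R)^p\mathbf{Var}_{p,\mathcal{E}}(f)^p$ as in~\eqref{ref:1}) and the regime $t_s\ge R$ (use $\|P_{t_s}f-f\|_{L^p}\le 2\|f\|_{L^p}$ and $s^{q-p}t_s^{p\alpha_p}\le R^{p\alpha_p}s^{q-p}$, noting that in this regime $s$ is bounded above so the power of $s$ is harmless after summation); (3) apply Lemma~\ref{L:local_norm_approx:a} with $\rho=2$ to the truncations $f_k=\min\{(f-2^k)_+,2^k\}$ to sum the level-set bounds, exactly as in the global proof, obtaining $\|f\|_{L^q}\le c_p\big(R^{-\alpha_p}\|f\|_{L^p}+C_p(R)\mathbf{Var}_{p,\mathcal{E}}(f)\big)^{\theta}\|f\|^{1-\theta}_{B^{\alpha_p\theta/(\theta-1)}_{\infty,\infty},R}$ with explicit constant $2p(2p+2)^{\theta}$ or similar; (4) finally bound the localized Besov semi-norm using hypothesis (i): for $0<t\le R$, $\|P_tf\|_{L^\infty}\le C_h t^{-\beta}\|f\|_{L^1}$, so $\|f\|_{B^{\alpha_p\theta/(\theta-1)}_{\infty,\infty},R}\le C_h\|f\|_{L^1}$ provided $\beta=\alpha_p\theta/(1-\theta)$, which is precisely the stated relation $1/q=1/p-\alpha_p/(q\beta)$. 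Substituting gives the claimed inequality with $C_h^{\alpha_p/(\beta+\alpha_p)}=C_h^{1-\theta}$ and $(2p+2)^{\beta/(\beta+\alpha_p)}=(2p+2)^{\theta}$.

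The main obstacle I anticipate is \textbf{bookkeeping the large-$s$ (i.e.\ $t_s\ge R$) contribution with a clean, $R$-uniform constant}: one has to check that summing the crude bound over the finitely many dyadic levels $k$ with $2^k<s_0$ does not introduce $R$-dependence beyond the advertised $R^{-\alpha_p}$, and that combining the two regimes still produces a single term of the form $(a+b)^\theta$ rather than $a^\theta+b^\theta$ (which is where the constant $2p$ and the use of $(\mathrm{PPI}_p(R))$ for the truncations $f_k$ — all of which satisfy the same hypotheses — enter). A secondary point to verify carefully is that the truncations $f_k$ and all the intermediate functions indeed lie in $W^{1,p}(\mathcal{E})$ (or $BV(\mathcal{E})$), so that Lemma~\ref{L:local_norm_approx:a} and $(\mathrm{PPI}_p(R))$ apply to them; this is standard since truncation is a normal contraction, but it should be mentioned. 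Everything else is a direct transcription of the proof of Theorem~\ref{T:Gagliardo-Nirenberg} with $\sup_{t>0}$ replaced by $\sup_{0<t<R}$ throughout.
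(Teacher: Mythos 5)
Your proposal follows the paper's proof essentially verbatim: the same localized semi-norm $\sup_{0<t<R}t^{-\alpha_p\theta/(\theta-1)}\|P_tf\|_{L^\infty}$, the same dyadic threshold $k_0$ with $2^{k_0}\simeq R^{\alpha_p\theta/(\theta-1)}$, Lemma~\ref{L:local_norm_approx:a} applied to the truncations for the levels $k\ge k_0$, and the localized ultracontractivity bound $\|P_tf\|_{L^\infty}\le C_ht^{-\beta}\|f\|_{L^1}$ for $t\in(0,R)$ at the end. The only slip is in the regime $t_s\ge R$: there the normalization no longer yields $|P_{t_s}f|\le s$, so the detour through $\|f-P_{t_s}f\|_{L^p}\le 2\|f\|_{L^p}$ (and the reversed comparison of $t_s^{p\alpha_p}$ with $R^{p\alpha_p}$) is not justified as written; the paper instead uses plain Chebyshev, $s^q\mu(\{|f|>s\})\le s^{q-p}\|f\|_{L^p}^p$, which is all that is needed and, integrated over $s\in(0,2^{k_0+1})$, produces exactly the $R^{-\alpha_p p}\|f\|_{L^p}^p$ contribution you anticipated.
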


\begin{proof}
The following is a modification of the arguments used in Theorem~\ref{T:Gagliardo-Nirenberg}. 
With $\theta:=\frac{p}{q}\in(0,1)$, we consider the localized semi-norm

\begin{equation}\label{E:Ledoux_norm2}
\|f\|_{B^{\alpha\theta/(\theta-1)}_{R,\infty,\infty}}=\sup_{t\in (0,R)} t^{-\alpha\theta/(\theta-1)} \| P_t f \|_{L^\infty(X,\mu)}.
\end{equation}
Let $f \in L^p(X,\mu)$ and assume $f\geq 0$. By homogeneity, we consider $\|f\|_{B^{\alpha\theta/(\theta-1)}_{R,\infty,\infty}}\leq 1$.

Let now $s>0$. If $s>R^{\frac{\alpha_p\theta}{\theta-1}}=(1/R)^{\frac{\alpha_p\theta}{1-\theta}}$, we take $t=t_s:=s^{\frac{\theta_p-1}{\alpha\theta}}=(1/s)^{\frac{1-\theta}{\alpha_p\theta}}<R$, so that $|P_{t_s} f|<s$. By virtue of the property $(\mathrm {PPI}_p (R))$,
\begin{align*}
s^q\mu\big(\{x\in X\,\colon\,|f(x)|\geq 2s\}\big)&\leq s^{q-p}\|f-P_{t_s}f\|_{L^p(X,\mu)}^p\\
&\leq s^{q-p}t_s^{-\alpha p}\mathbf{Var}_{p,\mathcal{E}} (f)^p=C_p(R)^p\mathbf{Var}_{p,\mathcal{E}} (f)^p.
\end{align*}
Thus, for any $k\geq k_0$ with $2^{k_0-1}<R^{\frac{\alpha\theta}{\theta-1}}\leq 2^{k_0}$ and $f_k:=(f-2^k)_+ \wedge 2^k$, 
\begin{equation*}
2^{kq}\mu\big(\{x\in X\,\colon\,|f_k(x)|\geq 2^{k+1}\}\big)\leq C_p(R)^p\mathbf{Var}_{p,\mathcal{E}} (f_k)^p
\end{equation*}
and hence Lemma~\ref{L:local_norm_approx:a} yields
\begin{equation*}
\sum_{k=k_0}^\infty 2^{kq}\mu\big(\{x\in X\,\colon\,|f_k(x)|\geq 2^{k+1}\}\big)\leq  C_p(R)^p\sum_{k\in\mathbb{Z}} \mathbf{Var}_{p,\mathcal{E}} (f_k)^p\leq  C_p(R)^p2^p(p+1)^p\mathbf{Var}_{p,\mathcal{E}} (f)^p.
\end{equation*}
If $s< 2^{k_0}$, we write
\begin{equation*}
s^q\mu\big(\{x\in X\,\colon\,|f(x)|> s\}\big)\leq s^{q-p}\|f\|_{L^p(X,\mu)}^p.
\end{equation*}
Using the previous two estimates, and setting $k_0>0$ to be such that $2^{k_0-1}<R^{\frac{\alpha\theta}{\theta-1}}\leq 2^{k_0}$, we obtain
\begin{align*}
\|f\|_{L^q(X\,u)}^q&=\int_0^\infty qs^{q-1}\mu\big(\{x\in X\,\colon\,|f(x)|>s\}\big)\,ds\\
&=\int_0^{2^{k_0+1}}qs^{q-1}\mu\big(\{x\in X\,\colon\,|f(x)|>s\}\big)\,ds+\int_{2^{k_0+1}}^\infty qs^{q-1}\mu\big(\{x\in X\,\colon\,|f(x)|>s\}\big)\,ds\\
&\leq \|f\|_{L^p(X,\mu)}^p\int_0^{2^{k_0+1}}qs^{q-p-1}ds+\sum_{k=k_0}^\infty\int_{2^{k+1}}^{2^{k+2}}qs^{q-1}\mu\big(\{x\in X\,\colon\,|f(x)|>s\}\big)\,ds\\
&\leq \|f\|_{L^p(X,\mu)}^p\frac{q2^{(k_0+1)(q-p)}}{q-p}+\sum_{k=k_0}^\infty\int_{2^{k+1}}^{2^{k+2}}qs^{q-1}\mu\big(\{x\in X\,\colon\,|f(x)|>2^{k+1}\}\big)\,ds\\
&\leq \|f\|_{L^p(X,\mu)}^p\frac{q4^{q-p}}{q-p}R^{\frac{\alpha\theta(q-p)}{\theta-1}}+2^q(2^q-1)\sum_{k=k_0}^\infty {2^{qk}}\mu\big(\{x\in X\,\colon\,|f(x)|>2^{k+1}\}\big)\\
&\leq \|f\|_{L^p(X,\mu)}^p\frac{q4^{q-p}}{q-p}R^{\frac{\alpha\theta(q-p)}{\theta-1}}+2^q(2^q-1)C_p(R)^p2^p(p+1)^p\mathbf{Var}_{p,\mathcal{E}} (f)^p\\
&\leq 2^{2q+p}(p+1)^p\big(\|f\|_{L^p(X,\mu)}^pR^{\frac{\alpha\theta(q-p)}{\theta-1}}+C_p(R)^p\mathbf{Var}_{p,\mathcal{E}} (f)^p\big).
\end{align*}
Since $\frac{\alpha_p\theta(q-p)}{\theta-1}=\frac{\alpha_p p(q-p)}{q(p/q-1)}=-\alpha_p p$, the latter inequality implies
\begin{multline*}
\|f\|_{L^q(X\,u)}^q\leq 2^{2q+p}(p+1)^p\big(R^{-\alpha_p p}\|f\|_{L^p(X,\mu)}^p+C_p(R)^p\mathbf{Var}_{p,\mathcal{E}} (f)^p\Big) 
\\
\leq  2^{2q+p}(p+1)^pp\big(R^{-\alpha_p}\|f\|_{L^p(X,\mu)}+C_p(R)\mathbf{Var}_{p,\mathcal{E}} (f)\Big)^p.
\end{multline*}
Finally, applying the heat kernel bound $p_{t}(x,y)\leq C_h t^{-\beta}$ to the norm~\eqref{E:Ledoux_norm2}, we get for every $f \ge 0$,
\[
 \| f \|_{L^q(X,\mu)} \le 2^{2+\theta}(p+1)^{\theta}p^{1/q}C_h^{1-\theta}\big(R^{-\alpha_p}\|f\|_{L^p(X,\mu)}+C_p(R)\mathbf{Var}_{p,\mathcal{E}} (f)\Big)^{\theta}\|f\|_{L^1(X,\mu)}^{1-\theta}
\]
for $\beta= \frac{\alpha_p p}{q-p}$, equivalently $\frac{1}{q}=\frac{1}{p}-\frac{\alpha_p }{q\beta}$, as we wanted to prove.
\end{proof}

\subsubsection{Gagliardo-Nirenberg}
In the same lines as~\cite[Section 3.2.7]{SC02}, Theorem~\ref{T:Gagliardo-Nirenberg_loc} extends to the full scale of Gagliardo-Nirenberg inequalities by noticing that for any $t,s>0$ the mapping $f\mapsto(f-t)_+\wedge s:= f_t^s$ is a contraction and hence
\begin{equation}\label{E:local_equiv_a}
R^{-\alpha_p}\|f_t^s\|_{L^p(X,\mu)}+C_p(R) \mathbf{Var}_{p,\mathcal{E}} (f_t^s)\leq C\big(R^{-\alpha_p}\|f\|_{L^p(X,\mu)}+C_p(R) \mathbf{Var}_{p,\mathcal{E}} (f)\big)
\end{equation}
for some constant $C>0$. 
As in the global case, we discuss in the following all these inequalities according to the value of $p \alpha_p$.

\begin{corollary}\label{C:Gagliardo-Nirenberg:corollary1_loc}
Assume that $(\mathrm {PPI}_p(R))$ is satisfied for some $p \ge 1 $ such that $p \alpha_p <\beta$. Then, there exists a constant $C_{p,r,s}>0$ such that for every $f \in W^{1,p}(\mathcal E)$ (or $BV(\mathcal{E})$ for $p=1$),
\begin{equation}\label{E:Gagliardo-Nirenberg:corollary1_loc}
  \| f \|_{L^r(X,\mu)} \le C_{p,r,s}\Big(R^{-\alpha_p}\|f\|_{L^p(X,\mu)}+C_p(R) \mathbf{Var}_{p,\mathcal{E}} (f)\Big)^{\theta}\|f\|_{L^s(X,\mu)}^{1-\theta},
\end{equation}
where $r,s \in [1,+\infty]$ and $\theta \in (0,1]$ are related by the identity
\[
\frac{1}{r}=\theta \Big(\frac{1}{p}-\frac{\alpha_p}{\beta} \Big)+\frac{1-\theta}{s}.
\]
\end{corollary}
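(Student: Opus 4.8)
The plan is to repeat the argument behind the global Corollary~\ref{C:Gagliardo-Nirenberg:corollary1}, with the seminorm $\mathbf{Var}_{p,\mathcal{E}}$ replaced throughout by the surrogate functional
\[
\mathcal{N}(f):=R^{-\alpha_p}\|f\|_{L^p(X,\mu)}+C_p(R)\,\mathbf{Var}_{p,\mathcal{E}}(f),
\]
which is nonnegative, homogeneous of degree one and subadditive, being the sum of a norm and a seminorm. Writing $q:=p\,\frac{\beta+\alpha_p}{\beta}$, so that $\frac1q=\frac1p-\frac{\alpha_p}{q\beta}$ and $q>p$, Theorem~\ref{T:Gagliardo-Nirenberg_loc} can be recast as the ``seed'' Gagliardo--Nirenberg inequality
\[
\|f\|_{L^q(X,\mu)}\le C\,\mathcal{N}(f)^{\frac{\beta}{\beta+\alpha_p}}\,\|f\|_{L^1(X,\mu)}^{\frac{\alpha_p}{\beta+\alpha_p}},
\]
which is exactly~\eqref{E:Gagliardo-Nirenberg:corollary1_loc} at the distinguished corner $r=q$, $s=1$, $\theta=\frac{\beta}{\beta+\alpha_p}$, now with $\mathcal{N}$ in the role of the gradient norm. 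I would then feed this, exactly as in the global case, into the abstract iteration of~\cite[Theorem~3.1]{BCLS95} (the subcritical regime, selected by $p\alpha_p<\beta$), which upgrades one inequality of this shape to the whole admissible family of exponents $(r,s,\theta)$ linked by $\frac1r=\theta\bigl(\frac1p-\frac{\alpha_p}{\beta}\bigr)+\frac{1-\theta}{s}$.

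The one hypothesis of~\cite[Theorem~3.1]{BCLS95} still to be checked is that $\mathcal{N}$ is stable under the truncations driving that iteration, namely $f_{\rho,k}=(f-\rho^k)_+\wedge\rho^k(\rho-1)$, $k\in\mathbb{Z}$. For a single truncation $f_t^s=(f-t)_+\wedge s$ this is precisely~\eqref{E:local_equiv_a}, i.e.\ $\mathcal{N}(f_t^s)\le C\,\mathcal{N}(f)$. Should the summable form (the condition $(H_p)$ of~\cite[Section~2]{BCLS95}) be needed, I would split $\mathcal{N}(g)^p\le 2^{p-1}\bigl(R^{-\alpha_p p}\|g\|_{L^p(X,\mu)}^p+C_p(R)^p\mathbf{Var}_{p,\mathcal{E}}(g)^p\bigr)$ and treat the two summands separately: for the $\mathbf{Var}$-part invoke Lemma~\ref{L:local_norm_approx:a}, and for the $L^p$-part establish the pointwise bound $\sum_{k\in\mathbb{Z}}f_{\rho,k}(x)^p\le c_\rho f(x)^p$ (on $\{\rho^j<f\le\rho^{j+1}\}$ the terms with $k\ge j+1$ vanish while those with $k\le j-1$ equal $\rho^k(\rho-1)$, so the series is geometric and comparable to $\rho^{jp}\le f(x)^p$) and integrate it over $X$; together these give $\sum_k\mathcal{N}(f_{\rho,k})^p\le C\,\mathcal{N}(f)^p$.

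With the seed inequality and this truncation stability in hand,~\cite[Theorem~3.1]{BCLS95} produces, for all $r,s\in[1,+\infty]$ and $\theta\in(0,1]$ with $\frac1r=\theta\bigl(\frac1p-\frac{\alpha_p}{\beta}\bigr)+\frac{1-\theta}{s}$, a constant $C_{p,r,s}>0$ such that $\|f\|_{L^r(X,\mu)}\le C_{p,r,s}\,\mathcal{N}(f)^\theta\|f\|_{L^s(X,\mu)}^{1-\theta}$, which is~\eqref{E:Gagliardo-Nirenberg:corollary1_loc}. The case $p=1$, with $BV(\mathcal E)$ replacing $W^{1,p}(\mathcal E)$, runs identically, since both Lemma~\ref{L:local_norm_approx:a} and~\eqref{E:local_equiv_a} already cover it. I do not foresee a genuine obstacle: the substantive work is already done in Theorem~\ref{T:Gagliardo-Nirenberg_loc} and Lemma~\ref{L:local_norm_approx:a}, and the only point deserving care is that the iteration of~\cite{BCLS95}, though phrased there for the Dirichlet-form energy, goes through verbatim with the surrogate $\mathcal{N}$ because its proof uses nothing about that energy beyond homogeneity, subadditivity and truncation stability---properties we have just verified for $\mathcal{N}$.
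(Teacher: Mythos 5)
Your proposal is correct and follows essentially the same route as the paper: the paper likewise treats $R^{-\alpha_p}\|f\|_{L^p(X,\mu)}+C_p(R)\mathbf{Var}_{p,\mathcal{E}}(f)$ as the gradient-surrogate, feeds the seed inequality of Theorem~\ref{T:Gagliardo-Nirenberg_loc} into \cite[Theorem 3.1]{BCLS95}, and observes that the truncation stability~\eqref{E:local_equiv_a} is exactly the required property $(H_\infty^+)$. Your additional verification of the summable condition $(H_p)$ is not needed for this subcritical corollary (it is what the paper's~\eqref{E:local_equiv_b} provides for the Trudinger--Moser case), but it is harmless and correctly argued.
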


\begin{proof}
The proof is the same as in Corollary~\ref{C:Gagliardo-Nirenberg:corollary1} since~\eqref{E:local_equiv_a} corresponds to the property $(H_\infty^+)$ from~\cite[Theorem 3.1]{BCLS95}.
\end{proof}

\begin{remark}
As with the global counterparts, we point out explicitly some particular cases.
\begin{enumerate}[leftmargin=1.5em,label={\rm (\roman*)}]
\item If $r=s$, then $r=\frac{p\beta}{\beta-p \alpha_p}$ and~\eqref{E:Gagliardo-Nirenberg:corollary1_loc} yields the global Sobolev inequality
\[
\| f \|_{L^r(X,\mu)} \le C_{p} \big(R^{-\alpha_p}\|f\|_{L^p(X,\mu)}+C_p(R) \mathbf{Var}_{p,\mathcal{E}} (f)\Big)
\]
\item If $r=p>1$ and $s=1$, then~\eqref{E:Gagliardo-Nirenberg:corollary1_loc} yields the global Nash inequality
\[
\| f \|_{L^p(X,\mu)} \le C_{p} \big(R^{-\alpha_p}\|f\|_{L^p(X,\mu)}+C_p(R) \mathbf{Var}_{p,\mathcal{E}} (f)\Big)^{\theta} \| f \|^{1-\theta}_{L^1(X,\mu)}
\]
with $\theta=\frac{(p-1)\beta}{p(\alpha_p+\beta)-\beta}$.
\item If $s=+\infty$, then~\eqref{E:Gagliardo-Nirenberg:corollary1_loc} yields 
\[
\| f \|_{L^r(X,\mu)} \le C_{p,r} \big(R^{-\alpha_p}\|f\|_{L^p(X,\mu)}+C_p(R) \mathbf{Var}_{p,\mathcal{E}} (f)\Big)^{\theta} \| f \|^{1-\theta}_{L^\infty(X,\mu)}
\]
with $\theta=\frac{p\beta}{r(\beta-p \alpha_p)}$.
\end{enumerate}
\end{remark}

We now turn to the case $p \alpha_p >\beta$.

\begin{corollary}\label{C:Gagliardo-Nirenberg:corollary2_loc}
Assume  that $(\mathrm {PPI}_p(R))$ is satisfied for some $p \ge 1 $ such that $p \alpha_p  > \beta$. Then, there exists a constant $C_p>0$ such that for every $f \in W^{1,p}(\mathcal E)$ (or $BV(\mathcal{E})$ for $p=1$), and $s \ge 1$,
\begin{equation*}
\| f \|_{L^\infty(X,\mu)} \le C_{p} \big(R^{-\alpha_p}\|f\|_{L^p(X,\mu)}+C_p(R) \mathbf{Var}_{p,\mathcal{E}} (f)\Big)^{\theta} \| f \|^{1-\theta}_{L^s(X,\mu)},
\end{equation*}
where $\theta \in (0,1)$ is given by $\theta=\frac{p\beta}{p\beta+s(p\alpha_p-\beta)}$.
\end{corollary}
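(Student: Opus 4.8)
The plan is to follow the same route as the proof of Corollary~\ref{C:Gagliardo-Nirenberg:corollary2}, but starting from the localized inequality of Theorem~\ref{T:Gagliardo-Nirenberg_loc} in place of the global one of Theorem~\ref{T:Gagliardo-Nirenberg}, and feeding it into~\cite[Theorem 3.2]{BCLS95}. The role played there by the functional $f\mapsto\mathbf{Var}_{p,\mathcal E}(f)$ is now played by the modified functional
\[
N_R(f):=R^{-\alpha_p}\|f\|_{L^p(X,\mu)}+C_p(R)\,\mathbf{Var}_{p,\mathcal E}(f),
\]
and the whole argument goes through once one checks that $N_R$ satisfies the structural axiom required by~\cite{BCLS95}.

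First I would observe that $N_R$ enjoys the property $(H^+_\infty)$ of~\cite{BCLS95}: by~\eqref{E:local_equiv_a}, for every $t,s>0$ the truncation $f\mapsto(f-t)_+\wedge s$ does not increase $N_R$ beyond a fixed multiplicative constant. This is exactly what makes the level-set summation argument behind~\cite[Theorem 3.2]{BCLS95} applicable with $N_R$ in place of the $p$-variation; the coefficients $R^{-\alpha_p}$ and $C_p(R)$ being fixed, they are simply absorbed into the constants produced by the iteration.

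Next I would invoke Theorem~\ref{T:Gagliardo-Nirenberg_loc}, which supplies precisely the hypothesis of~\cite[Theorem 3.2]{BCLS95} in the regime $p\alpha_p>\beta$: namely the single Gagliardo--Nirenberg estimate
\[
\|f\|_{L^q(X,\mu)}\le C\,N_R(f)^{\frac{\beta}{\beta+\alpha_p}}\|f\|_{L^1(X,\mu)}^{\frac{\alpha_p}{\beta+\alpha_p}},\qquad \tfrac1q=\tfrac1p-\tfrac{\alpha_p}{q\beta},
\]
with $q=p(1+\alpha_p/\beta)$ finite. Applying~\cite[Theorem 3.2]{BCLS95} to this estimate together with $(H^+_\infty)$ for $N_R$ yields, for every $s\ge1$, the bound $\|f\|_{L^\infty(X,\mu)}\le C_p\,N_R(f)^{\theta}\|f\|_{L^s(X,\mu)}^{1-\theta}$, and matching exponents exactly as in Corollary~\ref{C:Gagliardo-Nirenberg:corollary2} fixes $\theta=\frac{p\beta}{p\beta+s(p\alpha_p-\beta)}$. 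Substituting the definition of $N_R$ gives the claimed inequality; for $p=1$ one works with $BV(\mathcal E)$ throughout, as in the other corollaries.

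The only point requiring care is that $(H^+_\infty)$, as formulated in~\cite{BCLS95}, must be matched to the localized functional $N_R$ rather than to $\mathbf{Var}_{p,\mathcal E}$ alone --- which is guaranteed by~\eqref{E:local_equiv_a} --- and that the exponent $q$ produced by Theorem~\ref{T:Gagliardo-Nirenberg_loc} is an admissible input for~\cite[Theorem 3.2]{BCLS95}. Everything else is the same exponent bookkeeping as in the global case and involves no new difficulty.
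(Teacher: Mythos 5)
Your proposal is correct and follows essentially the same route as the paper: the paper's proof likewise combines Theorem~\ref{T:Gagliardo-Nirenberg_loc} with~\cite[Theorem 3.2]{BCLS95}, using~\eqref{E:local_equiv_a} to verify the truncation property $(H_\infty^+)$ for the localized functional $R^{-\alpha_p}\|f\|_{L^p(X,\mu)}+C_p(R)\,\mathbf{Var}_{p,\mathcal{E}}(f)$, exactly as you describe. The exponent bookkeeping you carry out matches the paper's, so there is nothing to add.
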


\begin{proof}
Analogously as Corollary~\ref{C:Gagliardo-Nirenberg:corollary2}, this follows by applying~\cite[Theorem 3.2]{BCLS95} with~\eqref{E:local_equiv_a} and Theorem~\ref{T:Gagliardo-Nirenberg_loc}.
\end{proof}

\subsubsection{Trudinger-Moser}
Trudinger-Moser inequalities correspond to the case $p \alpha_p =\beta$. To treat them, we observe first that Minkowski's inequality together with Lemma~\ref{L:local_norm_approx:a} 
implies
\begin{multline}\label{E:local_equiv_b}
\bigg(\sum_{k\in\mathbb{Z}}\Big(R^{-\alpha_p}\|f_{\rho,k}\|_{L^p(X,\mu)}+C_p(R) \mathbf{Var}_{p,\mathcal{E}} (f_{\rho,k})\Big)^p\bigg)^{1/p}\\
\leq R^{-\alpha_p}\|f\|_{L^p(X,\mu)}+ 2(p+1)C_p(R) \mathbf{Var}_{p,\mathcal{E}} (f)
\end{multline}
for any $p\geq 1$, $\rho>1$ and $f_{\rho,k}:=(f-\rho^k)_+\wedge\rho^k(\rho-1)$.

\begin{corollary}\label{C:Trudinger-Moser_1_loc}
Assume that $(\mathrm {PPI}_1(R))$ is satisfied and that $\alpha_1=\beta$. Then, there exists a constant $C>0$ such that for every $f \in BV(\mathcal{E})$
\begin{align*}
\| f \|_{L^\infty(X,\mu)} \le C \big(R^{-\alpha_p}\|f\|_{L^p(X,\mu)}+C_p(R) \mathbf{Var}_{p,\mathcal{E}} (f)\Big).
\end{align*}
\end{corollary}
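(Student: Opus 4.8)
The plan is to reprise the proof of the global statement, Corollary~\ref{C:Trudinger-Moser}, with the genuine seminorm $\mathbf{Var}_{\mathcal{E}}(\cdot)$ replaced throughout by the $R$-localized functional
\[
N_R(f):=R^{-\alpha_1}\|f\|_{L^1(X,\mu)}+C_1(R)\,\mathbf{Var}_{\mathcal{E}}(f)
\]
(recall that $p=1$ in this corollary, so $\alpha_p=\alpha_1$, $C_p(R)=C_1(R)$ and $\mathbf{Var}_{p,\mathcal{E}}=\mathbf{Var}_{\mathcal{E}}$), and to feed the resulting data into the abstract iteration scheme of~\cite{BCLS95}. First I would specialize Theorem~\ref{T:Gagliardo-Nirenberg_loc} to $p=1$: the hypothesis $\alpha_1=\beta$ forces the exponent $q=1+\alpha_1/\beta=2$, so that theorem---together with the standing local heat-kernel bound~\eqref{E:G-N-loc_UHKE} (here $\beta=\alpha_1$)---yields the localized Nash--Gagliardo--Nirenberg inequality
\[
\|f\|_{L^2(X,\mu)}\le C\,N_R(f)^{1/2}\,\|f\|_{L^1(X,\mu)}^{1/2}
\]
for every $f\in L^1(X,\mu)$. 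This is precisely the starting estimate required by the borderline (``dimension one'') case of the Bakry--Coulhon--Ledoux--Saloff-Coste machinery.

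The second ingredient is the structural condition $(H_1)$ of~\cite[Section~2]{BCLS95} for the functional $N_R$, that is, the $\ell^1$-summability of $N_R$ over the $\rho$-adic truncations $f_{\rho,k}:=(f-\rho^k)_+\wedge\rho^k(\rho-1)$, $k\in\mathbb{Z}$. This is exactly inequality~\eqref{E:local_equiv_b} with $p=1$, itself obtained from Minkowski's inequality and Lemma~\ref{L:local_norm_approx:a}, which gives $\sum_{k\in\mathbb{Z}}N_R(f_{\rho,k})\le R^{-\alpha_1}\|f\|_{L^1(X,\mu)}+4\,C_1(R)\,\mathbf{Var}_{\mathcal{E}}(f)\le 4\,N_R(f)$. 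Combined with the contraction property of the truncations $f\mapsto f_{\rho,k}$ (by which the Gagliardo--Nirenberg inequality above transfers to each $f_{\rho,k}$, cf.~\eqref{E:local_equiv_a}), this places us exactly within the hypotheses of~\cite[Theorem~3.2]{BCLS95}. Invoking that theorem---whose proof is a De Giorgi--Moser type iteration over the levels $\rho^k$---produces $\|f\|_{L^\infty(X,\mu)}\le C\,N_R(f)$, which is the claimed inequality. The reduction to $f\ge0$ is harmless since $\mathbf{Var}_{\mathcal{E}}(|f|)\le\mathbf{Var}_{\mathcal{E}}(f)$ and $\||f|\|_{L^1(X,\mu)}=\|f\|_{L^1(X,\mu)}$.

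I do not expect a genuine obstacle; the only points requiring care are bookkeeping. One must note that $N_R$ is not subadditive but only subadditive on truncations up to a fixed multiplicative constant, which is precisely why~\eqref{E:local_equiv_b} (the localized incarnation of $(H_1)$) was isolated in advance. One must also confirm that this is genuinely the $L^\infty$-endpoint rather than a regime covered by Corollary~\ref{C:Gagliardo-Nirenberg:corollary1_loc}: since $p\alpha_p=\alpha_1=\beta$, the ``dimension'' $\beta/\alpha_1$ equals $1$, Theorem~\ref{T:Gagliardo-Nirenberg_loc} lands at the finite critical exponent $q=2$, and it is the iteration of~\cite[Theorem~3.2]{BCLS95}---not a single application of an embedding---that yields the boundedness.
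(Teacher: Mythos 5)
Your argument is correct and follows essentially the same route as the paper's own proof: the localized condition $(H_1)$ is supplied by~\eqref{E:local_equiv_b} (Minkowski plus Lemma~\ref{L:local_norm_approx:a}), the starting inequality is Theorem~\ref{T:Gagliardo-Nirenberg_loc} specialized to $p=1$ with $\alpha_1=\beta$ (hence $q=2$), and the conclusion comes from~\cite[Theorem 3.2]{BCLS95} applied to the localized functional $R^{-\alpha_1}\|\cdot\|_{L^1(X,\mu)}+C_1(R)\,\mathbf{Var}_{\mathcal{E}}(\cdot)$. The extra bookkeeping you provide (the explicit exponent $q=2$ and the constant in the truncation estimate) simply makes explicit what the paper leaves implicit.
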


\begin{proof}
By virtue of~\eqref{E:local_equiv_b}, the condition $(H_1)$ from~\cite[Section 2]{BCLS95} is satisfied, hence Theorem~\ref{T:Gagliardo-Nirenberg_loc} and~\cite[Theorem 3.2]{BCLS95} yield the result. 
\end{proof}

We finish this section with the Trudinger-Moser inequalities that one obtains for $p>1$.

\begin{corollary}\label{C:Trudinger-Moser_p_loc}
Assume further that $(\mathrm {PPI}(R)_p)$ is satisfied and that $p\alpha_p=\beta$ with $p>1$. Then, there exist constants $c,C>0$ such that 

\begin{align*}
\int_X \left( e^{ c |f|^{\frac{p}{p-1}} }-1 \right) d\mu  \le C \| f \|_{L^1(X,\mu)}
\end{align*}
for every $f \in W^{1,p}(\mathcal{E})$ with 
$\|f\|_{L^p(X,\mu)}=R^{\alpha_p}\big(1-C_p(R)\mathbf{Var}_{p,\mathcal{E}} (f)\big)$. 
\end{corollary}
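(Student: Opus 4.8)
The plan is to mimic exactly the proof of Corollary~\ref{C:Trudinger-Moser_1}, replacing the global ingredients by their localized counterparts. The key structural fact is that the whole machinery of~\cite{BCLS95} applies once we have established two things: first, a Gagliardo--Nirenberg-type inequality at the critical exponent, which here is precisely the conclusion of Theorem~\ref{T:Gagliardo-Nirenberg_loc} with the choice $p\alpha_p=\beta$; and second, the cut-off stability condition $(H_p)$ of~\cite[Section~2]{BCLS95}. For the latter, I would invoke~\eqref{E:local_equiv_b}, which is exactly the statement that the functional
\[
f\longmapsto R^{-\alpha_p}\|f\|_{L^p(X,\mu)}+C_p(R)\,\mathbf{Var}_{p,\mathcal{E}}(f)
\]
satisfies the sub-additivity-under-truncation property $(H_p)$ with constant $2(p+1)$, exactly as Lemma~\ref{L:local_norm_approx:a} does for $\mathbf{Var}_{p,\mathcal{E}}$ alone. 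This is the analogue of the role played by Lemma~\ref{L:local_norm_approx:a} in the global Corollary~\ref{C:Trudinger-Moser_1}.

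First I would observe that when $p\alpha_p=\beta$, Theorem~\ref{T:Gagliardo-Nirenberg_loc} produces the family of inequalities
\[
\|f\|_{L^q(X,\mu)}\le C_q\Big(R^{-\alpha_p}\|f\|_{L^p(X,\mu)}+C_p(R)\,\mathbf{Var}_{p,\mathcal{E}}(f)\Big)^{\theta}\|f\|_{L^1(X,\mu)}^{1-\theta}
\]
for all admissible exponent pairs, with $\theta=p/q$; tracking the dependence of $C_q$ on $q$ as $q\to\infty$ is what feeds the exponential integrability. Then I would apply~\cite[Theorem~3.4]{BCLS95} (the same theorem used in the global case) to the functional displayed above, now legitimately since $(H_p)$ holds for it by~\eqref{E:local_equiv_b}. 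That theorem converts the scale of $L^q$ bounds into the Moser--Trudinger estimate
\[
\int_X\left(e^{c|f|^{p/(p-1)}}-1\right)d\mu\le C\|f\|_{L^1(X,\mu)}
\]
under the normalization that the relevant functional equals~$1$, i.e. $R^{-\alpha_p}\|f\|_{L^p(X,\mu)}+C_p(R)\,\mathbf{Var}_{p,\mathcal{E}}(f)=1$, which rearranges to the stated condition $\|f\|_{L^p(X,\mu)}=R^{\alpha_p}\big(1-C_p(R)\mathbf{Var}_{p,\mathcal{E}}(f)\big)$. Finally, as in the proof of Theorem~\ref{T:Gagliardo-Nirenberg_loc}, it suffices to argue for $f\ge0$ and then pass to $|f|$, using $\mathbf{Var}_{p,\mathcal{E}}(|f|)\le\mathbf{Var}_{p,\mathcal{E}}(f)$.

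The main obstacle, such as it is, is bookkeeping rather than conceptual: one must check that the hypothesis package of~\cite[Theorem~3.4]{BCLS95} is met by the \emph{localized} functional, in particular that the dependence on $R$ enters only through the two constants $R^{-\alpha_p}$ and $C_p(R)$ in a way compatible with the homogeneity arguments of~\cite{BCLS95}, and that~\eqref{E:local_equiv_b} is indeed the correct form of $(H_p)$ in their notation (with the constant $2(p+1)$ playing the role of their truncation constant). Since~\eqref{E:local_equiv_b} was derived precisely by combining Minkowski's inequality with Lemma~\ref{L:local_norm_approx:a}, and since Theorem~\ref{T:Gagliardo-Nirenberg_loc} already packages the ultracontractivity bound~\eqref{E:G-N-loc_UHKE} together with $(\mathrm{PPI}_p(R))$, there is nothing further to prove beyond citing these two inputs and~\cite[Theorem~3.4]{BCLS95}; the proof is therefore essentially one paragraph long, parallel to that of Corollary~\ref{C:Trudinger-Moser_1}.

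\begin{proof}
By virtue of~\eqref{E:local_equiv_b}, the functional $f\mapsto R^{-\alpha_p}\|f\|_{L^p(X,\mu)}+C_p(R)\mathbf{Var}_{p,\mathcal{E}}(f)$ satisfies the condition $(H_p)$ from~\cite[Section~2]{BCLS95} for $p>1$. Since $p\alpha_p=\beta$, Theorem~\ref{T:Gagliardo-Nirenberg_loc} provides the corresponding critical Gagliardo--Nirenberg inequality for this functional, and~\cite[Theorem~3.4]{BCLS95} then yields the stated Trudinger--Moser inequality under the normalization $R^{-\alpha_p}\|f\|_{L^p(X,\mu)}+C_p(R)\mathbf{Var}_{p,\mathcal{E}}(f)=1$, that is, $\|f\|_{L^p(X,\mu)}=R^{\alpha_p}\big(1-C_p(R)\mathbf{Var}_{p,\mathcal{E}}(f)\big)$. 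As usual it suffices to treat $f\ge0$ and then apply the result to $|f|$, using $\mathbf{Var}_{p,\mathcal{E}}(|f|)\le\mathbf{Var}_{p,\mathcal{E}}(f)$.
\end{proof}
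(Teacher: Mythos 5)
Your proposal is correct and follows essentially the same route as the paper: condition $(H_p)$ for the localized functional via~\eqref{E:local_equiv_b}, the critical case $p\alpha_p=\beta$ of Theorem~\ref{T:Gagliardo-Nirenberg_loc}, and then~\cite[Theorem~3.4]{BCLS95}, with the stated normalization being exactly $R^{-\alpha_p}\|f\|_{L^p(X,\mu)}+C_p(R)\mathbf{Var}_{p,\mathcal{E}}(f)=1$. Your extra remarks on tracking the $R$-dependence and the normalization are accurate elaborations of the same argument, not a different proof.
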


\begin{proof}
In this case,~\eqref{E:local_equiv_b} implies condition $(H_p)$ from~\cite[Section 2]{BCLS95} for $p>1$, and the result follows from Theorem~\ref{T:Gagliardo-Nirenberg_loc} and~\cite[Theorem 3.4]{BCLS95}.
\end{proof}

\subsection{Examples}

The Gagliardo-Nirenberg and  Trudinger-Moser inequalities proved in this section can be applied in large classes of examples. In particular, we mention the following:
\begin{itemize}
\item \underline{Metric measure spaces with Gaussian heat kernel estimates}: Theorem~\ref{T:BV2} provides the class of strictly local spaces to which one can apply the results obtained in this paper, and in particular Gagliardo-Nirenberg and Trudinger-Moser inequalities. Note that a sufficient condition for condition~\eqref{E:subGauss-upper} to hold is the volume growth condition $\mu (B(x,r)) \ge C r^{d_H}$, in which case one has $\beta = \frac{d_H}{2}$. 
\item \underline{Metric measure spaces with sub-Gaussian heat kernel estimates}: Theorem \ref{T:BV3} yields another large set of examples, including unbounded nested fractals (or product of them). These satisfy $(\mathrm{PPI}_p)$ for $1\le p\le 2$ and condition~\eqref{E:subGauss-upper} with $\beta=\frac{d_H}{d_W}$. In the case of the unbounded Vicsek fractal, its $n$-fold product satisfies $(\mathrm{PPI}_p)$ for $1 \le p \le 2$, c.f. Theorem~\ref{T:Vicsek_prod} and condition~\eqref{E:subGauss-upper} with $\beta=\frac{d_H}{d_W}$. Compact nested fractals satisfy the corresponding localized versions.
\end{itemize}

\section{Morrey's type inequalities}\label{morrey}

The classical Morrey's inequality implies that functions in the Sobolev space $W^{1,p}(\mathbb{R}^d)$ are H\"older continuous (after a possible modification on a set of measure zero) for all $p>d$. This section is devoted to its counterpart in the context of Dirichlet spaces. Besides of being an important inequality on its own, we are interested in the associated critical value 
\[
\delta_\mathcal{E}:=\inf \{ p \ge 1, \, W^{1,p}(\mathcal{E}) \subset C^0(X) \},
\]
where $C^{0}(X)$ denotes the space of a.e bounded functions which admit a continuous representative, and the connection of $\delta_\mathcal{E}$ to other dimensions studied in the metric measure setting~\cite{Kig18}. 

\medskip

The inequality that we prove in this section provides a general embedding of $\mathbf{B}^{p,\alpha}(X)$ into  the  space $C^{\lambda}(X)$, $\lambda>0$, of bounded H\"older functions equipped with the norm
\begin{equation*}
\|f\|_{C^\lambda(X)}:=\|f\|_{L^\infty(X,\mu)}+\mu\text{-ess}\sup\limits_{x\neq y}\frac{|f(x)-f(y)|}{d(x,y)^\lambda}.
\end{equation*}


Those types of embedding, however with weaker regularity, were already observed by Coulhon in~\cite{Cou03} under volume doubling and (sub-)Gaussian heat kernel estimates. Here and throughout this section, we will work under the following additional assumptions:

\begin{itemize}[leftmargin=2em]
\item\underline{Condition 1}. The underlying space is $d_H$-Ahlfors regular;
\item\underline{Condition 2}. The heat semigroup admits a heat kernel with Gaussian or sub-Gaussian estimates. 
\end{itemize}

\subsection{Metric approach}

 The proof of the following result is based on a generalization of the ideas in~\cite[Theorem 8.1]{Gri03}. 
 Notice that Theorem~\ref{T:Morrey_local} holds for any pair of exponents $(p,\alpha)$; Morrey's inequality will correspond to the specific pairs $(p,\alpha_p)$.

\begin{theorem}\label{T:Morrey_local}
For any $p>\frac{d_H}{d_W\alpha}$ and $R>0$, there exists $C_p>0$ (independent from $R$) 
such that
\begin{equation}\label{E:Morrey_local}
\mu\text{-ess}\sup\limits_{0<d(x,y)<R/3}\frac{|f(x)-f(y)|}{d(x,y)^\lambda} \leq C\|f\|_{p,\alpha,R}
\end{equation}
for any $f\in\mathbf{B}^{p,\alpha}(X)$, where $\lambda=d_W\alpha-\frac{d_H}{p}$. In particular, if $\alpha p > \frac{d_H}{d_W}$, then $\mathbf{B}^{p,\alpha}(X) \subset C^\lambda (X)$, where $\lambda=d_W\alpha-\frac{d_H}{p}$.
\end{theorem}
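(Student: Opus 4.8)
The plan is to run a Campanato/Morrey-type argument adapted to the heat semigroup scale, following the strategy of~\cite[Theorem 8.1]{Gri03}. Fix $f\in\mathbf{B}^{p,\alpha}(X)$ and, for $x\in X$ and $r>0$, write $f_{B(x,r)}:=\frac{1}{\mu(B(x,r))}\int_{B(x,r)}f\,d\mu$ for the mean of $f$ over the ball. The first step is to bound the oscillation of these averages across dyadic scales: for $r<R/3$ and $j\geq 0$, one estimates
\[
|f_{B(x,2^{-j}r)}-f_{B(x,2^{-j-1}r)}|\le \frac{C}{\mu(B(x,2^{-j-1}r))}\int_{B(x,2^{-j}r)}|f(y)-f_{B(x,2^{-j}r)}|\,d\mu(y),
\]
and the right-hand side is controlled using the local Besov seminorm. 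Concretely, since the heat kernel satisfies (sub-)Gaussian lower bounds, with $t\simeq (2^{-j}r)^{d_W}$ one has $p_t(x,y)\ge c/\mu(B(x,t^{1/d_W}))$ on $B(x,t^{1/d_W})$, so by Ahlfors regularity
\[
\int_{B(x,2^{-j}r)}|f(y)-f(z)|^p\,d\mu(y)\,d\mu(z)\big/\mu(B(x,2^{-j}r))^2 \;\le\; C\,(2^{-j}r)^{d_W p\alpha}\,\mu(B(x,2^{-j}r))^{-1}\|f\|_{p,\alpha,R}^p
\]
up to passing through $\int\int p_t|f-f|^p$; applying Hölder in the $y$-integral and $d_H$-Ahlfors regularity $\mu(B(x,\rho))\simeq \rho^{d_H}$ converts this into $|f_{B(x,2^{-j}r)}-f_{B(x,2^{-j-1}r)}|\le C\,(2^{-j}r)^{d_W\alpha-d_H/p}\|f\|_{p,\alpha,R}= C\,(2^{-j}r)^{\lambda}\|f\|_{p,\alpha,R}$.

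The second step is to sum the geometric series. Because $p>\frac{d_H}{d_W\alpha}$ we have $\lambda=d_W\alpha-\frac{d_H}{p}>0$, so $\sum_{j\ge 0}(2^{-j}r)^\lambda\le C r^\lambda$ converges; this shows that the averages $f_{B(x,2^{-j}r)}$ form a Cauchy sequence and, by Lebesgue differentiation (valid on a doubling space), converge to $f(x)$ for $\mu$-a.e.\ $x$. Telescoping yields the Campanato-type bound
\[
|f(x)-f_{B(x,r)}|\le C\,r^\lambda\|f\|_{p,\alpha,R}\qquad\text{for }\mu\text{-a.e. }x,\ 0<r<R/3 .
\]
The third step is the standard comparison of two nearby points: given $x,y$ with $0<d(x,y)=:r<R/3$, both $B(x,2r)$ and $B(y,2r)$ contain the midpoint scale and by doubling $\mu(B(x,2r))\simeq\mu(B(y,2r))\simeq\mu(B(x,r))$, so
\[
|f_{B(x,2r)}-f_{B(y,2r)}|\le \frac{C}{\mu(B(x,2r))}\int_{B(x,2r)}|f(z)-f_{B(x,2r)}|\,d\mu(z)+(\text{sym.})\le C\,r^\lambda\|f\|_{p,\alpha,R},
\]
where the middle integral is handled exactly as in Step 1. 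Combining with the Campanato bound at $x$ and at $y$ gives $|f(x)-f(y)|\le C\,d(x,y)^\lambda\|f\|_{p,\alpha,R}$ for $\mu$-a.e.\ $x,y$, which is~\eqref{E:Morrey_local}. The final assertion $\mathbf{B}^{p,\alpha}(X)\subset C^\lambda(X)$ then follows: the a.e.\ estimate forces $f$ to agree a.e.\ with a function that is genuinely $\lambda$-Hölder on a dense set and hence extends to a Hölder representative on all of $X$, and the $L^\infty$ bound comes from the Campanato estimate combined with $\|f\|_{L^1(B(x,R/3))}\lesssim \|f\|_{L^p}$ via Hölder and Ahlfors regularity (absorbing the low-frequency part as in the equivalence $\|f\|_{p,\alpha}\simeq\|f\|_{L^p}+\|f\|_{p,\alpha,R}$ recalled after Definition~2.3).

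\textbf{Main obstacle.} The delicate point is Step 1: passing rigorously from the double integral $\int_X\int_X p_t(x,y)|f(x)-f(y)|^p\,d\mu(x)\,d\mu(y)$, which is what the Besov seminorm $\|f\|_{p,\alpha,R}$ controls, to the \emph{local} ball quantity $\mu(B(x,\rho))^{-2}\int_{B(x,\rho)}\int_{B(x,\rho)}|f(y)-f(z)|^p$, and keeping all constants uniform in $R$ and in the scale $\rho=2^{-j}r$. This requires carefully exploiting the (sub-)Gaussian lower bound to localize $p_t$ to a ball of radius $\sim t^{1/d_W}$ (losing only an exponential constant), then Ahlfors regularity to identify $\mu(B(x,t^{1/d_W}))$ with $t^{d_H/d_W}$, and finally Hölder's inequality to downgrade the $L^p$ average of $|f(y)-f(z)|$ to the $L^1$ mean oscillation; the exponent bookkeeping $d_W\alpha - d_H/p = \lambda$ drops out precisely here, and the condition $p>d_H/(d_W\alpha)$ is exactly what makes $\lambda>0$ so that the dyadic sum in Step 2 converges. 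A secondary technical care point is that all estimates are a.e.\ statements, so the Lebesgue differentiation theorem and the passage to a continuous representative must be invoked for the final topological conclusion rather than pointwise everywhere.
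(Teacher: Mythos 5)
Your proposal follows essentially the same route as the paper's proof: ball averages controlled via H\"older's inequality, Ahlfors regularity and the Korevaar--Schoen/heat-kernel lower bound localization of the Besov seminorm $\|f\|_{p,\alpha,R}$, dyadic telescoping at Lebesgue points (using $\lambda>0$), a triangle-inequality comparison of two nearby points at scale $r=d(x,y)$, and a fixed-radius H\"older bound for the $L^\infty$ part of the $C^\lambda$ embedding. The argument and exponent bookkeeping match the paper's, so no substantive differences to report.
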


\begin{remark}
We note that when applied to the critical exponent $\alpha=\alpha_p$ the condition $\alpha_p p = \frac{d_H}{d_W}  $ exactly corresponds to the critical exponent for Trudinger-Moser inequalities in the previous section.
\end{remark}

\begin{proof}
Let first $0<r<R/3$ and consider $x,y\in X$ with $d(x,y)\leq r$. Define
\begin{equation*}
f_r(x):=\frac{1}{\mu\big(B(x,r)\big)}\int_{B(x,r)}u(z)\,d\mu(z)
\end{equation*}
and notice that
\[
f_r(x)=\frac{1}{\mu\big(B(x,r)\big)\mu\big(B(y,r)\big)} \int_{B(x,r)}\int_{B(y,r)}u(z)\,d\mu(z')\,d\mu(z).
\]
Analogously one defines $f_r(y)$. H\"older's inequality yields
\begin{align*}
|f_r(x)-f_r(y)|&=\frac{1}{\mu\big(B(x,r)\big)\mu\big(B(y,r)\big)}\Big|\int_{B(x,r)}\int_{B(y,r)}(u(z)-u(z'))\,d\mu(z')\,d\mu(z)\Big|\\
&\leq\bigg(\frac{1}{\mu\big(B(x,r)\big)\mu\big(B(y,r)\big)}\int_{B(x,r)}\int_{B(y,r)}|u(z)-u(z')|^p\,d\mu(z')\,d\mu(z)\Big|\bigg)^{1/p}
\end{align*}
hence, applying the $d_H$-Ahlfors regularity of the space and since $d(x,y)\leq r$,
 we get
\begin{align*}
|f_r(x)-f_r(y)|^p&\leq \frac{C}{r^{2d_H}}\int_X\int_{B(z,3r)}|u(z)-u(z')|^p\,d\mu(z')\,d\mu(z)\\
&\leq Cr^{p\alpha d_W-d_H}\sup_{r\in(0,R/3)}\frac{1}{r^{d_H+p\alpha d_W}}\int_X\int_{B(z,3r)}|u(z)-u(z')|^p\,d\mu(z')\,d\mu(z)\\
&\leq Cr^{p\alpha d_W-d_H}\|f\|_{p,\alpha,R}^p,
\end{align*}
where the last inequality follows from the characterization of $\mathbf{B}^{p,\alpha}(X)$ as a Korevaar-Schoen class space, see e.g.~\eqref{E:BV2_02} for the Gaussian and~\cite[Theorem 2.4]{ABCRST3} for the sub-Gaussian case. Thus,
\begin{equation*}
|f_r(x)-f_r(y)| \leq C^{1/p}r^{\alpha d_W-\frac{d_H}{p}}\|f\|_{p,\alpha,R}
\end{equation*}
and analogously one obtains
\begin{equation}\label{E:Morrey_local_02}
|f_{2r}(x)-f_r(x)| \leq \tilde{C}^{1/p}r^{\alpha d_W-\frac{d_H}{p}}\|f\|_{p,\alpha,R}.
\end{equation}
Let now $x\in X$ be a Lebesgue point of $f$. Setting $r_k=2^{-k}r$, $k=0,1,2\ldots$, the latter inequality yields
\begin{equation}\label{E:Morrey_local_03}
|f(x)-f_r(x)|\leq\sum_{k=0}^\infty|f_{r_k}(x)-f_{r_{k+1}}(x)|\leq 
\tilde{C}^{1/p}r^{\alpha d_W-\frac{d_H}{p}}\|f\|_{p,\alpha,R}.
\end{equation}
Let $y\in X$ be another Lebesgue point of $f$ such that $d(x,y)\leq R/3$. Applying the triangle inequality as well as~\eqref{E:Morrey_local_02} and~\eqref{E:Morrey_local_03} with $r=d(x,y)$ we obtain
\begin{multline}\label{E:Morrey_local_04}
|f(x)-f(y)|\leq |f(x)-f_r(x)|+|f_r(x)-f_r(y)|+|f_r(y)-f(y)|\\
\leq C_{p}d(x,y)^{\alpha d_W-\frac{d_H}{p}}\|f\|_{p,\alpha,R}.
\end{multline}
Then, by virtue of~\cite[Theorem 3.4.3]{HKST15}, the volume doubling property of the space implies the validity of the Lebesgue differentiation theorem from~\cite[Section 3.4]{HKST15}, which guarantees that the set of Lebesgue points of $f$ is dense  in $X$. Thus,~\eqref{E:Morrey_local_04} implies~\eqref{E:Morrey_local}.
Finally, for any fixed $r>0$ (e.g. $r=R/4$), H\"older's inequality yields $|f_r(x)|\leq r^{-\frac{d_H}{p}}\|f\|_{L^p(X,\mu)}$, which together with~\eqref{E:Morrey_local_03} implies
\[
|f(x)|\leq C_r(\|f\|_{L^p(X,\mu)}+\|f\|_{p,\alpha,R})
\]
$\mu$-a.e. $x\in X$. Thus, $L^\infty(X,\mu)\subseteq\mathbf{B}^{p,\alpha}(X)$.
\end{proof}

Since the constant $C_p$ in the previous theorem is independent of $R$, by letting $R \to +\infty$ one deduces the corresponding global inequality.
\begin{corollary}\label{T:Morrey_global}
For any $p>\frac{d_H}{d_W\alpha}$, there exists $C_p>0$ such that
\begin{equation*}
\mu\text{-ess}\sup\limits_{d(x,y)>0}\frac{|f(x)-f(y)|}{d(x,y)^\lambda} \leq C_p\|f\|_{p,\alpha}
\end{equation*}
for any $f\in\mathbf{B}^{p,\alpha}(X)$, where $\lambda=d_W\alpha-\frac{d_H}{p}$.
\end{corollary}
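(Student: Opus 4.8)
The plan is to deduce Corollary~\ref{T:Morrey_global} directly from the localized Theorem~\ref{T:Morrey_local} by letting the localization parameter $R$ tend to infinity, exploiting the fact that the constant $C_p$ produced there does not depend on $R$. First I would fix $p>\frac{d_H}{d_W\alpha}$ and $f\in\mathbf{B}^{p,\alpha}(X)$, and recall from Section~\ref{sec2} that the localized seminorm is dominated by the global one, $\|f\|_{p,\alpha,R}\le\|f\|_{p,\alpha}$ for every $R>0$. Combining this with~\eqref{E:Morrey_local} and writing $\lambda=d_W\alpha-\frac{d_H}{p}$ (which is positive precisely because $p>\frac{d_H}{d_W\alpha}$) gives, for every $R>0$,
\[
\mu\text{-ess}\sup_{0<d(x,y)<R/3}\frac{|f(x)-f(y)|}{d(x,y)^\lambda}\le C_p\|f\|_{p,\alpha,R}\le C_p\|f\|_{p,\alpha},
\]
with one and the same $C_p$ throughout.

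Next I would pass to the limit $R\to+\infty$. To keep the argument with essential suprema clean, I would restrict to the countable family $R=3n$, $n\in\mathbb{N}$, and note that the sets $\{(x,y):0<d(x,y)<n\}$ increase to $\{(x,y):d(x,y)>0\}$ as $n\to\infty$. Since an essential supremum over a countable increasing union of sets equals the supremum over $n$ of the essential suprema over those sets, the previous display yields
\[
\mu\text{-ess}\sup_{d(x,y)>0}\frac{|f(x)-f(y)|}{d(x,y)^\lambda}=\sup_{n\in\mathbb{N}}\ \mu\text{-ess}\sup_{0<d(x,y)<n}\frac{|f(x)-f(y)|}{d(x,y)^\lambda}\le C_p\|f\|_{p,\alpha},
\]
which is exactly the asserted inequality.

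The only point demanding a little care — and the sole potential obstacle — is the interchange of the essential supremum with the (a priori uncountable) union over the scales $R$; this is precisely why I would work with the countable exhaustion $R=3n$ rather than a continuous limit, after which the rest is immediate from Theorem~\ref{T:Morrey_local}. I would also observe, exactly as in the local statement, that the elementary bound $|f_r(x)|\le r^{-d_H/p}\|f\|_{L^p(X,\mu)}$ together with~\eqref{E:Morrey_local_03} (applied at a fixed scale) shows the continuous representative of $f$ is globally bounded, so that in fact $\mathbf{B}^{p,\alpha}(X)\subset C^\lambda(X)$ globally when $\alpha p>\frac{d_H}{d_W}$.
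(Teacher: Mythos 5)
Your proposal is correct and follows essentially the same route as the paper, which simply lets $R\to+\infty$ in Theorem~\ref{T:Morrey_local} using the $R$-independence of the constant $C_p$ together with the bound $\|f\|_{p,\alpha,R}\le\|f\|_{p,\alpha}$. Your extra care with the countable exhaustion $R=3n$ to justify the exchange of the essential supremum with the limit is a harmless (and welcome) refinement of the same argument.
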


\subsection{Heat semigroup approach}

A drawback of Theorem~\ref{T:Morrey_local} is that when applied to the pair $(p,\alpha_p)$, it would be sharper and more natural to get on the right hand side of~\eqref{E:Morrey_local} the $p$-variation $\mathbf{Var}_{p,\mathcal{E}}(f)$ instead of the Besov semi-norm $\| \cdot \|_{p,\alpha_p,R}$. This certainly requires more assumptions than just sub-Gaussian heat kernel estimates and Ahlfors regularity. So, in addition to 
the latter,  
we will also assume
in this section 
the weak Bakry-\'Emery type estimate $(\mathrm G_\infty)$ from~\eqref{weakBE1}.

\begin{itemize}[leftmargin=2em]
\item\underline{Condition 3}. There exists a constant $C>0$ so that for any $f \in L^\infty(X,\mu)$, and $x,y \in X$
\begin{equation*}
| P_t f (x)-P_t f(y)| \le C \frac{d(x,y)^{d_W(1-\alpha_1)}}{t^{1-\alpha_1}} \| f \|_{L^\infty(X,\mu)}
\end{equation*}
for all $t >0$. 
\end{itemize}
Under these assumptions, we start by presenting the key estimate to obtain an \textit{almost} optimal Morrey's type inequality. Its proof relies on some ideas first developed by T. Coulhon~\cite{Cou03} and E.M. Ouhabaz~\cite{Ouh98}. In the sequel, $\Delta$ will denote the infinitesimal generator of the Dirichlet form $(\mathcal{E},\mathcal{F})$.

\begin{theorem}\label{Morrey-BesselBesov}
Let $p>1$ and $\frac{d_H}{pd_W} < \alpha < \frac{d_H}{pd_W}+\big(1-\frac{1}{p}\big)(1-\alpha_1)$. Then,
\[
| f(x)-f(y)| \le  Cd(x,y)^{\alpha d_W-\frac{d_H}{p}} \| (-\Delta)^\alpha f \|_{L^p(X,\mu)}
\]
for $ f\in {\rm dom}\, (-\Delta)^\alpha$, and $\mu$-a.e. $x,y \in X$.
\end{theorem}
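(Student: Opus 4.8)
The plan is to recover $f$ from $g:=(-\Delta)^\alpha f\in L^p(X,\mu)$ by subordination and then exploit two complementary pointwise bounds on the heat semigroup. Since the difference $f(x)-f(y)$ is insensitive to additive constants, the spectral calculus gives (with $f$ understood through its canonical representative)
\[
f(x)-f(y)=\frac{1}{\Gamma(\alpha)}\int_0^\infty t^{\alpha-1}\bigl(P_tg(x)-P_tg(y)\bigr)\,dt .
\]
Each $P_tg$ is continuous for $t>0$ (write $P_tg=P_{t/2}(P_{t/2}g)$ with $P_{t/2}g\in L^\infty$ and apply Condition~3), so one has to estimate $P_tg(x)-P_tg(y)$ for small and for large $t$, and split the $t$-integral at the natural scale $T:=d(x,y)^{d_W}$.

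For the small-$t$ regime I would first establish the ultracontractivity bound
\[
\|P_tg\|_{L^\infty(X,\mu)}\le C\,t^{-d_H/(pd_W)}\,\|g\|_{L^p(X,\mu)},
\]
which follows from Conditions~1 and~2: $d_H$-Ahlfors regularity turns the on-diagonal estimate into $p_t(x,y)\le C t^{-d_H/d_W}$, the (sub-)Gaussian off-diagonal term makes $\|p_t(x,\cdot)\|_{L^q}\le C t^{-d_H/(pd_W)}$ with $1/p+1/q=1$, and H\"older's inequality concludes. This yields $|P_tg(x)-P_tg(y)|\le 2\|P_tg\|_{L^\infty}\le C t^{-d_H/(pd_W)}\|g\|_{L^p}$; since $\alpha>\tfrac{d_H}{pd_W}$, the integral $\int_0^T t^{\alpha-1-d_H/(pd_W)}\,dt$ converges and is comparable to $T^{\alpha-d_H/(pd_W)}=d(x,y)^{\alpha d_W-d_H/p}$, exactly the target exponent. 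For $t>T$ I would instead feed $P_{t/2}g\in L^\infty$ into the weak Bakry-\'Emery estimate of Condition~3 to get
\[
|P_tg(x)-P_tg(y)|\le C\,\frac{d(x,y)^{d_W(1-\alpha_1)}}{t^{1-\alpha_1}}\,\|P_{t/2}g\|_{L^\infty}\le C\,d(x,y)^{d_W(1-\alpha_1)}\,t^{-(1-\alpha_1)-d_H/(pd_W)}\,\|g\|_{L^p},
\]
and then take the geometric mean of this with the trivial bound $|P_tg(x)-P_tg(y)|\le C t^{-d_H/(pd_W)}\|g\|_{L^p}$, using weight $\tfrac1p$ on the latter and $1-\tfrac1p$ on the former (legitimate since both bound the same quantity: $|u|\le A^{1/p}B^{1-1/p}$ whenever $|u|\le A$ and $|u|\le B$). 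This produces
\[
|P_tg(x)-P_tg(y)|\le C\,\Bigl(\tfrac{d(x,y)^{d_W}}{t}\Bigr)^{(1-\frac1p)(1-\alpha_1)}\,t^{-d_H/(pd_W)}\,\|g\|_{L^p}.
\]
Integrating over $t>T$ converges precisely because $\alpha<\tfrac{d_H}{pd_W}+(1-\tfrac1p)(1-\alpha_1)$ — the upper hypothesis — and, after combining the powers of $d(x,y)$ coming from the numerator with $T^{\alpha-\cdots}$, again returns $C\,d(x,y)^{\alpha d_W-d_H/p}\|g\|_{L^p}$.

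Adding the two contributions gives $|f(x)-f(y)|\le C\,d(x,y)^{\alpha d_W-d_H/p}\|(-\Delta)^\alpha f\|_{L^p}$ for a continuous representative of $f$; since $\mu$ is doubling, the Lebesgue differentiation theorem (as in~\cite[Section 3.4]{HKST15}) then upgrades this to the claimed $\mu$-a.e.\ inequality. I expect the main obstacles to be: (a) rigorously justifying the subordination identity and the Fubini/limit interchanges, in particular the convergence of $\int^{\infty}$ — in the non-compact case this is governed exactly by the upper restriction on $\alpha$ and relies on the cancellation in the difference $P_tg(x)-P_tg(y)$ (the individual $\|P_tg\|_{L^\infty}$ need not be integrable against $t^{\alpha-1}$ at infinity), while in the compact case $g$ has zero mean so $P_tg$ decays exponentially and the large-time tail is harmless (and absorbed into the target bound using boundedness of $d(x,y)$), which must be argued separately; and (b) the passage from the $L^\infty$-input Condition~3 to an $L^p$-input H\"older bound, handled above by the elementary geometric-mean device — it is this step that forces the factor $1-\tfrac1p$ and hence delimits the admissible range of $\alpha$.
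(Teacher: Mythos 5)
Your proposal is correct and follows essentially the same route as the paper: the subordination representation $f(x)-f(y)=c\int_0^\infty t^{\alpha-1}(P_tg(x)-P_tg(y))\,dt$ with $g=(-\Delta)^\alpha f$, an ultracontractive bound for small $t$ (Lemma~\ref{L:Pt_sup_bound}), an $L^p$-input H\"older bound for large $t$ (Lemma~\ref{L:Pt_pHolder}), and a split of the $t$-integral at $\delta=d(x,y)^{d_W}$ exactly as in Lemma~\ref{L:pre-Morrey-Bessel}. The only difference is presentational: you derive the H\"older bound by writing $P_t=P_{t/2}P_{t/2}$ and taking a geometric mean of Condition~3 with the trivial sup bound, whereas the paper states that lemma via the equivalent heat-kernel difference estimate; both give the same exponent $(1-\tfrac1p)(1-\alpha_1)$.
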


We decompose the proof into several lemmas; the first is a direct consequence of the heat kernel upper bound, and the second uses the fact that $(\mathrm G_\infty)$ is equivalent to 
\[
|p_t(x,z)-p_t(y,z)|\leq C\frac{d(x,y)^{d_W(1-\alpha_1)}}{t^{1-\alpha_1+\frac{d_H}{d_W}}}
\]
for some $C>0$ and every $t>0$, $x,y,z\in X$, see~\cite[Lemma 3.4]{ABCRST3}.

\begin{lemma}\label{L:Pt_sup_bound}
Let $p \ge 1$. There exists a constant $C>0$ such that for every $f \in L^p(X,\mu)$, $t > 0$ and $\mu$ a.e. $x \in X$,
\[
| P_t f(x) | \le  \frac{C}{t^{\frac{d_H}{pd_W}}} \| f \|_{L^p(X,\mu)}.
\]
\end{lemma}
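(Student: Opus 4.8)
The plan is to deduce the bound by interpolating two elementary mapping properties of the semigroup. First I would record the on‑diagonal heat kernel estimate. Under Condition~2 the heat kernel obeys Gaussian or sub‑Gaussian bounds as in~\eqref{E:subGaussian_HKE} (with $d_W=2$ in the Gaussian regime), so in particular $p_t(x,y)\le c_3\,\mu\bigl(B(x,t^{1/d_W})\bigr)^{-1}$ for $\mu\times\mu$‑a.e.\ $(x,y)\in X\times X$ and all $t>0$. Invoking Condition~1, the $d_H$‑Ahlfors regularity of $(X,d,\mu)$ gives $\mu\bigl(B(x,t^{1/d_W})\bigr)\ge c\,t^{d_H/d_W}$, whence $\esssup_{x,y\in X}p_t(x,y)\le C\,t^{-d_H/d_W}$. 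Equivalently, $\|P_t\|_{L^1(X,\mu)\to L^\infty(X,\mu)}\le C\,t^{-d_H/d_W}$, which is exactly the claimed inequality in the endpoint case $p=1$.

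Next I would use that $\{P_t\}_{t\ge 0}$ is Markovian and conservative, so $\|P_tf\|_{L^\infty(X,\mu)}\le\|f\|_{L^\infty(X,\mu)}$, i.e.\ $\|P_t\|_{L^\infty(X,\mu)\to L^\infty(X,\mu)}\le 1$; this is the endpoint case $p=\infty$. For $1<p<\infty$, applying the Riesz--Thorin interpolation theorem to $P_t$ regarded as a map into $L^\infty$, with the two endpoints $(L^1,L^\infty)$ and $(L^\infty,L^\infty)$ and interpolation parameter $\theta=1-\tfrac1p$ (so that $\tfrac1{p_\theta}=1-\theta=\tfrac1p$ and the target exponent is again $\infty$), yields
\[
\|P_t\|_{L^p(X,\mu)\to L^\infty(X,\mu)}\le\bigl(C\,t^{-d_H/d_W}\bigr)^{1/p}\cdot 1^{\,1-1/p}=C^{1/p}\,t^{-\frac{d_H}{p\,d_W}},
\]
which is the assertion (after renaming the constant).

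There is no genuine obstacle here: the statement is essentially a packaging of the standard ultracontractivity estimate. The only point that requires care is the passage from the intrinsic volume normalization $\mu\bigl(B(x,t^{1/d_W})\bigr)$ appearing in~\eqref{E:subGaussian_HKE} to the explicit power $t^{d_H/d_W}$, which is precisely where the Ahlfors regularity of Condition~1 enters (together with the trivial observation that in the Gaussian case one simply sets $d_W=2$). If one prefers to bypass interpolation entirely, the same conclusion follows directly from H\"older's inequality, $|P_tf(x)|\le\|p_t(x,\cdot)\|_{L^q(X,\mu)}\|f\|_{L^p(X,\mu)}$ with $q$ the conjugate exponent, by estimating $\|p_t(x,\cdot)\|_{L^q}^q=\int_X p_t(x,y)^q\,d\mu(y)$ via a dyadic annular decomposition of $X$ around $x$ at scale $t^{1/d_W}$: on the $k$‑th annulus the (sub‑)Gaussian tail contributes a factor $\exp\!\bigl(-c\,2^{k d_W/(d_W-1)}\bigr)$ while Ahlfors regularity bounds its volume by $C\,2^{k d_H}t^{d_H/d_W}$, and the resulting series converges, giving $\|p_t(x,\cdot)\|_{L^q}\le C\,t^{-d_H/(p\,d_W)}$.
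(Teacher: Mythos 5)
Your proof is correct and follows exactly the route the paper intends: the paper offers no written proof beyond calling the lemma ``a direct consequence of the heat kernel upper bound,'' and your combination of the on-diagonal bound $p_t(x,y)\le C\,t^{-d_H/d_W}$ (from Conditions 1 and 2) with Markovian contractivity on $L^\infty$ and interpolation is precisely that standard ultracontractivity argument. (As a minor simplification, the H\"older route needs no dyadic decomposition: $\|p_t(x,\cdot)\|_{L^q}^q\le\|p_t(x,\cdot)\|_{L^\infty}^{q-1}\int_X p_t(x,y)\,d\mu(y)\le\bigl(C t^{-d_H/d_W}\bigr)^{q-1}$ already gives the claim.)
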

%
\begin{lemma}\label{L:Pt_pHolder}
Let $p \ge 1$. There exists a constant $C>0$ such that for every $f \in L^p(X,\mu)$, $t > 0$ and $\mu$ a.e. $x,y \in X$,
\[
| P_t f(x) -P_tf(y) | \le C \frac{d(x,y)^{d_W(1-\alpha_1) \big(1-\frac{1}{p}\big)}}{t^{\frac{d_H}{pd_W}+(1-\alpha_1)\big(1-\frac{1}{p}\big)}} \| f \|_{L^p(X,\mu)}.
\]
\end{lemma}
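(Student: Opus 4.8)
The plan is to combine two complementary estimates for $|P_tf(x)-P_tf(y)|$, both valid for $f\in L^p(X,\mu)$: one carrying no dependence on $d(x,y)$, which is efficient when $x$ and $y$ are far apart, and one which is genuinely H\"older in $d(x,y)$, efficient when they are close. For the first, Lemma~\ref{L:Pt_sup_bound} and the triangle inequality give
\[
|P_tf(x)-P_tf(y)|\le \frac{2C}{t^{d_H/(pd_W)}}\,\|f\|_{L^p(X,\mu)}=:A.
\]
For the second, I would split time via the semigroup property $P_t=P_{t/2}\circ P_{t/2}$ and set $g:=P_{t/2}f$. By Lemma~\ref{L:Pt_sup_bound}, $g\in L^\infty(X,\mu)$ with $\|g\|_{L^\infty(X,\mu)}\le C\,t^{-d_H/(pd_W)}\|f\|_{L^p(X,\mu)}$; applying Condition~3 (the estimate $(\mathrm G_\infty)$) to $g$ at time $t/2$ then yields
\[
|P_tf(x)-P_tf(y)|=|P_{t/2}g(x)-P_{t/2}g(y)|\le C\frac{d(x,y)^{d_W(1-\alpha_1)}}{t^{1-\alpha_1}}\,\|g\|_{L^\infty(X,\mu)}\le C\frac{d(x,y)^{d_W(1-\alpha_1)}}{t^{1-\alpha_1+d_H/(pd_W)}}\,\|f\|_{L^p(X,\mu)}=:B.
\]

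It then remains to interpolate between these two numerical bounds. Since $|P_tf(x)-P_tf(y)|\le\min(A,B)\le A^{1/p}B^{1-1/p}$, collecting the exponents of $d(x,y)$ and of $t$ gives exactly
\[
|P_tf(x)-P_tf(y)|\le C\,\frac{d(x,y)^{d_W(1-\alpha_1)(1-1/p)}}{t^{d_H/(pd_W)+(1-\alpha_1)(1-1/p)}}\,\|f\|_{L^p(X,\mu)},
\]
which is the claim. Equivalently one may split into cases: if $d(x,y)^{d_W}\le t$ then $d(x,y)^{d_W(1-\alpha_1)}/t^{1-\alpha_1}\le 1$ and the bound $B$ already implies the claimed inequality, while if $d(x,y)^{d_W}>t$ then $t^{1-\alpha_1}/d(x,y)^{d_W(1-\alpha_1)}<1$ and the bound $A$ does.

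The step requiring the most care is the derivation of the H\"older estimate $B$. Using only the pointwise kernel-difference bound $|p_t(x,z)-p_t(y,z)|\le C\,d(x,y)^{d_W(1-\alpha_1)}t^{-(1-\alpha_1+d_H/d_W)}$ quoted before the lemma, together with H\"older's inequality and $\int_X|p_t(x,\cdot)-p_t(y,\cdot)|\,d\mu\le 2$, one only obtains the weaker exponent $d_W(1-\alpha_1)/p$ on $d(x,y)$ (which matches the desired one solely when $p=2$); obtaining the full exponent $d_W(1-\alpha_1)$ in $B$ really does require feeding the $L^\infty$-regularized function $P_{t/2}f$ into Condition~3 as above. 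The rest is bookkeeping of exponents.
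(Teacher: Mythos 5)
Your argument is correct. The two pointwise bounds you set up are both valid: $A$ follows from Lemma~\ref{L:Pt_sup_bound} and the triangle inequality, and $B$ follows from the semigroup splitting $P_t=P_{t/2}\circ P_{t/2}$, the ultracontractive bound $\|P_{t/2}f\|_{L^\infty(X,\mu)}\le C t^{-d_H/(pd_W)}\|f\|_{L^p(X,\mu)}$, and then $(\mathrm G_\infty)$ applied at time $t/2$; the interpolation $\min(A,B)\le A^{1/p}B^{1-1/p}$ (or, equivalently, your case distinction $d(x,y)^{d_W}\lessgtr t$) reproduces exactly the stated exponents in $d(x,y)$, $t$ and $\|f\|_{L^p(X,\mu)}$, and the case $p=1$ degenerates correctly to the bound $A$.

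The paper does not print a proof of this lemma; it only indicates that the argument should use the pointwise kernel-difference estimate $|p_t(x,z)-p_t(y,z)|\le C\,d(x,y)^{d_W(1-\alpha_1)}t^{-(1-\alpha_1)-d_H/d_W}$, which is equivalent to $(\mathrm G_\infty)$. Your route works instead at the function level: regularize with $P_{t/2}$, apply $(\mathrm G_\infty)$ to the bounded function $P_{t/2}f$, and then interpolate the two numerical bounds; this is in effect a Riesz--Thorin type interpolation of the functional $f\mapsto P_tf(x)-P_tf(y)$ between $L^1$ (ultracontractivity) and $L^\infty$ (the weak Bakry--\'Emery estimate), and it covers all $p\ge 1$ at once. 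Your closing remark is essentially right, with one refinement: at the kernel level one can still reach the stated exponent when $p\le 2$ by spending only one power on the difference bound, namely $\int_X|p_t(x,z)-p_t(y,z)|^q\,d\mu(z)\le \sup_z|p_t(x,z)-p_t(y,z)|\cdot\bigl(\sup_z\,(p_t(x,z)+p_t(y,z))\bigr)^{q-2}\cdot\int_X\bigl(p_t(x,z)+p_t(y,z)\bigr)d\mu(z)$, using the on-diagonal bound $Ct^{-d_H/d_W}$ for the middle factor; this requires $q\ge 2$, i.e.\ $p\le 2$, so for $p>2$ a regularization/interpolation step like yours (rather than a single application of H\"older to the kernel difference) is indeed what makes the full exponent $d_W(1-\alpha_1)(1-1/p)$ attainable. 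In all, your proof is complete and the bookkeeping of exponents checks out.
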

%

The third lemma is more involved and we provide its proof.
\begin{lemma}\label{L:pre-Morrey-Bessel}
Let $\frac{d_H}{pd_W} < \alpha < \frac{d_H}{pd_W}+\big(1-\frac{1}{p}\big)(1-\alpha_1)$. There exists a constant $C>0$ such that for every $f \in L^2(X,\mu)$ and $\mu$-a.e. $x,y \in X$,
\[
\int_0^{+\infty} t^{\alpha-1} | P_t f (x) -P_tf(y)| dt \le Cd(x,y)^{\alpha d_W-\frac{d_H}{p}} \| f \|_{L^p(X,\mu)}.
\]
\end{lemma}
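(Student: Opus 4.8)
The plan is to split the time integral at the natural scale $t=d(x,y)^{d_W}$ and bound the two pieces using the two pointwise estimates already recorded in Lemmas~\ref{L:Pt_sup_bound} and~\ref{L:Pt_pHolder}. We may assume in addition that $f\in L^p(X,\mu)$, since otherwise the right-hand side is infinite and there is nothing to prove. Fix a point $(x,y)\in X\times X$ in the $\mu\otimes\mu$-full-measure set where both lemmas hold, and write $r:=d(x,y)$, so that the integral to estimate is $\int_0^{r^{d_W}}+\int_{r^{d_W}}^{+\infty}$.

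For the short-time part, the triangle inequality and Lemma~\ref{L:Pt_sup_bound} give $|P_tf(x)-P_tf(y)|\le 2C\,t^{-\frac{d_H}{pd_W}}\|f\|_{L^p(X,\mu)}$, whence
\[
\int_0^{r^{d_W}} t^{\alpha-1}|P_tf(x)-P_tf(y)|\,dt \le 2C\|f\|_{L^p(X,\mu)}\int_0^{r^{d_W}} t^{\alpha-1-\frac{d_H}{pd_W}}\,dt.
\]
The exponent satisfies $\alpha-1-\frac{d_H}{pd_W}>-1$ precisely because $\alpha>\frac{d_H}{pd_W}$, so the integral converges at $0$ and equals $\big(\alpha-\frac{d_H}{pd_W}\big)^{-1}r^{\,d_W\alpha-\frac{d_H}{p}}$, which is exactly the asserted order.

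For the long-time part, Lemma~\ref{L:Pt_pHolder} yields
\[
\int_{r^{d_W}}^{+\infty} t^{\alpha-1}|P_tf(x)-P_tf(y)|\,dt \le C\,r^{\,d_W(1-\alpha_1)(1-\frac1p)}\|f\|_{L^p(X,\mu)}\int_{r^{d_W}}^{+\infty} t^{\alpha-1-\frac{d_H}{pd_W}-(1-\alpha_1)(1-\frac1p)}\,dt.
\]
Convergence at $+\infty$ holds exactly because $\alpha<\frac{d_H}{pd_W}+\big(1-\frac1p\big)(1-\alpha_1)$; evaluating the integral gives, up to a constant, $r^{\,d_W\alpha-\frac{d_H}{p}-d_W(1-\alpha_1)(1-\frac1p)}$, and multiplying by the prefactor $r^{\,d_W(1-\alpha_1)(1-\frac1p)}$ cancels the last term and leaves $r^{\,d_W\alpha-\frac{d_H}{p}}$. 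Adding the two contributions yields the claimed bound.

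The argument is entirely routine; the only point worth stressing is that the two-sided restriction $\frac{d_H}{pd_W}<\alpha<\frac{d_H}{pd_W}+\big(1-\frac1p\big)(1-\alpha_1)$ is exactly what makes the short-time integral converge at $0$ and the long-time integral converge at $+\infty$, and that in each regime the power of $r=d(x,y)$ produced is automatically $d_W\alpha-\frac{d_H}{p}$, so there is no loss. I therefore do not expect any genuine obstacle beyond bookkeeping the exponents — the substantive work has already been done in establishing Lemmas~\ref{L:Pt_sup_bound} and~\ref{L:Pt_pHolder}, the latter of which packages the weak Bakry-\'Emery estimate $(\mathrm G_\infty)$ (Condition~3) together with the heat kernel upper bound via interpolation.
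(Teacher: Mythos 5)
Your proof is correct and follows essentially the same route as the paper: split the time integral at the scale $\delta=d(x,y)^{d_W}$, bound the short-time piece via Lemma~\ref{L:Pt_sup_bound} and the long-time piece via Lemma~\ref{L:Pt_pHolder}, with the two-sided restriction on $\alpha$ ensuring convergence at $0$ and at $+\infty$ and the exponents combining to $d_W\alpha-\frac{d_H}{p}$. The only (immaterial) difference is that the paper keeps the splitting point $\delta$ generic and chooses $\delta=d(x,y)^{d_W}$ at the end, whereas you fix it from the outset.
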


\begin{proof}
The idea is to split the integral into two parts,
\[
\int_0^{+\infty} t^{\alpha-1} | P_t f (x) -P_tf(y)| dt =\int_0^{\delta} t^{\alpha-1} | P_t f (x) -P_tf(y)| dt+\int_\delta^{+\infty} t^{\alpha-1} | P_t f (x) -P_tf(y)| dt,
\]
where $\delta >0$ will be chosen later. First, by Lemma~\ref{L:Pt_sup_bound} we have
\begin{align*}
\int_0^{\delta} t^{\alpha-1} | P_t f (x) -P_tf(y)| dt &\le \int_0^{\delta} t^{\alpha-1}( | P_t f (x)| +|P_tf(y)| )dt \\
&\le \int_0^{\delta} t^{\alpha-1} \frac{C}{t^{\frac{d_H}{pd_W}}} dt \| f \|_{L^p(X,\mu)} 
\le C \delta^{\alpha- \frac{d_H}{pd_W}} \| f \|_{L^p(X,\mu)}.
\end{align*}
As usual, the constant $C$ in the previous inequalities may change from line to line. 
Secondly, applying Lemma~\ref{L:Pt_pHolder} we get
\begin{align*}
\int_\delta^{+\infty} t^{\alpha-1}  | P_t f (x) -P_tf(y)| dt &\le C \int_\delta^{+\infty} t^{\alpha-1} \frac{d(x,y)^{d_W(1-\alpha_1)\big(1-\frac{1}{p}\big)}}{t^{\frac{d_H}{pd_W}+(1-\alpha_1)\big(1-\frac{1}{p}\big)}} \| f \|_{L^p(X,\mu)}dt \\
 & \le  C d(x,y)^{d_W(1-\alpha_1)\big(1-\frac{1}{p}\big)}\!\! \int_\delta^{+\infty} t^{\alpha-1- \frac{d_H}{pd_W}-(1-\alpha_1)\big(1-\frac{1}{p}\big)} dt \| f \|_{L^p(X,\mu)} \\
 &\le C  d(x,y)^{d_W(1-\alpha_1)\big(1-\frac{1}{p}\big)} \delta^{\alpha- \frac{d_H}{pd_W}-(1-\alpha_1)\big(1-\frac{1}{p}\big)} \| f \|_{L^p(X,\mu)}.
\end{align*}
Thus, one concludes
\begin{multline*}
\int_0^{+\infty} t^{\alpha-1} | P_t f (x) -P_tf(y)| dt\\
\le C \Big(\delta^{\alpha- \frac{d_H}{pd_W}}  + d(x,y)^{d_W(1-\alpha_1)\big(1-\frac{1}{p}\big)} \delta^{\alpha- \frac{d_H}{pd_W}-(1-\alpha_1)\big(1-\frac{1}{p}\big)}  \Big)\| f \|_{L^p(X,\mu)}
\end{multline*}
and choosing $\delta=d(x,y)^{d_W}$ yields the result.
\end{proof}

We are finally ready to prove Theorem~\ref{Morrey-BesselBesov}.

\begin{proof}[Proof of Theorem~\ref{Morrey-BesselBesov}]
Let $f \in {\rm dom}\,(-\Delta)^{-\alpha}$. By virtue of Lemma~\ref{L:pre-Morrey-Bessel},
\begin{multline*}
| (-\Delta)^{-\alpha} f (x) - (-\Delta)^{-\alpha} f (y) |= C \left|  \int_0^{+\infty} t^{\alpha-1} (P_tf(x)-P_tf(y))  dt \right| \\
 \le C  \int_0^{+\infty} t^{\alpha-1}  |P_tf(x) -P_tf(y) |\, dt 
 \le  Cd(x,y)^{\alpha d_W-\frac{d_H}{p}} \| f \|_{L^p(X,\mu)}.
\end{multline*}
Applying the inequality to $(-\Delta)^{\alpha}f$ instead of $f$ yields the result.
\end{proof}

As a consequence, we deduce a version of a Morrey's type inequality which is \textit{almost} optimal. In addition to Ahlfors regularity, sub-Gaussian heat kernel estimates and condition $(\mathrm G_\infty)$, it will be necessary to assume the property $({\rm PPI}_p)$.

\begin{theorem}\label{Morrey-BesselBesov2}
Let $p>1$ and $\frac{d_H}{pd_W} < \alpha_p < \frac{d_H}{pd_W}+\big(1-\frac{1}{p}\big)(1-\alpha_1)$. Assuming $(\mathrm G_\infty)$ and $({\rm PPI}_p)$, for every $0<\alpha <\alpha_p$ there exists a constant $C>0$ such that 
\[
| f(x)-f(y)| \le  Cd(x,y)^{\alpha d_W-\frac{d_H}{p}} \|  f \|^{1-\frac{\alpha}{\alpha_p}}_{L^p(X,\mu)} \mathbf{Var}_{p,\mathcal{E}}(f)^{\frac{\alpha}{\alpha_p}}
\]
for every $f \in W^{1,p}(\mathcal E)$ and $\mu$-a.e. $x,y \in X$.
\end{theorem}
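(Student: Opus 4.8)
The plan is to reduce the statement to the fractional Morrey inequality of Theorem~\ref{Morrey-BesselBesov} together with an interpolation bound
\begin{equation*}
\|(-\Delta)^\alpha f\|_{L^p(X,\mu)}\le C\,\|f\|_{L^p(X,\mu)}^{1-\alpha/\alpha_p}\,\mathbf{Var}_{p,\mathcal{E}}(f)^{\alpha/\alpha_p}\qquad(0<\alpha<\alpha_p),
\end{equation*}
extracted directly from the global pseudo-Poincar\'e inequality $(\mathrm{PPI}_p)$. It is enough to treat the range $\frac{d_H}{pd_W}<\alpha<\alpha_p$, in which the H\"older exponent $\alpha d_W-\frac{d_H}{p}$ is positive and the inequality is of interest: the endpoint $\alpha=\frac{d_H}{pd_W}$ is the $L^\infty$ Gagliardo--Nirenberg bound of Corollary~\ref{C:Gagliardo-Nirenberg:corollary2} with $s=p$ (available because the hypothesis $\frac{d_H}{pd_W}<\alpha_p$ says exactly $p\alpha_p>\beta=\frac{d_H}{d_W}$), combined with $|f(x)-f(y)|\le 2\|f\|_{L^\infty(X,\mu)}$. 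We may also assume $\mathbf{Var}_{p,\mathcal{E}}(f)>0$, since otherwise $(\mathrm{PPI}_p)$ gives $P_tf=f$ for all $t>0$, hence $f$ is $\mu$-a.e.\ constant (the sub-Gaussian lower estimate makes the heat kernel strictly positive, so the semigroup is irreducible) and the claim is trivial.

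For the interpolation bound I would use that $\{P_t\}_{t\ge0}$ is a contraction $C_0$-semigroup on $L^p(X,\mu)$, so its generator $\Delta$ admits, for $0<\alpha<1$, the subordination (Balakrishnan) representation
\begin{equation*}
(-\Delta)^\alpha f=c_\alpha\int_0^{+\infty}\bigl(f-P_tf\bigr)\,\frac{dt}{t^{1+\alpha}},\qquad c_\alpha>0,
\end{equation*}
absolute convergence of this integral in $L^p(X,\mu)$ being equivalent to $f\in{\rm dom}\,(-\Delta)^\alpha$; note that $\alpha<\alpha_p<\frac{d_H}{pd_W}+\bigl(1-\frac1p\bigr)(1-\alpha_1)\le 1$, so $\alpha<1$ indeed. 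Fix $T>0$ and split the integral at $T$. On $(0,T)$ estimate $\|f-P_tf\|_{L^p(X,\mu)}\le C_p\,t^{\alpha_p}\mathbf{Var}_{p,\mathcal{E}}(f)$ by $(\mathrm{PPI}_p)$ and integrate, the power $t^{\alpha_p-\alpha-1}$ being integrable at $0$ since $\alpha<\alpha_p$; on $(T,+\infty)$ use $\|f-P_tf\|_{L^p(X,\mu)}\le 2\|f\|_{L^p(X,\mu)}$ by contractivity and integrate, which converges since $\alpha>0$. This yields
\begin{equation*}
\|(-\Delta)^\alpha f\|_{L^p(X,\mu)}\le C\Bigl(T^{\alpha_p-\alpha}\,\mathbf{Var}_{p,\mathcal{E}}(f)+T^{-\alpha}\,\|f\|_{L^p(X,\mu)}\Bigr),
\end{equation*}
and the choice $T=\bigl(\|f\|_{L^p(X,\mu)}/\mathbf{Var}_{p,\mathcal{E}}(f)\bigr)^{1/\alpha_p}$ gives the interpolation bound; in particular the subordination integral converges, so $f\in{\rm dom}\,(-\Delta)^\alpha$ for every $f\in W^{1,p}(\mathcal{E})$.

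Finally I would apply Theorem~\ref{Morrey-BesselBesov}, which is legitimate since $\frac{d_H}{pd_W}<\alpha<\alpha_p<\frac{d_H}{pd_W}+\bigl(1-\frac1p\bigr)(1-\alpha_1)$, to conclude that for $\mu$-a.e.\ $x,y\in X$,
\begin{equation*}
|f(x)-f(y)|\le C\,d(x,y)^{\alpha d_W-\frac{d_H}{p}}\,\|(-\Delta)^\alpha f\|_{L^p(X,\mu)}\le C\,d(x,y)^{\alpha d_W-\frac{d_H}{p}}\,\|f\|_{L^p(X,\mu)}^{1-\alpha/\alpha_p}\,\mathbf{Var}_{p,\mathcal{E}}(f)^{\alpha/\alpha_p},
\end{equation*}
which is the assertion. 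The only genuinely delicate point is the functional-calculus bookkeeping on $L^p$ --- justifying the subordination formula and identifying ${\rm dom}\,(-\Delta)^\alpha$ with convergence of that integral --- but this is classical for contraction $C_0$-semigroups; everything else is a single-parameter optimization, and I anticipate no further obstacle beyond checking that $\alpha$ lies in the admissible ranges of both ingredients and that all the constants involved are finite.
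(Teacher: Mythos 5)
Your proposal is correct and follows essentially the same route as the paper: represent $(-\Delta)^\alpha f$ via the Balakrishnan/subordination integral $\int_0^\infty t^{-\alpha-1}(P_tf-f)\,dt$, split it at a parameter, bound the small-time part by $(\mathrm{PPI}_p)$ and the large-time part by $L^p$-contractivity, optimize the splitting parameter to get the interpolation bound, and then invoke Theorem~\ref{Morrey-BesselBesov}. Your extra bookkeeping (restricting to $\alpha>\frac{d_H}{pd_W}$ where Theorem~\ref{Morrey-BesselBesov} applies, and dismissing the degenerate case $\mathbf{Var}_{p,\mathcal{E}}(f)=0$) is a slight refinement of the same argument, not a different one.
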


\begin{proof}
Let $f \in W^{1,p}(\mathcal E)$. For $\delta >0$, applying $({\rm PPI}_p)$ one has
\begin{align*}
 \left\| \int_0^\infty t^{-s-1} (P_t f - f)\ dt \right\|_{L^p(X,\mu)} & \le  \int_0^\infty t^{-s-1}  \| P_t f - f \|_{L^p(X,\mu)}  dt \\
  & \le \int_0^\delta t^{-s-1}  \| P_t f - f \|_{L^p(X,\mu)}  dt +  \int_\delta^\infty t^{-s-1}  \| P_t f - f \|_{L^p(X,\mu)}  dt  \\
  & \le \mathbf{Var}_{p,\mathcal{E}}(f) \int_0^\delta t^{-s-1 +\alpha_p}   dt +2 \| f \|_{L^p(X,\mu)} \int_\delta^\infty t^{-s-1}    dt \\
  & \le \mathbf{Var}_{p,\mathcal{E}}(f) \frac{\delta^{\alpha-s}}{\alpha_p -s}+2 \| f \|_{L^p(X,\mu)} \frac{\delta^{-s}}{s}.
\end{align*}
Finally, since
\[
\| (-\Delta)^\alpha f \|_{L^p(X,\mu)}=C\left\| \int_0^\infty t^{-\alpha-1} (P_t f - f)\ dt \right\|_{L^p(X,\mu)},
\]
the result follows from Theorem~\ref{Morrey-BesselBesov} by optimizing in $\delta$.
\end{proof}

\subsection{Examples}

As an illustration of the more concrete regularity results that can be obtained from the Morrey's inequality in Theorem~\ref{T:Morrey_local}, in this paragraph we apply that result to several settings covered by the general theory. In addition, we propose new conjectures for fractals in the case $p>1$. As we already mentioned, Morrey's inequality is specially interesting at the critical exponent $\alpha_p$, since it provides the (H\"older) regularity of the functions in the Sobolev space $W^{1,p}(\mathcal E)$.
Recall that we define the Sobolev continuity exponent of a Dirichlet form as
\[
\delta_\mathcal{E}=\inf \{ p \ge 1, \, W^{1,p}(\mathcal{E}) \subset C^0(X) \}.
\]

\subsubsection*{Strictly local Dirichlet spaces}
In the framework described in Section~\ref{sec:mmsdoubling}, we know from Theorem~\ref{T:BV2}(ii) that under the quasi Bakry-\'Emery condition~\eqref{E:sBE_BV2}, the local Besov semi-norm $\|f\|_{\alpha_p,p,R}$ is equivalent to the $L^p$-norm of the gradient and $\alpha_p=1/2$ for any $p\geq 2$. Hence, Theorem~\ref{T:Morrey_local} recovers the classical Morrey inequality.

\begin{theorem}\label{T:continuity_BV2}
Let $(X,d,\mu)$ be a metric measure space that satisfies the volume doubling property and supports a 2-Poincar\'e inequality. Moreover, assume that it satisfies the quasi Bakry-\'Emery condition~\eqref{E:sBE_BV2}. Then, for any $p>d_H$, there exists $C>0$ such that
\begin{equation*}
\sup_{0< d(x,y) \le R }\frac{|f(x)-f(y)|}{d(x,y)^{1-\frac{d_H}{p}}}\leq C \||\nabla f|\|_{L^p(X,\mu)}.
\end{equation*}
In particular $\delta_\mathcal{E} \le d_H$.
\end{theorem}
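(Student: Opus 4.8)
The plan is to obtain this statement as a direct specialization of the local Morrey inequality of Theorem~\ref{T:Morrey_local} to the pair $(p,\alpha_p)$, combined with the identification of the relevant semi-norms furnished by Theorem~\ref{T:BV2}(ii). First I would note that in the framework of Section~\ref{sec:mmsdoubling} the volume doubling property together with the $2$-Poincar\'e inequality is equivalent, by Sturm's results~\cite{Stu95,Stu96}, to the existence of a heat kernel with two-sided Gaussian estimates; hence the walk dimension is $d_W=2$ and Condition~2 of this section holds, while Condition~1 ($d_H$-Ahlfors regularity) is a standing assumption throughout Section~\ref{morrey}. Moreover, under the quasi Bakry-\'Emery condition~\eqref{E:sBE_BV2}, Theorem~\ref{T:BV2}(ii) gives $\alpha_p=\tfrac12$ for every $p>1$, the identification $W^{1,p}(\mathcal{E})=W^{1,p}(X)$, and the norm equivalence $\|f\|_{p,1/2,R}\simeq\big(\int_X|\nabla f|^p\,d\mu\big)^{1/p}$ with constants that may be taken independent of $R$.

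With these facts in hand, I would apply Theorem~\ref{T:Morrey_local} with $\alpha=\alpha_p=\tfrac12$ and $d_W=2$. The admissibility requirement $p>\tfrac{d_H}{d_W\alpha}$ becomes precisely $p>d_H$, and the H\"older exponent is $\lambda=d_W\alpha-\tfrac{d_H}{p}=1-\tfrac{d_H}{p}$. Since the constant in Theorem~\ref{T:Morrey_local} does not depend on $R$, replacing $R$ by $3R$ to match the two supremum ranges yields, for every $f\in W^{1,p}(\mathcal{E})=W^{1,p}(X)$,
\[
\mu\text{-ess}\sup\limits_{0<d(x,y)\le R}\frac{|f(x)-f(y)|}{d(x,y)^{1-d_H/p}}\ \le\ C\,\|f\|_{p,1/2,3R}\ \le\ C'\Big(\int_X|\nabla f|^p\,d\mu\Big)^{1/p},
\]
where the last step invokes the equivalence from Theorem~\ref{T:BV2}(ii). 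As $C'$ is again independent of $R$, one may even let $R\to+\infty$; for strictly local spaces the essential supremum above coincides with the supremum taken over the continuous representative, giving exactly the claimed inequality.

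It then remains to read off $\delta_\mathcal{E}\le d_H$. Fix any $p>d_H$. The inequality just established shows that every $f\in W^{1,p}(\mathcal{E})$ admits a $(1-\tfrac{d_H}{p})$-H\"older continuous representative, and the inclusion $\mathbf{B}^{p,1/2}(X)\subset L^\infty(X,\mu)$ recorded at the end of the proof of Theorem~\ref{T:Morrey_local} shows this representative is bounded; hence $W^{1,p}(\mathcal{E})\subset C^0(X)$. Taking the infimum over all $p>d_H$ gives $\delta_\mathcal{E}\le d_H$. I do not anticipate a genuine obstacle: the substantive work is already carried out in Theorems~\ref{T:Morrey_local} and~\ref{T:BV2}(ii), and the only point deserving care is to verify that the hypotheses of Theorem~\ref{T:Morrey_local}---notably $d_W=2$, the Ahlfors regularity, and the Korevaar--Schoen characterization on which its proof rests---are all in force in the present doubling, $2$-Poincar\'e, quasi Bakry-\'Emery setting.
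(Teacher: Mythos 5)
Your proposal is correct and matches the paper's own route: the paper states Theorem~\ref{T:continuity_BV2} as an immediate consequence of Theorem~\ref{T:BV2}(ii) (which gives $\alpha_p=\tfrac12$ and $\|f\|_{p,1/2,R}\simeq\||\nabla f|\|_{L^p(X,\mu)}$ under the quasi Bakry-\'Emery condition) combined with the Morrey inequality of Theorem~\ref{T:Morrey_local} applied with $d_W=2$ and $\alpha=\alpha_p=\tfrac12$, exactly as you do, including the verification that $p>d_H$ corresponds to $p>\tfrac{d_H}{d_W\alpha_p}$ and $\lambda=1-\tfrac{d_H}{p}$. Your added care about the $R$ versus $R/3$ ranges, the standing Ahlfors regularity assumption of the section, and the boundedness of the representative are consistent with the paper's intent.
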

%
%
\subsubsection*{Nested fractals}
Currently, dealing with strongly local Dirichlet spaces with sub-Gaussian heat kernel estimates is more delicate due to the lack of an analogue to the quasi Bakry-\'Emery condition~\eqref{E:sBE_BV2}. Nevertheless, we would like to discuss several conjectures for nested fractals and the Sierpinski carpet that arise in the light of those presented in~\cite{ABCRST3}. 
In view of recent developments, specially in the fractal setting~\cite[Section 19]{Kig18}, it seems that the exponent $\delta_{\mathcal{E}}$ may be related to the so-called Ahlfors regular conformal dimension of the space. We leave this question open for possible future research.

\medskip

We start with the case of the Vicsek set discussed in Section~\ref{subS:fractals}, which is our best understood fractal model so far. In the next theorem, $X$ thus denotes this particular set.

\begin{theorem}\label{T:Vicsek continuity}
For the Vicsek set, $\delta_\mathcal{E}=1$. Moreover, $W^{1,p}(\mathcal E) \subset C^{1-1/p}(X)$ for any $p>1$.
\end{theorem}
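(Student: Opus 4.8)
The plan is to obtain both statements at once from the Morrey-type inequality of Theorem~\ref{T:Morrey_local}, applied at the critical pair $(p,\alpha_p)$, once the explicit value of $\alpha_p$ on the Vicsek set is inserted; after that everything is elementary arithmetic in $d_H$ and $d_W$. First I would record the structural input: the Vicsek set is $d_H$-Ahlfors regular and carries a heat kernel with sub-Gaussian estimates, so Conditions~1 and~2 of Section~\ref{morrey} hold and Theorem~\ref{T:Morrey_local} may be used; moreover $d_W=1+d_H$, that is $d_W-d_H=1$, by~\cite[Theorem~8.18]{Bar98}, and $\alpha_1=d_H/d_W$, so $1-\alpha_1=\tfrac{d_W-d_H}{d_W}=\tfrac1{d_W}$.

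Then I would insert $\alpha_p$. By Theorem~\ref{T:Vicsek1} for $1\le p\le 2$ and Remark~\ref{R:Vicsek2} for $p\ge 2$, one has for every $p\ge 1$
\[
\alpha_p=\Big(1-\tfrac2p\Big)(1-\alpha_1)+\tfrac1p=\tfrac1{d_W}+\tfrac1p\cdot\tfrac{d_W-2}{d_W},
\qquad\text{hence}\qquad p\,\alpha_p\,d_W=p+d_W-2 .
\]
For $p>1$ this gives $p\,\alpha_p\,d_W=p+d_W-2>d_W-1=d_H$, i.e.\ $\alpha_p\,p>d_H/d_W$, which is precisely the hypothesis of Theorem~\ref{T:Morrey_local} with $\alpha=\alpha_p$. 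That theorem then yields $W^{1,p}(\mathcal E)=\mathbf{B}^{p,\alpha_p}(X)\subset C^\lambda(X)$ with H\"older exponent
\[
\lambda=d_W\alpha_p-\tfrac{d_H}{p}=1+\tfrac{d_W-2}{p}-\tfrac{d_W-1}{p}=1-\tfrac1p ,
\]
using $d_H=d_W-1$ once more; this proves the ``moreover'' assertion $W^{1,p}(\mathcal E)\subset C^{1-1/p}(X)$ for all $p>1$.

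For the value of $\delta_\mathcal E$, the inclusion just established combined with $C^{1-1/p}(X)\subset C^0(X)$ shows $W^{1,p}(\mathcal E)\subset C^0(X)$ for every $p>1$, so $\delta_\mathcal E\le 1$; the reverse bound $\delta_\mathcal E\ge 1$ is automatic since the infimum defining $\delta_\mathcal E$ is taken over $p\ge 1$. Hence $\delta_\mathcal E=1$. I do not anticipate a genuine obstacle here: the argument is bookkeeping on top of results already in hand. The two points that need a line of justification are that the formula for $\alpha_p$ when $p>2$ relies on Remark~\ref{R:Vicsek2} (whose proof is deferred to~\cite{ABCRST5}), and that the standing Ahlfors-regularity and (sub-)Gaussian heat-kernel assumptions of Section~\ref{morrey} genuinely hold for the Vicsek set — which they do, it being a fractional metric space equipped with a fractional diffusion in Barlow's sense.
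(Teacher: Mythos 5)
Your argument is correct and follows essentially the same route as the paper: apply Theorem~\ref{T:Morrey_local} at the pair $(p,\alpha_p)$ and use $d_W-d_H=1$ to get the exponent $1-\frac1p$ and the threshold $p>1$. The only (minor) difference is that for $p>2$ you invoke the exact equality $\alpha_p=\beta_p$ from Remark~\ref{R:Vicsek2}, whose proof is deferred to~\cite{ABCRST5}, whereas the paper avoids this dependence by using only the lower bound $\alpha_p\ge\beta_p$ (already available for all $p\ge1$ from the nontriviality statement in Theorem~\ref{T:Vicsek1}), which suffices both to verify $p\,\alpha_p>\frac{d_H}{d_W}$ and to conclude $\lambda=d_W\alpha_p-\frac{d_H}{p}\ge 1-\frac1p$.
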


\begin{proof}
The condition for the possible ranges of $p$ is obtained as follows.  Recall from Theorem~\ref{T:Morrey_local} that we look for the infimum of the $p$'s such that $\frac{d_H}{p}<d_W\alpha_p$. For Vicsek set, we know from Theorem \ref{T:Vicsek1} and~\cite[Theorem 3.11]{ABCRST3}  that we always have 
{\small
\[
d_W\alpha_p \ge d_W\Big(1-\frac{d_H}{d_W}\Big)\Big(1-\frac{2}{p}\Big)+\frac{d_W}{p}=(d_W-d_H)\Big(1-\frac{2}{p}\Big)+\frac{d_W}{p}=\frac{(d_W-d_H)(p-2)+d_W}{p}.
\]
}
Thus, the condition for $p$ becomes
\[
d_H<(d_W-d_H)(p-2)+d_W
\]
which is equivalent to 
$p>1$. Theorem~\ref{T:Morrey_local} also yields $W^{1,p}(\mathcal E) \subset C^\lambda (X)$ with
\[
\lambda=d_W\alpha_p-\frac{d_H}{p} \ge (d_W-d_H)\Big(1-\frac{1}{p}\Big) =1-\frac{1}{p},
\]
where the last equality follows from the fact that on the Vicsek set $d_W-d_H=1$.
\end{proof}

For a generic nested fractal $X$ we can provide bounds for the critical exponent $\delta_\mathcal{E}$.

\begin{theorem}\label{T:continuity_nested}
On nested fractals, $1 \le \delta_\mathcal{E} \le \frac{2 d_H}{d_W}$. Moreover, $W^{1,p}(\mathcal E) \subset C^{ \lambda }(X)$ for any $p\ge 2$ with
\begin{equation*}
\lambda=(d_W-d_H)\Big(1-\frac{1}{p}\Big).
\end{equation*}
\end{theorem}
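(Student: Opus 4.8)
The plan is to invoke the metric Morrey inequality of Theorem~\ref{T:Morrey_local} at the pair $(p,\alpha_p)$; the substance of the argument is, in each range of $p$, to exhibit a lower bound for $\alpha_p$ guaranteeing the condition $\alpha_p p>\tfrac{d_H}{d_W}$ under which that theorem applies. By Theorem~\ref{T:nested_fractals}, a compact or infinite nested fractal with $1\le d_H<d_W$ is $d_H$-Ahlfors regular, has a (sub-)Gaussian heat kernel as in~\eqref{E:subGaussian_HKE}, and satisfies $(\mathrm G_\infty)$ with $\alpha_1=d_H/d_W$; thus Conditions~1 and~2 of this section hold and Theorem~\ref{T:Morrey_local} is available. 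Recall also that $\alpha_2=\tfrac12$ (Theorem~\ref{T:BV3-p=2}) and that in the situation of interest $2d_H/d_W\ge1$, i.e.\ $d_W\le 2d_H$, so that $\alpha_1=d_H/d_W\ge\tfrac12=\alpha_2$.

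\emph{The range $p\ge2$, and the ``Moreover'' assertion.} Interpolating $(\mathrm G_\infty)$ with $(\mathrm G_2)$ as in Remark~\ref{R:wBE_to_Gq} gives, for every $p\ge2$ and $f\in L^p(X,\mu)$, the bound $\|P_tf\|_{p,\beta_p}\le C_p t^{-\beta_p}\|f\|_{L^p(X,\mu)}$ with $\beta_p=\big(1-\tfrac2p\big)(1-\alpha_1)+\tfrac1p$; taking $f$ non-constant and $t$ small enough that $P_tf$ is still non-constant (strong continuity of $P_t$ in $L^p$) produces a non-constant element of $\mathbf B^{p,\beta_p}(X)$, whence $\alpha_p\ge\beta_p$. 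Since $\beta_p\,p=(p-2)(1-\alpha_1)+1>\alpha_1=d_H/d_W$ for every $p>1$ (using $\alpha_1<1$), we get $\alpha_p p>d_H/d_W$, so Theorem~\ref{T:Morrey_local} applied with $\alpha=\alpha_p$ gives $W^{1,p}(\mathcal E)=\mathbf B^{p,\alpha_p}(X)\subset C^{\lambda}(X)$ with $\lambda=d_W\alpha_p-\tfrac{d_H}{p}\ge d_W\beta_p-\tfrac{d_H}{p}=(d_W-d_H)\big(1-\tfrac1p\big)$. As a larger H\"older exponent defines a smaller class (locally, which suffices here), this yields $W^{1,p}(\mathcal E)\subset C^{(d_W-d_H)(1-1/p)}(X)$ for all $p\ge2$, proving the ``Moreover'' part and, in particular, $\delta_{\mathcal E}\le2$.

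\emph{The range $2d_H/d_W<p<2$.} Here I would establish $\alpha_p\ge\tfrac12$. On a compact nested fractal this follows from the inclusion $\mathbf B^{2,1/2}(X)\subset\mathbf B^{p,1/2}(X)$ for $1\le p\le2$: by Jensen's inequality applied to the probability kernel $p_t(y,\cdot)$ (conservativeness) and then H\"older's inequality in $y$ (using $\mu(X)<\infty$),
\[
\int_X P_t\big(|f-f(y)|^p\big)(y)\,d\mu(y)\le \mu(X)^{1-p/2}\Big(\int_X P_t\big(|f-f(y)|^2\big)(y)\,d\mu(y)\Big)^{p/2},
\]
hence $\|f\|_{p,1/2}\le\mu(X)^{1/p-1/2}\|f\|_{2,1/2}$; since $\mathbf B^{2,1/2}(X)=\mathcal F$ contains non-constant functions, so does $\mathbf B^{p,1/2}(X)$, giving $\alpha_p\ge\tfrac12$. (Equivalently, this is the monotonicity of $p\mapsto\alpha_p$ on $[1,2]$; for an infinite nested fractal the exponents $\alpha_p$ agree with those of the compact model by the scale invariance of~\eqref{E:subGaussian_HKE}, so the same bound holds.) Then $\alpha_p p\ge p/2>d_H/d_W$ for $p>2d_H/d_W$, and Theorem~\ref{T:Morrey_local} with $\alpha=\alpha_p$ gives $W^{1,p}(\mathcal E)\subset C^{d_W\alpha_p-d_H/p}(X)\subset C^0(X)$.

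Combining the two ranges, $W^{1,p}(\mathcal E)\subset C^0(X)$ for every $p>2d_H/d_W$, so $\delta_{\mathcal E}\le 2d_H/d_W$; the bound $\delta_{\mathcal E}\ge1$ is immediate from the definition. The step I expect to be most delicate is the sub-critical range $2d_H/d_W<p<2$: unlike for $p\ge2$, no interpolation identity delivers a sharp lower bound for $\alpha_p$, so one must use the embedding $\mathbf B^{2,1/2}(X)\subset\mathbf B^{p,1/2}(X)$, and for the unbounded blow-ups---where $\mu(X)=\infty$ and this global $L^p$ comparison breaks down---one has to transfer the value of $\alpha_p$ from the compact model.
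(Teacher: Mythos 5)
Your overall route is exactly the paper's: feed lower bounds for $\alpha_p$ into the Morrey inequality of Theorem~\ref{T:Morrey_local} at the pair $(p,\alpha_p)$, as in the proof of Theorem~\ref{T:Vicsek continuity}. The only structural difference is that the paper simply quotes \cite[Theorem 3.11]{ABCRST3} for both bounds ($\alpha_p\ge \tfrac12$ for $1\le p\le 2$, and $\alpha_p\ge (1-\tfrac{d_H}{d_W})(1-\tfrac2p)+\tfrac1p$ for $p\ge 2$), whereas you re-derive them inside the paper's framework. Your $p\ge 2$ derivation is sound: by Theorem~\ref{T:nested_fractals} nested fractals satisfy $(\mathrm G_\infty)$ with $\alpha_1=d_H/d_W$, Remark~\ref{R:wBE_to_Gq} then puts $P_tf$ in $\mathbf B^{p,\beta_p}(X)$ for every $f\in L^p$, and choosing $f$ non-constant with $t$ small gives a non-constant element, hence $\alpha_p\ge\beta_p$; the arithmetic $d_W\beta_p-\tfrac{d_H}{p}=(d_W-d_H)\bigl(1-\tfrac1p\bigr)$ together with the (correct) observation that for bounded functions a larger H\"older exponent implies the smaller class yields the ``Moreover'' statement and $\delta_{\mathcal E}\le 2$.

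The genuine gap is in the range $2d_H/d_W<p<2$ for \emph{infinite} nested fractals, which the theorem (and Theorem~\ref{T:nested_fractals}) is meant to cover. Your Jensen--H\"older comparison $\mathbf B^{2,1/2}(X)\subset\mathbf B^{p,1/2}(X)$ genuinely uses $\mu(X)<\infty$, and the assertion that $\alpha_p$ of the blow-up ``agrees with the compact model by scale invariance'' is exactly the unproved step; the paper sidesteps it by citing the companion result. The gap is repairable without that citation: to get $\alpha_p\ge\tfrac12$ one only needs a single non-constant element of $\mathbf B^{p,1/2}(X)$. Take $f\in\mathcal F\cap L^\infty(X,\mu)$ non-constant with compact support $K$ (such functions exist by regularity of the form). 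The integrand $|f(x)-f(y)|^p$ vanishes unless $x\in K$ or $y\in K$, so by symmetry of $p_t$ and H\"older's inequality with exponent $2/p$ on the region $\{y\in K\}$, whose $p_t(y,dx)\,d\mu(y)$-mass equals $\mu(K)$ by conservativeness,
\begin{equation}
\int_X P_t\bigl(|f-f(y)|^p\bigr)(y)\,d\mu(y)\le 2\,\mu(K)^{1-\frac p2}\Bigl(\int_X\int_X|f(x)-f(y)|^2p_t(y,dx)\,d\mu(y)\Bigr)^{\frac p2}\le 2\,\mu(K)^{1-\frac p2}\bigl(2t\,\mathcal E(f,f)\bigr)^{\frac p2},
\end{equation}
so $\|f\|_{p,1/2}<\infty$ and $\alpha_p\ge\tfrac12$ also when $\mu(X)=\infty$. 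With that replacement (or simply citing \cite[Theorem 3.11]{ABCRST3} as the paper does) your argument is complete.
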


\begin{proof}
From~\cite[Theorem 3.11]{ABCRST3}, we know that $\alpha_p \ge \frac{1}{2}$ for $1 \le p \le 2$ and 
\[
\alpha_p \ge \Big(1-\frac{d_H}{d_W}\Big)\Big(1-\frac{2}{p}\Big)+\frac{d_W}{p}
\]
for $p \ge 2$. 
The result now follows as in the proof of Theorem~\ref{T:Vicsek continuity}.
\end{proof}

%

%

Since it is conjectured in~\cite[Section 5]{ABCRST3} that on all nested fractals one has for every $p \ge 1$,
\begin{equation*}
\alpha_p=\Big(1-\frac{d_H}{d_W}\Big)\Big(1-\frac{2}{p}\Big)+\frac{1}{p},
\end{equation*}
we can actually state the following more precise conjecture.

\begin{conjecture}
On nested fractals, $\delta_\mathcal{E}=1$ and for any $p>1$, there exists $C>0$ such that such that
\begin{equation*}
\mu\text{-ess}\sup\limits_{x\neq y}\frac{|f(x)-f(y)|}{d(x,y)^\lambda} \leq C \mathbf{Var}_{p,\mathcal{E}}(f)
\end{equation*}
 for every $f \in W^{1,p}(\mathcal E)$ with
\begin{equation*}
\lambda=(d_W-d_H)\Big(1-\frac{1}{p}\Big).
\end{equation*}
In particular for the Sierpinski gasket, 
$\lambda=\frac{\log(5/3)}{\log 2}\big(1-\frac{1}{p}\big)$ and for the Vicsek set, $\lambda=1-\frac{1}{p}$.
\end{conjecture}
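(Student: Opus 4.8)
We outline a conditional strategy to establish the conjecture. It necessarily rests on the (still open) identification of the critical exponent
\[
\alpha_p=\Big(1-\frac{d_H}{d_W}\Big)\Big(1-\frac{2}{p}\Big)+\frac{1}{p},\qquad p\ge1,
\]
for nested fractals, and, when $p>2$, on the validity of $(\mathrm {PPI}_p)$. Granting the value of $\alpha_p$, the exponent bookkeeping is exactly that of the proofs of Theorems~\ref{T:Vicsek continuity} and~\ref{T:continuity_nested}: one computes $d_W\alpha_p\,p=(d_W-d_H)(p-2)+d_W$, so that the Morrey threshold $p\alpha_p>d_H/d_W$ of Theorem~\ref{T:Morrey_local} is equivalent to $p>1$ (using $d_W>d_H$), and the corresponding H\"older exponent is
\[
\lambda=d_W\alpha_p-\frac{d_H}{p}=(d_W-d_H)\Big(1-\frac1p\Big),
\]
which specializes to $\frac{\log(5/3)}{\log2}\big(1-\frac1p\big)$ for the Sierpi\'nski gasket and to $1-\frac1p$ for the Vicsek set, where $d_W-d_H=1$ (cf.~\cite[Theorem 8.18]{Bar98}). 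Since nested fractals are Ahlfors regular and carry (sub-)Gaussian heat kernel estimates, Conditions~1 and~2 of Section~\ref{morrey} are in force, so $W^{1,p}(\mathcal E)\subset C^\lambda(X)\subset C^0(X)$ for every $p>1$, whence $\delta_\mathcal E=1$.

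The new analytic content is the replacement of the Besov semi-norm $\|f\|_{p,\alpha_p}$ by the $p$-variation $\mathbf{Var}_{p,\mathcal E}(f)$ on the right-hand side. Since $W^{1,p}(\mathcal E)=\mathbf B^{p,\alpha_p}(X)$, the global metric Morrey inequality (Corollary~\ref{T:Morrey_global} applied to the pair $(p,\alpha_p)$) already yields
\[
\mu\text{-ess}\sup_{x\ne y}\frac{|f(x)-f(y)|}{d(x,y)^\lambda}\le C\,\|f\|_{p,\alpha_p},\qquad f\in W^{1,p}(\mathcal E),
\]
so it suffices to prove that $\|f\|_{p,\alpha_p}\le C\,\mathbf{Var}_{p,\mathcal E}(f)$ on nested fractals (the reverse bound being trivial). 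For this I would adapt to general $p$ the argument used for $p=1$ in the proof of Theorem~\ref{T:BV3}: starting from $(\mathrm {PPI}_p)$ — which, granting the value of $\alpha_p$, holds for $1\le p\le2$ via Remark~\ref{R:wBE_to_Gq}, Proposition~\ref{P:Lp_pseudoPI} and Theorem~\ref{T:nested_fractals} — one upgrades the $\liminf$ defining $\mathbf{Var}_{p,\mathcal E}$ to a $\sup$ over $t>0$, using the cut-off estimate of Lemma~\ref{L:local_norm_approx:a} together with the Bakry--Coulhon--Ledoux--Saloff-Coste machinery of~\cite{BCLS95}, thereby obtaining
\[
\sup_{t>0}\ t^{-\alpha_p}\Big(\int_X P_t\big(|f-f(x)|^p\big)(x)\,d\mu(x)\Big)^{1/p}\le C\,\mathbf{Var}_{p,\mathcal E}(f).
\]
Combining the last two displays gives the conjectured inequality with the stated $\lambda$. (An alternative would be to push $\alpha\uparrow\alpha_p$ in Theorem~\ref{Morrey-BesselBesov2}, but at $\alpha=\alpha_p$ the relevant time integral becomes logarithmically divergent, so that route requires an additional refinement.)

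The principal obstacles are precisely the two conditional inputs. First, the value of $\alpha_p$ for nested fractals is only conjectured: it is known for $1\le p\le2$ (Theorems~\ref{T:nested_fractals} and~\ref{T:Vicsek1}) and partially for $p\ge2$ in the Vicsek case (Remark~\ref{R:Vicsek2}), but determining the critical Besov exponent on fractals like the Sierpi\'nski gasket is a genuinely hard problem. Second, for $p>2$ one needs $(\mathrm {PPI}_p)$, which is open on nested fractals: the interpolation argument of Remark~\ref{R:wBE_to_Gq} delivers $(\mathrm G_q)$ only for $q\ge2$, hence $(\mathrm {PPI}_p)$ only for $p\le2$. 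Once these inputs are secured, the scheme above — the metric Morrey inequality composed with the self-improvement $\|f\|_{p,\alpha_p}\simeq\mathbf{Var}_{p,\mathcal E}(f)$ — closes the argument, the remaining work being the (routine but not entirely automatic) transfer of the Theorem~\ref{T:BV3} argument from $p=1$ to general $p$.
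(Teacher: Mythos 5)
The statement you are asked about is a \emph{conjecture}: the paper offers no proof of it, only the heuristic that motivates it, namely the conjectured value $\alpha_p=\big(1-\frac{d_H}{d_W}\big)\big(1-\frac{2}{p}\big)+\frac{1}{p}$ from~\cite[Section 5]{ABCRST3} fed into the exponent bookkeeping of Theorem~\ref{T:Morrey_local} (exactly as in the proofs of Theorems~\ref{T:Vicsek continuity} and~\ref{T:continuity_nested}). Your arithmetic reproduces that motivation correctly: $p\alpha_p d_W=(d_W-d_H)(p-2)+d_W>d_H$ is equivalent to $p>1$ since $d_W>d_H$, and $\lambda=d_W\alpha_p-\frac{d_H}{p}=(d_W-d_H)\big(1-\frac{1}{p}\big)$, with the stated specializations for the gasket and the Vicsek set. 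You are also right that, granting the conjectured $\alpha_p$, the chain Remark~\ref{R:wBE_to_Gq} $\Rightarrow$ $(\mathrm G_q)$ for $q\ge 2$ $\Rightarrow$ Proposition~\ref{P:Lp_pseudoPI} gives $(\mathrm{PPI}_p)$ for $1<p\le 2$, and that $(\mathrm{PPI}_p)$ for $p>2$ and the value of $\alpha_p$ itself are the open inputs. So, to the extent the paper ``argues'' for the conjecture at all, your outline matches it.

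What your proposal adds beyond the paper's heuristic --- the claim that the conjectured inequality with $\mathbf{Var}_{p,\mathcal E}(f)$ on the right would follow by proving $\|f\|_{p,\alpha_p}\le C\,\mathbf{Var}_{p,\mathcal E}(f)$ and then invoking Corollary~\ref{T:Morrey_global} --- is structurally sensible, but the mechanism you suggest for that upgrade is not adequate as described. Lemma~\ref{L:local_norm_approx:a} and the machinery of~\cite{BCLS95} produce Gagliardo--Nirenberg and Trudinger--Moser type inequalities from a pseudo-Poincar\'e inequality; they do not convert the $\liminf$ in $\mathbf{Var}_{p,\mathcal E}$ into the $\sup$ defining $\|\cdot\|_{p,\alpha_p}$. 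In the $p=1$ case this self-improvement (Theorem~\ref{T:BV3}, via~\cite[Lemma 4.12]{ABCRST3}) rests essentially on the weak Bakry--\'Emery estimate $(\mathrm G_\infty)$, i.e.\ on a pointwise regularization of $P_t$ in $L^\infty$; the analogous step for general $p$ would require an $L^q$ regularization estimate of the precise strength $(\mathrm G_q)$ with exponent $1-\alpha_p$, which beyond $p\le 2$ is exactly what is unknown on nested fractals. Calling this transfer ``routine but not entirely automatic'' understates that it is, together with the value of $\alpha_p$ and $(\mathrm{PPI}_p)$ for $p>2$, part of why the statement remains a conjecture rather than a theorem; your parenthetical remark that the alternative route through Theorem~\ref{Morrey-BesselBesov2} fails at $\alpha=\alpha_p$ is correct and reflects the same obstruction. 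In short: the exponent computation and the conditional framework are right and consistent with the paper, but the proposal does not close (and with the cited tools cannot close) the genuinely open analytic steps, as you partly acknowledge.
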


The Sierpinski carpet is of different nature and it has been conjectured in~\cite[Conjecture 5.4]{ABCRST3} that $\alpha_1=(d_H-d_{tH}+1)/d_W$ and
\[
\alpha_p = \Big(1-\frac{2}{p} \Big) (1-\alpha_1)+\frac{1}{p}
\]
for $p >1$, where $d_{tH}$ is the topological Hausdorff dimension of the carpet. After some elementary computations, this yields the following conjecture.

\begin{conjecture}
For the Sierpinski carpet, $\delta_\mathcal{E}=2-\frac{d_W-d_H}{d_W-d_H+d_{tH}-1}$ and for any $p>\delta_\mathcal{E}$, there exists $C>0$ such that 
\begin{equation*}
\mu\text{-ess}\sup\limits_{x\neq y}\frac{|f(x)-f(y)|}{d(x,y)^\lambda} \leq C \mathbf{Var}_{p,\mathcal{E}}(f)
\end{equation*}
for every $f \in W^{1,p}(\mathcal E)$ with
\begin{equation*}
\lambda=\frac{(d_W-d_H+d_{tH}-1)(p-2)+d_W}{p}-\frac{d_H}{p}.
\end{equation*}
\end{conjecture}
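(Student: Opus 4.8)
The plan is to deduce the statement from the conjectured values of the critical exponents $\alpha_p$ on the Sierpinski carpet together with the Morrey-type inequalities of Theorems~\ref{T:Morrey_local} and~\ref{Morrey-BesselBesov2}. One starts from the conjecture of~\cite[Conjecture 5.4]{ABCRST3}, namely $\alpha_1=(d_H-d_{tH}+1)/d_W$ and $\alpha_p=\big(1-\tfrac{2}{p}\big)(1-\alpha_1)+\tfrac{1}{p}$ for $p>1$. Setting $A:=d_W-d_H+d_{tH}-1$ one has $1-\alpha_1=A/d_W$, hence $d_W\alpha_p=\tfrac{(p-2)A+d_W}{p}$ and $d_W\alpha_p-\tfrac{d_H}{p}=\tfrac{(p-2)A+d_W-d_H}{p}$, which is exactly the H\"older exponent $\lambda$ appearing in the statement.

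First I would pin down $\delta_{\mathcal E}$. By Theorem~\ref{T:Morrey_local} applied at $\alpha=\alpha_p$, one gets $W^{1,p}(\mathcal E)=\mathbf{B}^{p,\alpha_p}(X)\subset C^\lambda(X)\subset C^0(X)$ as soon as $p\alpha_p>d_H/d_W$, that is $(p-2)A+d_W>d_H$. Since for the carpet $d_{tH}=1+\log2/\log3>1$, the quantity $A$ is strictly positive, so this is equivalent to $p>2-\tfrac{d_W-d_H}{A}$; using $d_H=3\log2/\log3$ one checks that this threshold equals $1+\tfrac{\log2}{d_W\log3-2\log2}$, which shows $\delta_{\mathcal E}\le 1+\tfrac{\log2}{d_W\log3-2\log2}$. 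For the reverse inequality one would, in the spirit of the proof of Theorem~\ref{T:Vicsek continuity}, construct for each $p$ below the threshold an explicit function in $W^{1,p}(\mathcal E)$ with no continuous representative, again relying on the conjectured expression for $\alpha_p$.

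To get $\mathbf{Var}_{p,\mathcal E}(f)$ on the right-hand side instead of the Besov semi-norm $\|f\|_{p,\alpha_p}$ produced by Theorem~\ref{T:Morrey_local}, I would invoke Theorem~\ref{Morrey-BesselBesov2}. Its structural hypotheses ($d_H$-Ahlfors regularity and (sub-)Gaussian heat kernel estimates) hold for the carpet, whereas $(\mathrm G_\infty)$ and $(\mathrm{PPI}_p)$, $p>1$, would have to be assumed. Its admissibility window $\tfrac{d_H}{pd_W}<\alpha_p<\tfrac{d_H}{pd_W}+\big(1-\tfrac{1}{p}\big)(1-\alpha_1)$ reduces, after the same algebra, to $p>\delta_{\mathcal E}$ together with $\alpha_1<d_H/d_W$, the latter again equivalent to $d_{tH}>1$. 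For every $\alpha<\alpha_p$ the theorem then gives the inequality with exponent $\alpha d_W-\tfrac{d_H}{p}$ and an interpolated factor $\|f\|_{L^p(X,\mu)}^{1-\alpha/\alpha_p}\mathbf{Var}_{p,\mathcal E}(f)^{\alpha/\alpha_p}$; the clean endpoint statement would be recovered either by letting $\alpha\uparrow\alpha_p$ or, more robustly, by combining Corollary~\ref{T:Morrey_global} at $\alpha=\alpha_p$ with a Korevaar-Schoen characterization of $W^{1,p}(\mathcal E)$ as in Theorem~\ref{W1p:I} (so that $\|f\|_{p,\alpha_p}\le C\,\mathbf{Var}_{p,\mathcal E}(f)$ on mean-zero functions, using compactness of the carpet and a Poincar\'e inequality to absorb the $L^p$ term).

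The main obstacle is that the whole argument is conditional on several open facts: the very value of $\alpha_p$ for the carpet is conjectural, and $(\mathrm G_\infty)$ and $(\mathrm{PPI}_p)$ are not known there for all $p>1$. A more technical difficulty is the critical endpoint $\alpha=\alpha_p$ in the variational Morrey inequality, since the constant in Theorem~\ref{Morrey-BesselBesov2} degenerates as $\alpha\uparrow\alpha_p$; obtaining $\mathbf{Var}_{p,\mathcal E}(f)$ rather than the interpolated product on the right at the critical exponent therefore requires either sharpening that theorem up to the endpoint or establishing the equivalence $\|f\|_{p,\alpha_p}\simeq\mathbf{Var}_{p,\mathcal E}(f)$ for the carpet, which is itself a substantial task.
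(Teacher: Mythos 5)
This statement is a conjecture in the paper, not a theorem: the paper gives no proof, only the ``elementary computations'' that plug the conjectured values $\alpha_1=(d_H-d_{tH}+1)/d_W$ and $\alpha_p=\bigl(1-\tfrac{2}{p}\bigr)(1-\alpha_1)+\tfrac{1}{p}$ from \cite{ABCRST3} into the Morrey threshold and exponent of Theorem~\ref{T:Morrey_local}, which is exactly the computation you carry out to obtain $\delta_\mathcal{E}$ and $\lambda$. Your proposal therefore follows the same route as the paper, and your list of obstacles (the conjectural $\alpha_p$, the unverified $(\mathrm{G}_\infty)$ and $(\mathrm{PPI}_p)$ for the carpet, and the gap between the Besov semi-norm of Theorem~\ref{T:Morrey_local} and the variational right-hand side, which Theorem~\ref{Morrey-BesselBesov2} only reaches below the endpoint $\alpha_p$) correctly explains why the paper states it as a conjecture rather than a result.
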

Since for the Sierpinski carpet it is known that $d_H=\frac{\log 8}{\log 3}=\frac{3\log 2}{\log 3}$ and $d_{tH}=1+\frac{\log 2}{\log 3}$, $d_W\approx 2.097$, this gives $d_W-d_H+d_{tH}-1=d_W-\frac{2\log 2}{\log 3}$. The critical exponents thus read
\[
\delta_\mathcal{E}=1+\frac{\log 2}{d_W\log 3-2\log 2}
\]
and
\[
\lambda=\frac{(d_W\log 3-2\log 2)(p-2)+d_W\log 3-3\log 2}{p\log 3}
=d_W\Big(1-\frac{1}{p}\Big)-\frac{\log 2}{\log 3}\Big(2-\frac{1}{p}\Big).
\]

\subsection*{Acknowledgments}
The first author thanks A. Teplyaev for his valuable input in the discussion of the Vicsek set.
\bibliographystyle{amsplain}
\bibliography{Gagliardo_Refs}

\noindent
P. Alonso Ruiz: \texttt{paruiz@math.tamu.edu}  \\
Department of Mathematics,
Texas A{\&}M University,
College Station, TX 77843

\

\noindent

F. Baudoin: \texttt{fabrice.baudoin@uconn.edu}
\\
Department of Mathematics,
University of Connecticut,
Storrs, CT 06269

\end{document}